\documentclass[sn-mathphys-num]{sn-jnl}

\usepackage{graphicx}%
\usepackage{multirow}%
\usepackage{amsmath,amssymb,amsfonts,dsfont}%
\usepackage{amsthm}%
\usepackage{mathrsfs}%
\usepackage[title]{appendix}%
\usepackage{xcolor}%
\usepackage{textcomp}%
\usepackage{manyfoot}%
\usepackage{booktabs}%
\usepackage{algorithm}%
\usepackage{algorithmicx}%
\usepackage{algpseudocode}%
\usepackage{listings}%

\theoremstyle{thmstyleone}%
\newtheorem{theorem}{Theorem}
\newtheorem{proposition}[theorem]{Proposition}%
\newtheorem{corollary}[theorem]{Corollary}%

\theoremstyle{thmstyletwo}%
\newtheorem{remark}{Remark}%

\theoremstyle{thmstylethree}%
\newtheorem{definition}{Definition}%

\begin{document}

\title[Testing Hooke-like isotropic hyper-/hypo-elastic material models under finite simple shear deformations]{Testing Hooke-like isotropic hyper-/hypo-elastic material models under finite simple shear deformations}


\author*[1]{\fnm{Sergey N.} \sur{Korobeynikov}}\email{s.n.korobeynikov@mail.ru}

\author[1]{\fnm{Alexey Yu.} \sur{Larichkin}}\email{larichking@gmail.com}

\author[2]{\fnm{Patrizio} \sur{Neff}}\email{patrizio.neff@uni-due.de}

\affil*[1]{\orgname{Lavrentyev Institute of Hydrodynamics SB RAS}, \orgaddress{\street{Lavrentyev av., 15}, \city{Novosibirsk}, \postcode{630090}, \country{Russia}}}

\affil[2]{\orgdiv{Chair for Nonlinear Analysis and Modelling, Faculty of Mathematics}, \orgname{University of Duisburg-Essen}, \orgaddress{\street{Thea-Leymann Strasse, 9}, \city{Essen}, \postcode{D-45127}, \country{Germany}}}



\abstract{We test some Hooke-like isotropic hyper-/hypo-elastic material models under finite simple shear deformations (cf., Thiel et al. Int. J. Non-linear Mech. 112: 57--72,  2019) and show that (1) the components of the Cauchy stress tensor for any Cauchy/Green isotropic elastic material under left finite simple shear (LFSS) deformation are equal to the components of the rotated Cauchy stress tensor for the same material under right finite simple shear (RFSS) deformation; (2) for any Hill's linear isotropic hyperelastic material model based on a symmetrically physical (SP) strain measure, LFSS and RFSS deformations lead to Eulerian and Lagrangian pure shear stresses, respectively; (3) for any two-power Ogden's isotropic hyperelastic material model based on a SP strain function, LFSS and RFSS deformations lead to Eulerian and Lagrangian pure shear stresses, respectively; (4) for some Hooke-like isotropic hypoelastic materials with constitutive relations based on corotational stress rates under LFSS deformation, the behavior of the Cauchy stress tensor components as a function of the shear parameter is qualitatively similar to that for the same materials under simple shear deformation. In addition, we confirm the results of Lin (Lin R.C. ZAMP, 75: 191, 2024) showing that for some Hooke-like isotropic hypoelastic materials with constitutive relations based on corotational stress rates without initial stresses under RFSS deformation, the Cauchy stress tensor components coincide with those for the Hencky isotropic hyperelastic material.}

\keywords{finite simple shear, isotropic materials, hyperelasticity, hypoelasticity}


\pacs[MSC Classification]{74B20}

\maketitle

\section{Introduction}
\label{sec:1}

We consider a homogeneous deformation (no matter now whether it is infinitesimal or finite) of a sample of an isotropic material for which the Cauchy stress tensor components are expressed in a Cartesian coordinate system with orthonormal basis vectors $\mathbf{k}_1$, $\mathbf{k}_2$, and $\mathbf{k}_3$ and in matrix form as follows:
\begin{equation}\label{1-1}
   \boldsymbol{\sigma}=\sigma_{12}\mathbf{k}_1\otimes \mathbf{k}_2 + \sigma_{12}\mathbf{k}_2\otimes \mathbf{k}_1\quad \Leftrightarrow \quad
   \boldsymbol{\sigma} = \left[                                                                                                                                                                   \begin{array}{ccc}
      0           & \sigma_{12} & 0 \\
      \sigma_{12} & 0           & 0 \\
      0           & 0           & 0 \\
   \end{array}
   \right].
\end{equation}
The eigenvalues (principal stresses) and subordinate orthonormal eigenvectors of the stress tensor $\boldsymbol{\sigma}$ have the following form ($s\equiv \sigma_{12}=\sigma_{21}$):
\begin{equation*}
  \sigma_1=s,\quad  \sigma_2=-s,\quad \sigma_3=0,\quad \mathbf{n}_1=\frac{\sqrt{2}}{2}\left[
                                                                      \begin{array}{c}
                                                                        1 \\
                                                                        1 \\
                                                                        0 \\
                                                                      \end{array}
                                                                    \right],\quad
                                            \mathbf{n}_2=\frac{\sqrt{2}}{2}\left[
                                                                      \begin{array}{c}
                                                                       -1 \\
                                                                        1 \\
                                                                        0 \\
                                                                      \end{array}
                                                                    \right],\quad
                                            \mathbf{n}_3=\left[
                                                                      \begin{array}{c}
                                                                       0 \\
                                                                       0 \\
                                                                       1 \\
                                                                      \end{array}
                                                                    \right].
\end{equation*}
This stress tensor state is called the \emph{pure shear stress} state (see Fig. \ref{f1} for illustration).
\begin{figure}
\begin{center}
\includegraphics{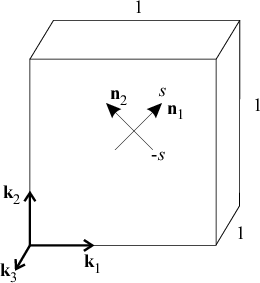}
\end{center}
\caption{Pure shear stress state in the homogeneous deformable sample in the current configuration.}
\label{f1}
\end{figure}

For infinitesimal deformations of linear isotropic elastic materials, the Cauchy stress tensor \eqref{1-1} corresponds to isochoric deformation ($\text{tr}\,\boldsymbol{\varepsilon}=0$)\footnote{As noted in \cite{ThielIJNLM2019},  this condition is volume preserving only to first-order accuracy.} with the infinitesimal strain tensor $\boldsymbol{\varepsilon}$ given by the formula \eqref{1-1} in which the component $\sigma_{12}$ of the tensor $\boldsymbol{\sigma}$ is replaced by the component $\varepsilon_{12}$ of the tensor $\boldsymbol{\varepsilon}$. Deformation of this type (\emph{pure shear stretch}) is illustrated in Fig. \ref{f2},\emph{a}
\begin{figure}
\begin{center}
\includegraphics{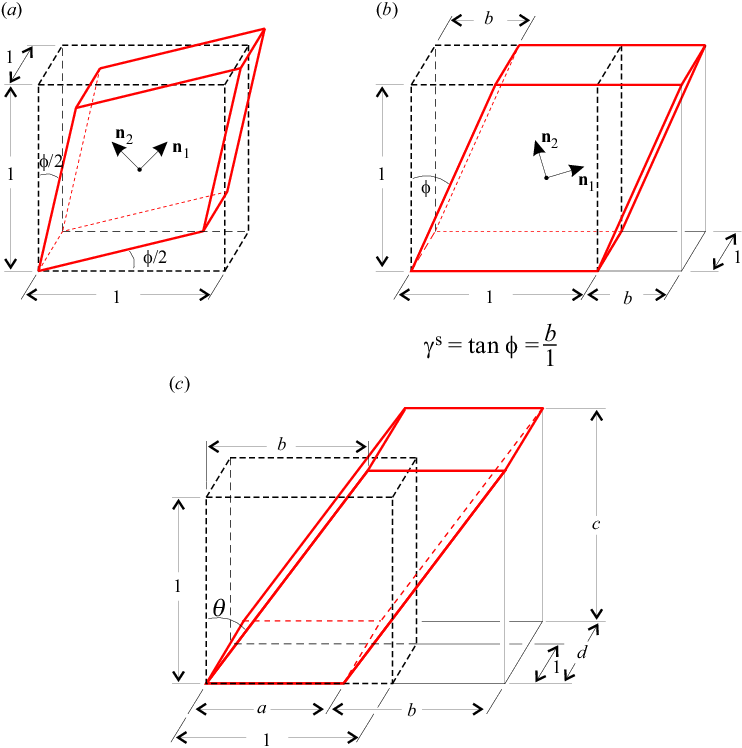}
\end{center}
\caption{Different types of shear deformation: (\emph{a}) pure shear stretch for infinitesimal strains; (\emph{b}) simple shear; (\emph{c}) pure shear stretch for finite strains.}
\label{f2}
\end{figure}
The constitutive relations for this material reduce to the form
\begin{equation}\label{1-3}
  \sigma_{12}=2 \mu\, \varepsilon_{12},
\end{equation}
where $\mu>0$ is the shear modulus.

Rivlin \cite{RivlinPRSLA1948} (see also \cite{BoulangerARMA2000,DestradeIJNLM2012,DienesAM1979,DienesAM1987,HorganJElast2010,GambirasioACME2016,KorobeynikovAAM2020,LinIJNME2002,LinZAMM2003,LinEJMAS2003,LiuJElast1999,MihaiPRSA2011,MihaiIJNLM2013,Prager1961,SzaboIJSS1989,XiaoZAMM2006}) introduced the definition of \emph{simple shear} deformation with kinematics suitable for studying the shear deformation of solids with the law of motion $\mathbf{x}(\mathbf{X})$ of the form (see Fig. \ref{f2},\emph{b})
\begin{equation}\label{1-4}
    x_1(t)=X_1+\gamma(t)X_2,\quad\quad x_2(t)=X_2,\quad\quad x_3(t)=X_3\quad\quad (t\in [t_0,\infty]).
\end{equation}
Here $X_i$ and $x_i$ ($i=1,2,3$) are the Cartesian coordinates of any material point of the solid in the reference and current configurations, respectively, and the shear  parameter $\gamma(t)$ is a given monotonically increasing function of time $t$ ($\gamma(t_0)=0$). We are interested in the evolution of the Cauchy stress tensor components $\sigma_{11}$, $\sigma_{22}$, and $\sigma_{12}$ as a function of the parameter $\gamma(t)$ which characterizes the amount of shear. Many researchers have used the law of motion \eqref{1-4} to test different materials. For infinitesimal deformations, the laws of motion presented in Figs. \ref{f2},\emph{a} and \ref{f2},\emph{b} are equivalent, i.e., the law of motion \eqref{1-4} for isotropic solids in linear elasticity leads to constitutive relations \eqref{1-3} \cite{ThielIJNLM2019}, since the laws of motion in Figs. \ref{f2},\emph{a} and \ref{f2},\emph{b} differ only by rotation and the equations of linear elasticity neglect rotations. Therefore, in linear elasticity theory, the law of motion \eqref{1-4} for isotropic solids corresponds to a pure shear stress of the form \eqref{1-1}. However, in finite (Cauchy/Green) elasticity theory, the simple shear deformation is defined by the universal relation \cite{RivlinPRSLA1948}
\begin{equation*}
  \gamma\, \sigma_{12}=\sigma_{11} -\sigma_{22},
\end{equation*}
whence a simple shear deformation cannot lead to a pure shear stress state for any Cauchy/Green isotropic elastic materials.

Moon and Truesdell \cite{MoonARMA1974} asked the question: is it possible in the case of deformation with the law of motion (see Fig. \ref{f2},\emph{c})
\begin{equation}\label{1-6}
    x_1(t)=a(t)X_1+b(t)X_2,\quad x_2(t)=c(t)X_2,\quad x_3(t)=d(t)X_3\quad t\in [t_0,\infty]
\end{equation}
to find expressions for $a$, $b$, $c$, and $d$ such that Cauchy/Green isotropic elastic solids experience a pure shear stress state of the form \eqref{1-1}? It has been shown \cite{MoonARMA1974} (see also \cite{BelikJElast1998,BoulangerJElast2004,DestradeIJNLM2012,HayesJElast2008,HorganJElast2023a,LinZAMP2024,MihaiPRSA2011,MihaiIJNLM2013,MurphyIJNLM2022,RubinJElast1988,RubinJElast2020,ThielIJNLM2019,TingJElast2006}) that a necessary condition for this is the satisfaction of the equality
\begin{equation}\label{1-7}
  a^2 + b^2 = c^2.
\end{equation}
In addition, it has been shown \cite{MihaiPRSA2011,MoonARMA1974} that, when constitutive relations are subject to an additional constraint in the form of the Baker--Ericksen inequalities, equality \eqref{1-7} is also a sufficient condition for the existence of the pure shear stress state \eqref{1-1} for samples of the indicated materials under homogeneous deformation with the law of motion \eqref{1-6} for some values of the parameters $a$, $b$, $c$, and $d$. Furthermore, those studies have shown that the deformation of a sample with the law of motion \eqref{1-6} corresponds to a simple shear superimposed on a triaxial stretch. In this case, the deformation may not be isochoric, i.e., the Kelvin effect may occur resulting in a volume change of the sample in the pure shear stress with an escape from the plane $(X_1,X_2)$, i.e., for $d\neq 1$.

Thiel et al. \cite{ThielIJNLM2019} argued that the single constraint \eqref{1-7} on the quantities $a$, $b$, $c$, and $d$ greatly distances the law of motion \eqref{1-6} from the law of motion \eqref{1-4} for the simple shear deformation that in isotropic linear elasticity  leads to the simple shear stress state \eqref{1-1}. In addition to \eqref{1-7}, they proposed two additional constraints on the law of motion \eqref{1-6}: $d(t)= 1$ (i.e., the requirement of plane strain deformation) and $J\,(\equiv \det\partial \mathbf{x}/\partial \mathbf{X})=1$ (i.e., the requirement of isochoric deformation). These authors defined homogeneous deformations corresponding to the law of motion \eqref{1-6} with the above-mentioned constraints on the quantities $a$, $b$, $c$, and $d$ as the \emph{left finite simple shear} (\emph{LFSS}) deformations. In addition, they introduced another type of homogeneous deformation corresponding to the law of motion \eqref{1-6} with the constraints $d(t)= 1$ and $J=1$ on $a$, $b$, $c$, and $d$, but replaced the constraint \eqref{1-7} by the equality
\begin{equation}\label{1-8}
  a^2 = b^2 + c^2,
\end{equation}
and called this kind of deformation corresponding to the law of motion \eqref{1-6} with the above three constraints on $a$, $b$, $c$, and $d$ the \emph{right finite simple shear} (\emph{RFSS}) deformation.

Thiel et al. \cite{ThielIJNLM2019} have demonstrated that LFSS deformations correspond to pure shear stress not for all Cauchy/Green elastic isotropic materials. In particular, they have shown that, for neo-Hookean isotropic hyperelastic materials, LFSS deformations do not correspond to pure shear stress. This result is consistent with the statement of Horgan and Murphy \cite{HorganJElast2023a} (p. 88) that for any one-power isotropic hyperelastic Ogden's model (in particular, the neo-Hookean hyperelastic isotropic material model), except for the Hencky model, a simple shear stress corresponds to a deformation with the law of motion \eqref{1-6} subject to \eqref{1-7} only for $d>1$. Thiel et al. \cite{ThielIJNLM2019} have also shown that there exist Cauchy/Green elastic isotropic materials for which LFSS and RFSS deformations correspond to pure shear stresses (for RFSS deformations, the Eulerian Cauchy stress tensor is taken to be its rotated counterpart). They have shown, in particular, that these materials include the Hencky isotropic hyperelastic material (see also Eq. (3.9) in \cite{HorganJElast2023a}).

LFSS and RFSS deformations are of course suitable for testing not only Cauchy/Green elastic material models, but also all other material models. For example, Lin \cite{LinZAMP2024} have tested some Hooke-like isotropic hypoelastic material models based on corotational stress rates under RFSS deformation and found that this kind of deformation leads to the same Lagrangian pure stress state for all these models.

The present study has the following main objectives:
\begin{enumerate}
  \item to test two one-power Ogden's material models under LFSS/RFSS deformations (one of these models coincides with the neo-Hookean model for incompressible materials \cite{Bertram2021} and with the Simo--Pister model for compressible materials \cite{SimoCMAME1984}, see also \cite{Korobeynikov2026});
  \item to identify a subfamily of Hill's linear isotropic hyperelastic (HLIH) material models \cite{Hill1979} (see also \cite{KorobeynikovJElast2019,KorobeynikovIJSS2022,KorobeynikovMTDM2024}) for which LFSS and RFSS deformations correspond to Eulerian and Lagrangian pure stress states, respectively;
  \item to identify a subfamily of two-power Ogden's isotropic hyperelastic material models \cite{OgdenPRSLA1972a,OgdenPRSLA1972b} (see also \cite{KorobeynikovAAM2023,KorobeynikovAAM2025}) for which LFSS and RFSS deformations correspond to Eulerian and Lagrangian pure stress states, respectively;
  \item to test some Hooke-like isotropic hypoelastic material models based on corotational stress rates under LFSS/RFSS deformation.
\end{enumerate}

The first main new result of this study is the identification of subfamilies of two well-known (Hill's and Ogden's) families of isotropic hyperelasticity models for which LFSS and RFSS deformations correspond to Eulerian and Lagrangian pure stress states, respectively. The second result is testing some well-known Hooke-like isotropic hypoelastic material models not only under RFSS deformation, as was done in \cite{LinZAMP2024}, but also under LFSS deformation, which is a new result in the testing of such material models under homogeneous deformations. Incidentally, we note that our analytical solutions are obtained in a simpler way than in \cite{LinZAMP2024}. In addition, we consider some misleading formulations of constitutive relations for material models in \cite{LinZAMP2024}. Other new results of this work include finding spurious oscillations of stresses \emph{under LFSS deformation} for the Hooke-like (grade zero) hypoelastic material model (with constant fourth order stiffness tensor) based on the corotational Zaremba--Jaumann stress rate (which is consistent with the similar behavior of the solution for the simple shear problem, see, e.g., \cite{KorobeynikovAAM2020,Korobeynikov2023,KorobeynikovZAMM2024}), the  absence of such oscillations for the Hooke-like (grade zero) hypoelastic material model based on the corotational logarithmic stress rate without initial stresses, and the presence of spurious oscillations of stresses for this model under LFSS deformation with non-zero initial .stresses (which is consistent with the similar behavior of stresses for this material model under simple shear deformation, see, e.g., \cite{LiuJElast1999,KorobeynikovZAMM2024}). Note also that for greater reliability of the results, we confirmed some analytical solutions of the problems under consideration by computer simulations of the tested material models implemented in both the commercial MSC.Marc \cite{MarcA2015} nonlinear FE system and the homemade Pioner \cite{Korobeinikov1989} FE code.

\section{Preliminaries}
\label{sec:2}

This section provides a background for the formulation of constitutive relations for hyper-/hypo-elastic materials. Operations on tensors and the spectral representation of a symmetric second-order tensor are presented in Section \ref{sec:2-1}. Section \ref{sec:2-2} provides formulations of local body deformations and basic kinematics that will be used in Section \ref{sec:2-3} to derive constitutive relations for Hooke-like isotropic hyper-/hypo-elastic material models.

\subsection{Operations on tensors and spectral representation of a symmetric second-order tensor}
\label{sec:2-1}

We define the second-order tensors $\mathbf{H}\in \mathcal{T}^2$ (hereinafter, $\mathcal{T}^2$ denotes the set of all second-order tensors). Hereinafter, $\mathcal{T}^2_{\text{sym}},\  \mathcal{T}^2_{\text{skew}}\subset \mathcal{T}^2$ denote the sets of all symmetric and skew-symmetric second-order tensors, respectively; $\text{sym}\,\mathbf{A}\equiv (\mathbf{A}+\mathbf{A}^T)/2\in \mathcal{T}^2_{\text{sym}}$ and $\text{skew}\,\mathbf{A}\equiv (\mathbf{A}-\mathbf{A}^T)/2\in \mathcal{T}^2_{\text{skew}}$ denote the symmetric and skew-symmetric components, respectively, of the tensor $\mathbf{A}\in \mathcal{T}^2$.

Let $\mathbf{A},\mathbf{H}\in \mathcal{T}^2$. We define the \emph{double inner product} (\emph{double contraction}) operation of tensors:
\begin{equation*}
    \mathbf{A}:\mathbf{H}=\mathbf{H}:\mathbf{A} \equiv \text{tr}(\mathbf{A}\cdot\mathbf{H}^T)=  \text{tr}(\mathbf{A}^T\cdot\mathbf{H})=
    \text{tr}(\mathbf{H}\cdot\mathbf{A}^T)=\text{tr}(\mathbf{H}^T\cdot\mathbf{A}).
\end{equation*}
Hereinafter, the superscript $T$ denotes the transposition of a tensor, and the dot between vectors and/or tensors denotes their inner product.

Let $\mathbf{S}\in \mathcal{T}^2_{\text{sym}}$. This tensor can be represented in the classical spectral form
\begin{equation*}
\mathbf{S}=\sum_{k=1}^{3} s_k\, \mathbf{s}_k\otimes\mathbf{s}_k,
\end{equation*}
where $s_k\in \mathds{R}$ are eigenvalues (hereinafter, the subscript $k$ runs over the values 1, 2, and 3), $\{\mathbf{s}_k\}$ ($k=1,2,3$) is the triad of the corresponding subordinate right-oriented orthonormal eigenvectors of the tensor $\mathbf{S}$, and the symbol $\otimes$ denotes the dyadic product of vectors. For multiple eigenvalues $s_k$, the corresponding eigenvectors $\mathbf{s}_k$ are defined ambiguously. This ambiguity can be circumvented by representing the tensor $\mathbf{S}$ in terms of \emph{eigenprojections} (see, e.g., \cite{Bertram2021,Hashiguchi2013,Itskov2019,KorobeynikovAM2011,KorobeynikovAM2023,LuehrCMAME1990,XiaoIJSS1995}) as
\begin{equation*}
    \mathbf{S}=\sum^m_{i=1}s_i\, \mathbf{S}_i.
\end{equation*}
Here $s_i$ are all different $m$\footnote{The number $m$ ($1\leq m\leq 3$) will be called the \emph{eigenindex}.} eigenvalues of the tensor $\mathbf{S}$ and $\mathbf{S}_i$ are the subordinate \emph{eigenprojections} (hereinafter, the subscripts run from 1 to $m$). Hereinafter, we introduce the eigenprojections of the tensor $\mathbf{S}$ using Sylvester's formula
\begin{equation}\label{2-3}
    \mathbf{S}_i=
    \begin{cases}
    \prod^m\limits_{\substack{j=1\\ i\neq j}}  \frac{\mathbf{S}-s_j\mathbf{I}}{s_i-s_j} & \text{if}\ m=2,3 \\
    \mathbf{I} & \text{if}\ m=1
   \end{cases}\ ,
\end{equation}
hereinafter, $\mathbf{I}$ denotes the second order identity tensor. The eigenprojections have the following properties (see, e.g., \cite{LuehrCMAME1990}):
\begin{equation*}
   \mathbf{S}_i \cdot \mathbf{S}_j=
   \begin{cases}
    \mathbf{S}_i, & \text{if}\ i=j, \\
    \mathbf{O}, & \text{if}\ i\neq j,
   \end{cases}
\quad\quad \sum^m_{i=1}\mathbf{S}_i=\mathbf{I}, \quad \text{tr}\,\mathbf{S}_i=m_i.
\end{equation*}
Here $m_i$ denotes the multiplicity of an eigenvalue $s_i$ and $\mathbf{O}\in \mathcal{T}^2$ denotes the zero second-order tensor. The definition of the eigenprojections \eqref{2-3} implies their symmetry, i.e., $\mathbf{S}_i\in \mathcal{T}^2_{\text{sym}}$ ($\mathbf{S}_i^T=\mathbf{S}_i$).\\

\subsection{Local body deformations and basic kinematics}
\label{sec:2-2}

Consider the motion of a body $\mathfrak{B}$ in a three-dimensional Euclidean point space, and let $\mathbf{X}$ and $\mathbf{x}$ be the position vectors of some particle $P\in \mathfrak{B}$ in the \emph{reference} (fixed at time $t_0$) and \emph{current} (moving at time $t$) configurations, respectively. Let $\mathbf{F}\equiv \text{Grad}\,\mathbf{x}= \partial \mathbf{x}/\partial \mathbf{X}\in \mathcal{T}^2$ ($J\equiv\det \mathbf{F} >0$) be the \emph{deformation gradient}. The tensor $\mathbf{F}$ can be uniquely represented as (see, e.g., \cite{Bertram2021,Itskov2019,NeffIJES2014,Ogden1984,Truesdell1965})
\begin{equation}\label{2-5}
\mathbf{F} = \mathbf{R}\cdot\mathbf{U} = \mathbf{V}\cdot\mathbf{R} \quad (\mathbf{U}^T=\mathbf{U},\quad \mathbf{V}^T=\mathbf{V},\quad \mathbf{R}\cdot\mathbf{R}^T=\mathbf{I},\quad \det\mathbf{R}=1),
\end{equation}
where $\mathbf{U}=\sqrt{\mathbf{F}^T \mathbf{F}},\ \mathbf{V}=\sqrt{\mathbf{F}\mathbf{F}^T}\in\mathcal{T}_{\text{sym}}^2$ are the \emph{right} (Lagrangian) and \emph{left} (Eulerian) positive definite \emph{stretch tensors}, respectively, and $\mathbf{R}\in \mathcal{T}^{2\,+}_\text{orth}$ is the \emph{rotation tensor}.\footnote{Hereinafter, the subset $\mathcal{T}^{2\,+}_\text{orth}\subset \mathcal{T}^2$ denotes the set of all proper orthogonal second-order tensors (i.e., the tensors $\boldsymbol{\Psi}$ such that $\boldsymbol{\Psi}\cdot\boldsymbol{\Psi}^T=\mathbf{I}$ and $\det \boldsymbol{\Psi}=1$).}

Consider the \emph{Euclidean transformations} (ETs) (cf., \cite{KorobeynikovJElast2008,Ogden1984})
\begin{equation}\label{2-6}
\mathbf{x}^{\ast}(\mathbf{X}^{\ast},t^{\ast})\equiv \mathbf{Q}(t)\cdot\mathbf{x} (\mathbf{X},t)+\mathbf{c}(t), \quad
\mathbf{X}^{\ast}\equiv\mathbf{Q}_0\cdot\mathbf{X}+\mathbf{c}_0, \quad t^{\ast}=t+a,
\end{equation}
where $\mathbf{Q}(t)\in\mathcal{T}^{2\,+}_{\text{orth}}$ is an arbitrary tensor, $\mathbf{c}(t)$ is an arbitrary vector, and $a\in\mathds{R}$ is an arbitrary variable ($\mathbf{Q}_0\equiv\mathbf{Q}(t_0)$ and $\mathbf{c}_0\equiv\mathbf{c}(t_0)$). We now give definitions of objective tensors following \cite{Holzapfel2000,KorobeynikovJElast2008,Ogden1984}.\\

\begin{definition}
\label{def:2-1}
A tensor $\mathbf{H}\in\mathcal{T}^2$ is called an \emph{objective tensor} if, under ETs \eqref{2-6}, it changes as follows:\footnote{Further, objective Eulerian and Lagrangian tensors will be called for brevity Eulerian and Lagrangian tensors.}
\begin{alignat*}{3} 
    \mathbf{H}^{\ast}(P,t) &= \mathbf{Q}(t) \cdot \mathbf{H}(P,t) \cdot \mathbf{Q}^T(t)& \quad & \text{\emph{Eulerian}}, \\
    \mathbf{H}^{\ast}(P,t) &= \mathbf{Q}_0 \cdot \mathbf{H}(P,t) \cdot \mathbf{Q}_0^T &  \quad & \text{\emph{Lagrangian}}. \notag
\end{alignat*}
\end{definition}

Let the Lagrangian and Eulerian tensors $\mathbf{U}$ and $\mathbf{V}$ have the eigenindex $m$. The spectral representations of the tensors $\mathbf{U}$ and $\mathbf{V}$ have the form
\begin{equation*}
    \mathbf{U}=\sum_{k=1}^{3} \lambda_k\,\mathbf{N}_k\otimes\mathbf{N}_k=\sum^m_{i=1}\lambda_i\, \mathbf{U}_i,\quad
    \mathbf{V}=\sum_{k=1}^{3} \lambda_k\,\mathbf{n}_k\otimes\mathbf{n}_k=\sum^m_{i=1}\lambda_i\, \mathbf{V}_i\quad (0<\lambda_i<\infty).
\end{equation*}
Hereinafter, $\lambda_k$ are eigenvalues (principal stretches) of the tensors $\mathbf{U}$ and $\mathbf{V}$, $\mathbf{N}_k$ and $\mathbf{n}_k$ ($\mathbf{n}_k=\mathbf{R}\cdot \mathbf{N}_k$) ($k=1,2,3$) are the subordinate eigenvectors (corresponding to the principal directions), and $\mathbf{U}_i$ and $\mathbf{V}_i$ ($i=1,\ldots,m$) are the subordinate eigenprojections of these tensors.

For applications, it is possible to define (see, e.g., \cite{CurnierET1991}) the \emph{right and left Cauchy--Green deformation tensors}
\begin{align}\label{2-9}
   (\mathbf{F}^T \mathbf{F}=)\,\mathbf{C}\equiv \mathbf{U}^2 = &\sum_{k=1}^{3} \lambda^2_k\,\mathbf{N}_k\otimes\mathbf{N}_k=\sum^m_{i=1}\lambda^2_i\, \mathbf{U}_i,  \\
   (\mathbf{F} \mathbf{F}^T=\,\mathbf{B}=)\, \mathbf{c}\equiv \mathbf{V}^2 = &\sum_{k=1}^{3} \lambda^2_k\,\mathbf{n}_k\otimes\mathbf{n}_k=\sum^m_{i=1}\lambda^2_i\, \mathbf{V}_i. \notag
\end{align}
and their inverse tensors
\begin{align}\label{2-10}
    \mathbf{C}^{-1} &= \mathbf{U}^{-2} = \sum_{k=1}^{3} \lambda^{-2}_k\,\mathbf{N}_k\otimes\mathbf{N}_k=\sum^m_{i=1}\lambda^{-2}_i\, \mathbf{U}_i, \\
    (\mathbf{B}^{-1} =)\,\mathbf{c}^{-1} &= \mathbf{V}^{-2} = \sum_{k=1}^{3} \lambda^{-2}_k\,\mathbf{n}_k\otimes\mathbf{n}_k=\sum^m_{i=1}\lambda^{-2}_i\, \mathbf{V}_i.  \notag
\end{align}

The simple \cite{CurnierET1991} isotropic tensor functions
\begin{align*}
\mathbf{E} &\equiv \mathbf{f}(\mathbf{U})=\sum_{k=1}^{3} f(\lambda_k)\,\mathbf{N}_k\otimes\mathbf{N}_k=\sum^m_{i=1}f(\lambda_i)\, \mathbf{U}_i, \\
\mathbf{e} &\equiv \mathbf{f}(\mathbf{V})=\sum_{k=1}^{3} f(\lambda_k)\,\mathbf{n}_k\otimes\mathbf{n}_k=\sum^m_{i=1}f(\lambda_i)\, \mathbf{V}_i, \notag \\
& f(\lambda)\in\mathds{R}:\ f(\lambda)\in C^2, \quad  f^{\prime}(\lambda)>0 \ \text{as}\ \lambda\in(0,\infty)\ \text{and}\ f(1)=0,\ f^{\prime}(1)=1 \notag
\end{align*}
form, respectively, the Lagrangian and Eulerian subfamilies of the \emph{Hill family of strain tensors} (cf.,  \cite{Hill1979,Ogden1984}).

Table \ref{t1} gives some strain tensors $\mathbf{E}$ and $\mathbf{e}$ belonging to the Hill family.
\begin{table}
\caption{Some strain tensors belonging to the Hill family}
\label{t1}
\begin{tabular}{llll}
\hline\noalign{\smallskip}
 Generating      & Objectivity & Basis-free & Strain     \\
 scale function & type        & expression & tensor name \\
\noalign{\smallskip}\hline\noalign{\smallskip}
   & Lagrangian & $\mathbf{E}^{(2)}\equiv \frac{1}{2}(\mathbf{U}^2-\mathbf{I})$   & Green--Lagrange \\
 $f^{(2)}(\lambda)\equiv \frac{1}{2}(\lambda^2-1)$  & Eulerian  & $\mathbf{e}^{(2)} \equiv \frac{1}{2}(\mathbf{V}^2-\mathbf{I})$ & Finger \\
\noalign{\smallskip}\hline\noalign{\smallskip}
   & Lagrangian & $\mathbf{E}^{(1)}\equiv \mathbf{U}-\mathbf{I}$   & right Biot \\
 $f^{(1)}(\lambda)\equiv \lambda-1 $  & Eulerian  & $\mathbf{e}^{(1)} \equiv \mathbf{V} - \mathbf{I}$   & left Biot  \\
\noalign{\smallskip}\hline\noalign{\smallskip}
   & Lagrangian & $\mathbf{E}^{(0)}\equiv \log \mathbf{U}$   & right Hencky \\
 $f^{(0)}(\lambda)\equiv \log \lambda$  & Eulerian  & $\mathbf{e}^{(0)} \equiv \log\mathbf{V}$   & left Hencky  \\
\noalign{\smallskip}\hline\noalign{\smallskip}
   & Lagrangian & $\mathbf{E}^{(-1)}\equiv \mathbf{I}-\mathbf{U}^{-1}$   & Hill \\
 $f^{(-1)}(\lambda)\equiv 1-\lambda^{-1}$  & Eulerian  & $\mathbf{e}^{(-1)} \equiv \mathbf{I}-\mathbf{V}^{-1}$   & Swainger  \\
\noalign{\smallskip}\hline\noalign{\smallskip}
   & Lagrangian & $\mathbf{E}^{(-2)}\equiv\frac{1}{2}(\mathbf{I}-\mathbf{U}^{-2})$   & Karni--Reiner \\
 $f^{(-2)}(\lambda)\equiv \frac{1}{2}(1-\lambda^{-2})$  & Eulerian  & $\mathbf{e}^{(-2)} \equiv \frac{1}{2}(\mathbf{I}-\mathbf{V}^{-2})$   & Almansi  \\
\noalign{\smallskip}\hline\noalign{\smallskip}
   & Lagrangian & $\mathbf{E}^M\equiv \frac{1}{4}(\mathbf{U}^2-\mathbf{U}^{-2}) $   & right Mooney \\
 $f^M(\lambda)\equiv \frac{1}{4}(\lambda^2-\lambda^{-2})$  & Eulerian  & $\mathbf{e}^M \equiv \frac{1}{4}(\mathbf{V}^2-\mathbf{V}^{-2})$   & left Mooney  \\
\noalign{\smallskip}\hline\noalign{\smallskip}
   & Lagrangian & $\mathbf{E}^P\equiv \frac{1}{2}(\mathbf{U}-\mathbf{U}^{-1})$   & right Pelzer \\
 $f^P(\lambda)\equiv \frac{1}{2}(\lambda-\lambda^{-1})$  & Eulerian  & $\mathbf{e}^P \equiv \frac{1}{2}(\mathbf{V}-\mathbf{V}^{-1})$   & left Pelzer  \\
\noalign{\smallskip}\hline
\end{tabular}
\end{table}
Note that all strain tensors given in Table \ref{t1} belong to the two-parameter Curnier--Rakotomanana \cite{CurnierET1991} and the two-parameter power \cite{DarijaniIJES2010,DarijaniIMechE2010} strain tensor families. The strain tensors generated by the scale functions $f^{(2)}(\lambda)$, $f^{(-2)}(\lambda)$, $f^{(1)}(\lambda)$, $f^{(-1)}(\lambda)$, and $f^{(0)}(\lambda)$ also belong to the one-parameter Doyle--Ericksen \cite{Doyle1956} strain tensor family. In addition, the Hencky, Mooney, and Pelzer strain tensors also belong to the one-parameter Ba\v{z}ant--Itskov \cite{BazantJEMT1998,Itskov2019,ItskovMRC2004} and the symmetrically physical \cite{KorobeynikovJElast2019} strain tensor families.

It can be shown that the Mooney $\mathbf{E}^M$ and $\mathbf{e}^M$ and Pelzer $\mathbf{E}^P$ and $\mathbf{e}^P$ strain tensors approximate the Hencky strain tensors $\log\mathbf{U}$ and $\log\mathbf{V}$ up to third-order terms, and the tensors $\mathbf{E}^{(2)}$, $\mathbf{E}^{(-2)}$, $\mathbf{e}^{(2)}$, and $\mathbf{e}^{(-2)}$ approximate these tensors only up to second-order terms. Plots of the scale functions generating these strain tensors versus $\lambda$ are given in Fig. \ref{f3}.
\begin{figure}
\begin{center}
\includegraphics{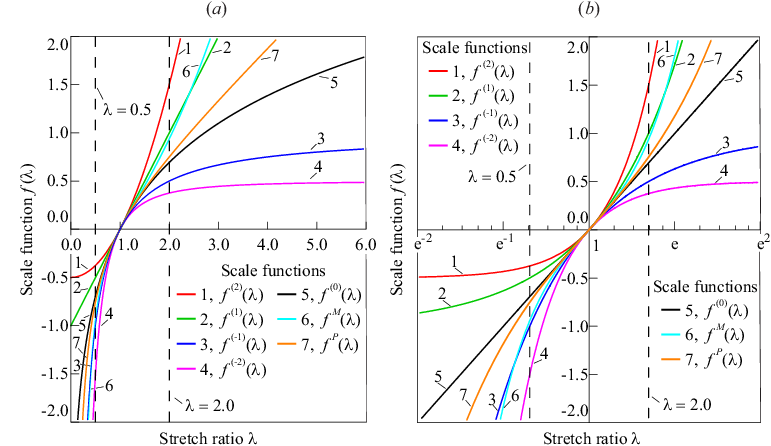}
\end{center}
\caption{Plots of the scale functions $f(\lambda)$ generating the strain tensors given in Table \ref{t1} with (\emph{a}) standard and (\emph{b}) logarithmic scales for the stretch $\lambda$.}
\label{f3}
\end{figure}

We now give expressions for two families of scale functions (both families are subfamilies of scale functions generating strain tensors from the Hill family) used in this paper:
\begin{description}
  \item[(i)] The one-parameter \emph{Doyle--Ericksen family} (with the parameter $n\in\mathds{R}$) \emph{of strain tensors} \cite{Doyle1956} is formed using a function $f(\lambda)$ of the form\footnote{Some authors call this family of strain tensors \emph{the Seth--Hill family of strain tensors} (see, e.g., \cite{CurnierET1991}).}
\begin{equation*}
f^{(n)}(\lambda) \equiv
\begin{cases}
\frac{1}{n} (\lambda^n-1), &\text{if}\  n \neq 0, \\
\log\lambda\ [=\lim \limits_{n \to 0} \frac{1}{n} (\lambda^n-1) ],  &\text{if} \  n=0.
\end{cases}
\end{equation*}
  \item[(ii)] The one-parameter \emph{Ba\v{z}ant--Itskov family} (with the parameter $r\in\mathds{R}$, $r\geq 0$) \emph{of strain tensors} \cite{BazantJEMT1998,ItskovMRC2004,Itskov2019} (see also \cite{KorobeynikovJElast2019}) is formed using a function $f(\lambda)$ of the form
\begin{equation}\label{2-13}
f_r(\lambda) \equiv
\begin{cases}
\frac{1}{2r} (\lambda^r-\lambda^{-r}), &\text{if}\  r > 0, \\
\log\lambda\ [=\lim \limits_{r \to 0} \frac{1}{2r} (\lambda^r-\lambda^{-r}) ],  &\text{if} \  r=0.
\end{cases}
\end{equation}
\end{description}
Note that strain tensors from the Ba\v{z}ant--Itskov family also belong to the Curnier--Rakotomanana \cite{CurnierET1991} and symmetrically physical (SP) \cite{KorobeynikovJElast2019} families. In addition, the scale function $\log\lambda$ generating the left and right Hencky strain tensors belongs to both families (the Doyle--Ericksen with $n=0$ and the Ba\v{z}ant--Itskov with $r=0$), and each function $f_r(\lambda)$ ($r\neq 0$) is a linear combination of two functions from the Doyle--Ericksen family\footnote{The outstanding properties of the Hencky strain tensor and the corresponding derivation of Hencky's quadratic energy purely based on differential geometric arguments are highlighted in the fundamental studies \cite{NeffJElast2015,NeffARMA2016,XiaoMMMS2005}.}
\begin{equation*}
  f_r(\lambda)= \frac{1}{2}(f^{(r)} + f^{(-r)}).
\end{equation*}
In particular the expression \eqref{2-13} for scale functions with $r=1,\,2$ generates the Pelzer and Mooney \cite{CurnierET1991} strain tensors, respectively (see Table \ref{t1}).

Hereinafter, we assume that all the tensors $\mathbf{H}\in \mathcal{T}^2$ are sufficiently smooth functions of the monotonically increasing parameter $t$ (time), and define the \emph{material time derivative} (\emph{material rate}) of the tensor $\mathbf{H}$: $\dot{\mathbf{H}}\equiv\partial \mathbf{H}/\partial t$. We introduce the \emph{velocity vector} $\mathbf{v}$, the \emph{velocity gradient} $\boldsymbol{\ell}$
\begin{equation}\label{2-15}
\mathbf{v} = \dot{\mathbf{x}},\quad\quad \boldsymbol{\ell} \equiv \text{grad}\mathbf{v}=\mathbf{r}_F\equiv \dot{\mathbf{F}}\cdot \mathbf{F}^{-1},
\end{equation}
the Eulerian \emph{stretching (strain rate) tensor} $\mathbf{d}\in \mathcal{T}^2_{\text{sym}}$, the \emph{vorticity (spin) tensor} $\mathbf{w}\in\mathcal{T}^2_{\text{skew}}$, and the \emph{polar spin tensor} $\boldsymbol{\omega}^R\in\mathcal{T}^2_{\text{skew}}$\footnote{Since we assume that $\mathbf{x}(\mathbf{X},t)\in C^2$ of  $t$, it follows that $\boldsymbol{\ell},\mathbf{d},\mathbf{w},\boldsymbol{\omega}^R \in C^0$ of $t$  (cf., \cite{ScheidlerMM1991}).}
\begin{equation}\label{2-16}
\boldsymbol{\ell} = \mathbf{d} + \mathbf{w},\quad \mathbf{d} \equiv \text{sym}\,\boldsymbol{\ell} = \frac{1}{2}(\boldsymbol{\ell} + \boldsymbol{\ell}^T),\quad \mathbf{w} \equiv \text{skew}\,\boldsymbol{\ell} = \frac{1}{2}(\boldsymbol{\ell} - \boldsymbol{\ell}^T),\quad  \boldsymbol{\omega}^R\equiv \dot{\mathbf{R}}\cdot \mathbf{R}^T.
\end{equation}

Introduce the Lagrangian tensor (cf., \cite{KorobeynikovJElast2008})
\begin{equation*}
    \mathbf{r}_U\equiv\dot{\mathbf{U}}\cdot\mathbf{U}^{-1}=\mathbf{R}^T \cdot (\mathbf{l}-\boldsymbol{\omega}^R)\cdot \mathbf{R},
\end{equation*}
and its symmetric and skew-symmetric parts
\begin{align}\label{2-18}
    \mathbf{r}_U=\widehat{\mathbf{D}}+\mathbf{W},\quad\quad \widehat{\mathbf{D}} &\equiv \text{sym}\,\mathbf{r}_U=\frac{1}{2}(\dot{\mathbf{U}} \cdot \mathbf{U}^{-1}+\mathbf{U}^{-1} \cdot \dot{\mathbf{U}}),\\
    \mathbf{W} &\equiv \text{skew}\,\mathbf{r}_U=\frac{1}{2}(\dot{\mathbf{U}} \cdot \mathbf{U}^{-1}-\mathbf{U}^{-1} \cdot \dot{\mathbf{U}}). \notag
\end{align}
We can show the validity of the relations
\begin{equation}\label{2-19}
    \mathbf{d} = \mathbf{R} \cdot \widehat{\mathbf{D}} \cdot \mathbf{R}^T,\quad\quad \mathbf{w}=\boldsymbol{\omega}^R+\mathbf{R} \cdot \mathbf{W} \cdot \mathbf{R}^T.
\end{equation}
The Lagrangian counterpart $\widehat{\mathbf{D}}$ of the Eulerian tensor $\mathbf{d}$ will be called the \emph{rotated  stretching (strain rate) tensor}, and the tensor $\mathbf{W}$ will be called the \emph{Lagrangian vorticity tensor}.

We have the following expression for the tensor $\mathbf{W}$ (see Eq. $(117)_1$ in \cite{KorobeynikovAM2011}):\footnote{Hereinafter, the notation $\sum_{i\neq j=1}^{m}$ denotes the summation over $i,j=1,\ldots, m$ and $i\neq j$ and this summation is assumed to vanish when $m=1$.}
\begin{equation*}
    \mathbf{W} = \sum_{i\neq j=1}^{m} \frac{\lambda_i-\lambda_j}{\lambda_i+\lambda_j}\mathbf{U}_i\cdot\widehat{\mathbf{D}}\cdot\mathbf{U}_j.
\end{equation*}
Substituting this expression for $\mathbf{W}$ into $\eqref{2-19}_2$, we obtain the relation between the spin tensors $\mathbf{w}$ and $\boldsymbol{\omega}^R$ (see also Eq. $(117)_2$ in \cite{KorobeynikovAM2011}):
\begin{equation*}
    \mathbf{w}=\boldsymbol{\omega}^R + \sum_{i\neq j=1}^{m} \frac{\lambda_i-\lambda_j}{\lambda_i+\lambda_j}\mathbf{V}_i\cdot\mathbf{d}\cdot\mathbf{V}_j.
\end{equation*}

Let $\mathbf{h}\in \mathcal{T}^2$ be an Eulerian tensor. We define objective Eulerian tensors that are the \emph{upper and lower Oldroyd rates} of the tensor $\mathbf{h}$ \cite{OldroydPRSLA1950,OldroydPRSLA1958} (see also \cite{AubramMMS2025,FedericoMMS2025,FialaIJNLM2016,Fiala2017,FialaZAMP2020}):
\begin{equation*}
   \left(\frac{\text{D}^{\sharp}}{\text{D}t}[\mathbf{h}] = \right) \mathbf{h}^{\sharp} \equiv \dot{\mathbf{h}}-\mathbf{h}\cdot\mathbf{l}^T - \mathbf{l}\cdot \mathbf{h},\quad\quad
   \left(\frac{\text{D}^{\flat}}{\text{D}t}[\mathbf{h}] = \right) \mathbf{h}^{\flat} \equiv \dot{\mathbf{h}}+ \mathbf{h}\cdot\mathbf{l} + \mathbf{l}^T\cdot \mathbf{h}.
\end{equation*}

We introduce \emph{the family of Eulerian corotational rates} of the Eulerian tensor $\mathbf{h}$ as tensors of the form (see, e.g., \cite{AubramMMS2025,FedericoMMS2025,FialaIJNLM2016,Fiala2017,FialaZAMP2020})
\begin{equation}\label{2-23}
    \left(\frac{\text{D}^{\circ}}{\text{D}t}[\mathbf{h}] = \right) \mathbf{h}^{\omega}\equiv \dot{\mathbf{h}} + \mathbf{h}\cdot\boldsymbol{\omega} - \boldsymbol{\omega}\cdot \mathbf{h}
\end{equation}
associated with \emph{the spin tensors} $\boldsymbol{\omega}\in \mathcal{T}^2_{\text{skew}}$ which belong to \emph{the family of continuous material spin tensors} \cite{XiaoIJSS1998,XiaoJElast1998} (see also \cite{KorobeynikovAM2011}).

Some objective Eulerian rates of Eulerian strain tensors from the Hill family given in Table \ref{t1} are related to the stretching tensor $\mathbf{d}$  by the following equalities (see, e.g., \cite{BruhnsPRSLA2004,KorobeynikovAAM2020,KorobeynikovJElast2021}):\footnote{Expressions for the spin tensor $\boldsymbol{\omega}^{\text{log}}$ associated with the logarithmic corotational tensor rate $\mathbf{h}^{\text{log}}$ of any Eulerian tensor $\mathbf{h}$ will be given below.}
\begin{equation*}
    \mathbf{e}^{(2)\,\sharp}=\mathbf{d},\quad\quad\quad \mathbf{e}^{(-2)\,\flat}=\mathbf{d},\quad\quad\quad \mathbf{e}^{(0)\,\text{log}}=\mathbf{d}.
\end{equation*}

Let $\mathbf{H}\in \mathcal{T}^2$ be a Lagrangian tensor. We introduce the \emph{upper and lower Lagrangian Oldroyd's rates} of the tensor $\mathbf{H}$ (see, e.g., \cite{KorobeynikovJElast2008,KorobeynikovJElast2021}):
\begin{equation*}
    \mathbf{H}^{\sharp} \equiv \dot{\mathbf{H}}-\mathbf{H}\cdot\mathbf{r}_U^T - \mathbf{r}_U\cdot \mathbf{H},\quad\quad\quad
    \mathbf{H}^{\flat} \equiv \dot{\mathbf{H}}+ \mathbf{H}\cdot\mathbf{r}_U + \mathbf{r}_U^T\cdot \mathbf{H}.
\end{equation*}

We also introduce the \emph{family of Lagrangian corotational rates} of the Lagrangian tensor $\mathbf{H}$ in the form of tensors
\begin{equation}\label{2-26}
    \mathbf{H}^{\Omega}\equiv \dot{\mathbf{H}} + \mathbf{H}\cdot\boldsymbol{\Omega} - \boldsymbol{\Omega}\cdot \mathbf{H},
\end{equation}
associated with \emph{spin tensors} $\boldsymbol{\Omega}\in \mathcal{T}^2_{\text{skew}}$ which belong to the \emph{family of continuous material spin tensors} \cite{KorobeynikovAM2011}.

Some oblective Lagrangian rates of Lagrangian strain tensors from the Hill family given in Table \ref{t1} are related to the Lagrangian stretching tensor $\widehat{\mathbf{D}}$ as follows (see, e.g., \cite{KorobeynikovJElast2021}):\footnote{Expressions for the spin tensor $\boldsymbol{\Omega}^{\text{log}}$ associated with the logarithmic corotational tensor rate $\mathbf{H}^{\text{log}}$ of any Lagrangian tensor $\mathbf{H}$ will be given below.}
\begin{equation*}
    \mathbf{E}^{(2)\,\sharp}=\widehat{\mathbf{D}},\quad\quad\quad \mathbf{E}^{(-2)\,\flat}=\widehat{\mathbf{D}},\quad\quad\quad \mathbf{E}^{(0)\,\log}=\widehat{\mathbf{D}}.
\end{equation*}

\begin{definition}
\label{def:2-2}
Let $\mathbf{h},\,\mathbf{H}\in \mathcal{T}^2$ be Eulerian and Lagrangian tensors, respectively. We will call these tensors \emph{objective counterparts} of each other if they are related by the equalities
\begin{equation}\label{2-28}
    \mathbf{h}=\mathbf{R} \cdot \mathbf{H} \cdot \mathbf{R}^T \quad \Leftrightarrow \quad \mathbf{H}=\mathbf{R}^T \cdot \mathbf{h} \cdot \mathbf{R}.
\end{equation}
\end{definition}

\begin{proposition}
\label{Pr:2-1}
Let $\mathbf{h},\,\mathbf{H}\in \mathcal{T}^2$ be Eulerian and Lagrangian tensors, respectively, which are objective counterparts of each other. Then the Oldroyd tensor rates $\mathbf{h}^{\sharp}$, $\mathbf{h}^{\flat}$ and $\mathbf{H}^{\sharp}$, $\mathbf{H}^{\flat}$ are objective counterparts of each other, i.e., the following equalities hold:
\begin{equation*}
    \mathbf{h}^{\sharp}=\mathbf{R} \cdot \mathbf{H}^{\sharp} \cdot \mathbf{R}^T,\quad \mathbf{h}^{\flat}=\mathbf{R} \cdot \mathbf{H}^{\flat} \cdot \mathbf{R}^T, \quad \Leftrightarrow \quad \mathbf{H}^{\sharp}=\mathbf{R}^T \cdot \mathbf{h}^{\sharp} \cdot \mathbf{R},\quad \mathbf{H}^{\flat}=\mathbf{R}^T \cdot \mathbf{h}^{\flat} \cdot \mathbf{R}.
\end{equation*}
\end{proposition}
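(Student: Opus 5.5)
The plan is a direct computation: differentiate the counterpart relation \eqref{2-28} in time and express the resulting spin terms through the velocity gradient. Two kinematic identities are needed first. The definition $\boldsymbol{\omega}^R\equiv\dot{\mathbf{R}}\cdot\mathbf{R}^T$ in \eqref{2-16} gives $\dot{\mathbf{R}}=\boldsymbol{\omega}^R\cdot\mathbf{R}$. Differentiating $\mathbf{R}\cdot\mathbf{R}^T=\mathbf{I}$ shows that $\boldsymbol{\omega}^R$ is skew, so $\dot{\mathbf{R}}^T=-\mathbf{R}^T\cdot\boldsymbol{\omega}^R$. The second identity comes from the definition of $\mathbf{r}_U$ and reads
\begin{equation*}
\boldsymbol{\ell}=\mathbf{R}\cdot\mathbf{r}_U\cdot\mathbf{R}^T+\boldsymbol{\omega}^R .
\end{equation*}
One can check this independently from $\mathbf{F}=\mathbf{R}\cdot\mathbf{U}$ and $\boldsymbol{\ell}=\dot{\mathbf{F}}\cdot\mathbf{F}^{-1}$. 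Indeed, $\dot{\mathbf{F}}\cdot\mathbf{F}^{-1}=\dot{\mathbf{R}}\cdot\mathbf{R}^T+\mathbf{R}\cdot\dot{\mathbf{U}}\cdot\mathbf{U}^{-1}\cdot\mathbf{R}^T$. Transposing and using the skewness of $\boldsymbol{\omega}^R$ gives $\boldsymbol{\ell}^T=\mathbf{R}\cdot\mathbf{r}_U^T\cdot\mathbf{R}^T-\boldsymbol{\omega}^R$.

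Next I differentiate $\mathbf{h}=\mathbf{R}\cdot\mathbf{H}\cdot\mathbf{R}^T$:
\begin{equation*}
\dot{\mathbf{h}}=\boldsymbol{\omega}^R\cdot\mathbf{h}-\mathbf{h}\cdot\boldsymbol{\omega}^R+\mathbf{R}\cdot\dot{\mathbf{H}}\cdot\mathbf{R}^T .
\end{equation*}
I then substitute this and the two expressions for $\boldsymbol{\ell}$ and $\boldsymbol{\ell}^T$ into $\mathbf{h}^{\sharp}=\dot{\mathbf{h}}-\mathbf{h}\cdot\boldsymbol{\ell}^T-\boldsymbol{\ell}\cdot\mathbf{h}$. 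The term $-\mathbf{h}\cdot\boldsymbol{\ell}^T$ contributes $+\mathbf{h}\cdot\boldsymbol{\omega}^R$, and $-\boldsymbol{\ell}\cdot\mathbf{h}$ contributes $-\boldsymbol{\omega}^R\cdot\mathbf{h}$. These cancel exactly the spin terms in $\dot{\mathbf{h}}$. What remains, after writing $\mathbf{h}=\mathbf{R}\cdot\mathbf{H}\cdot\mathbf{R}^T$ and using $\mathbf{R}^T\cdot\mathbf{R}=\mathbf{I}$, is
\begin{equation*}
\mathbf{h}^{\sharp}=\mathbf{R}\cdot\bigl(\dot{\mathbf{H}}-\mathbf{H}\cdot\mathbf{r}_U^T-\mathbf{r}_U\cdot\mathbf{H}\bigr)\cdot\mathbf{R}^T=\mathbf{R}\cdot\mathbf{H}^{\sharp}\cdot\mathbf{R}^T .
\end{equation*}

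The lower rate $\mathbf{h}^{\flat}=\dot{\mathbf{h}}+\mathbf{h}\cdot\boldsymbol{\ell}+\boldsymbol{\ell}^T\cdot\mathbf{h}$ is handled the same way. Here $\mathbf{h}\cdot\boldsymbol{\ell}$ contributes $+\mathbf{h}\cdot\boldsymbol{\omega}^R$ and $\boldsymbol{\ell}^T\cdot\mathbf{h}$ contributes $-\boldsymbol{\omega}^R\cdot\mathbf{h}$. Again these cancel the spin part of $\dot{\mathbf{h}}$, leaving $\mathbf{h}^{\flat}=\mathbf{R}\cdot\mathbf{H}^{\flat}\cdot\mathbf{R}^T$. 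The equivalent forms of the statement follow by multiplying by $\mathbf{R}^T$ on the left and by $\mathbf{R}$ on the right.

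There is no real obstacle. The only step needing care is the sign bookkeeping in the cancellation of the $\boldsymbol{\omega}^R$ terms, which relies on the skewness of $\boldsymbol{\omega}^R$ in the transposed identity.
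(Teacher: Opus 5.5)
Your proof is correct. The identity $\boldsymbol{\ell}=\boldsymbol{\omega}^R+\mathbf{R}\cdot\mathbf{r}_U\cdot\mathbf{R}^T$ is the paper's relation $\mathbf{r}_U=\mathbf{R}^T\cdot(\boldsymbol{\ell}-\boldsymbol{\omega}^R)\cdot\mathbf{R}$ rearranged. The $\boldsymbol{\omega}^R$ terms cancel with exactly the signs you give, in both the upper and the lower rate. You never use symmetry of $\mathbf{h}$, so the conclusion holds for all $\mathbf{h}\in\mathcal{T}^2$, as stated.

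The paper itself states Proposition~\ref{Pr:2-1} without proof, so there is no argument in the text to compare with. Your direct differentiation is the standard verification.

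If you want a version without the spin bookkeeping, use the convected representations
\begin{equation*}
\mathbf{h}^{\sharp}=\mathbf{F}\cdot\tfrac{d}{dt}\bigl(\mathbf{F}^{-1}\cdot\mathbf{h}\cdot\mathbf{F}^{-T}\bigr)\cdot\mathbf{F}^{T},\quad
\mathbf{H}^{\sharp}=\mathbf{U}\cdot\tfrac{d}{dt}\bigl(\mathbf{U}^{-1}\cdot\mathbf{H}\cdot\mathbf{U}^{-1}\bigr)\cdot\mathbf{U},
\end{equation*}
together with $\mathbf{F}^{-1}\cdot\mathbf{h}\cdot\mathbf{F}^{-T}=\mathbf{U}^{-1}\cdot\mathbf{H}\cdot\mathbf{U}^{-1}$. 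The lower rates work the same way: $\mathbf{h}^{\flat}=\mathbf{F}^{-T}\cdot\tfrac{d}{dt}(\mathbf{F}^{T}\cdot\mathbf{h}\cdot\mathbf{F})\cdot\mathbf{F}^{-1}$ and $\mathbf{H}^{\flat}=\mathbf{U}^{-1}\cdot\tfrac{d}{dt}(\mathbf{U}\cdot\mathbf{H}\cdot\mathbf{U})\cdot\mathbf{U}^{-1}$, with $\mathbf{F}^{T}\cdot\mathbf{h}\cdot\mathbf{F}=\mathbf{U}\cdot\mathbf{H}\cdot\mathbf{U}$. Since $\mathbf{F}=\mathbf{R}\cdot\mathbf{U}$, the claim then follows in one line. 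This route also explains why it holds: both rates are push-forwards of the material derivative of the same referential tensor.
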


\begin{proposition}
\label{Pr:2-2}
Let $\mathbf{h},\,\mathbf{H}\in \mathcal{T}^2$ be Eulerian and Lagrangian tensors, respectively, which are objective counterparts of each other. Then the necessary and sufficient condition for the corotational rates $\mathbf{h}^{\omega}$ and $\mathbf{H}^{\Omega}$ of the tensors $\mathbf{h}$ and $\mathbf{H}$ to be objective counterparts of each other is given by
\begin{equation}\label{2-30}
  \boldsymbol{\omega}=\boldsymbol{\omega}^R + \mathbf{R} \cdot \boldsymbol{\Omega} \cdot \mathbf{R}^T \quad \Leftrightarrow \quad \boldsymbol{\Omega} = \mathbf{R}^T \cdot (\boldsymbol{\omega} - \boldsymbol{\omega}^R) \cdot \mathbf{R},
\end{equation}
where $\boldsymbol{\omega},\,\boldsymbol{\Omega}\in \mathcal{T}^2_{\text{skew}}$ are spin tensors associated with the corotational tensor rates $\mathbf{h}^{\omega}$ and $\mathbf{H}^{\Omega}$, respectively.\footnote{Proposition \ref{Pr:2-2} was formulated and proved in \cite{Korobeynikov2023} (see Proposition 2.1 in \cite{Korobeynikov2023}).}\\
\end{proposition}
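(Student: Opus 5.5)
The plan is a direct computation: differentiate the relation \eqref{2-28}, $\mathbf{h}=\mathbf{R}\cdot\mathbf{H}\cdot\mathbf{R}^T$, with respect to time and compare the result with the definitions \eqref{2-23} and \eqref{2-26}. Using $\boldsymbol{\omega}^R=\dot{\mathbf{R}}\cdot\mathbf{R}^T$ from \eqref{2-16}, together with $\dot{\mathbf{R}}^T\cdot\mathbf{R}=-\mathbf{R}^T\cdot\boldsymbol{\omega}^R\cdot\mathbf{R}$ (which follows from $\mathbf{R}\cdot\mathbf{R}^T=\mathbf{I}$ and the skew-symmetry of $\boldsymbol{\omega}^R$), I will obtain
\begin{equation*}
\dot{\mathbf{h}}=\mathbf{R}\cdot\dot{\mathbf{H}}\cdot\mathbf{R}^T+\boldsymbol{\omega}^R\cdot\mathbf{h}-\mathbf{h}\cdot\boldsymbol{\omega}^R .
\end{equation*}
Substituting this into \eqref{2-23} gives
\begin{equation*}
\mathbf{h}^{\omega}=\mathbf{R}\cdot\dot{\mathbf{H}}\cdot\mathbf{R}^T+\mathbf{h}\cdot(\boldsymbol{\omega}-\boldsymbol{\omega}^R)-(\boldsymbol{\omega}-\boldsymbol{\omega}^R)\cdot\mathbf{h}.
\end{equation*}

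Next I pull back by $\mathbf{R}$. I set $\widetilde{\boldsymbol{\Omega}}\equiv\mathbf{R}^T\cdot(\boldsymbol{\omega}-\boldsymbol{\omega}^R)\cdot\mathbf{R}$, which is skew-symmetric, and use $\mathbf{H}=\mathbf{R}^T\cdot\mathbf{h}\cdot\mathbf{R}$. This yields the identity
\begin{equation*}
\mathbf{R}^T\cdot\mathbf{h}^{\omega}\cdot\mathbf{R}=\dot{\mathbf{H}}+\mathbf{H}\cdot\widetilde{\boldsymbol{\Omega}}-\widetilde{\boldsymbol{\Omega}}\cdot\mathbf{H},
\end{equation*}
valid for any spin $\boldsymbol{\omega}$. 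Comparing with \eqref{2-26}, I get
\begin{equation*}
\mathbf{R}^T\cdot\mathbf{h}^{\omega}\cdot\mathbf{R}-\mathbf{H}^{\Omega}=\mathbf{H}\cdot(\widetilde{\boldsymbol{\Omega}}-\boldsymbol{\Omega})-(\widetilde{\boldsymbol{\Omega}}-\boldsymbol{\Omega})\cdot\mathbf{H}.
\end{equation*}
Sufficiency is then immediate: if \eqref{2-30} holds, then $\boldsymbol{\Omega}=\widetilde{\boldsymbol{\Omega}}$, the right-hand side vanishes, and so $\mathbf{h}^{\omega}=\mathbf{R}\cdot\mathbf{H}^{\Omega}\cdot\mathbf{R}^T$.

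Necessity is the main obstacle. The difference vanishes exactly when $\widetilde{\boldsymbol{\Omega}}-\boldsymbol{\Omega}$ commutes with $\mathbf{H}$. For a particular $\mathbf{H}$, for instance an isotropic one or one with a repeated eigenvalue, this does not force $\boldsymbol{\Omega}=\widetilde{\boldsymbol{\Omega}}$. I will therefore read the statement as requiring that the counterpart relation hold for all objective counterparts $\mathbf{h},\mathbf{H}$, i.e.\ the rates are treated as operators and the spins do not depend on $\mathbf{H}$ (they are material spins determined by the kinematics). Under that reading I conclude by choosing $\mathbf{H}$ arbitrary. A skew tensor $\mathbf{A}$ that commutes with every symmetric $\mathbf{H}$ must vanish: taking $\mathbf{H}=\mathbf{e}_1\otimes\mathbf{e}_1$ kills every off-diagonal component of $\mathbf{A}$ involving index 1, and the same argument for indices 2 and 3 gives $\mathbf{A}=\mathbf{O}$. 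Hence $\boldsymbol{\Omega}=\mathbf{R}^T\cdot(\boldsymbol{\omega}-\boldsymbol{\omega}^R)\cdot\mathbf{R}$. The equivalence of the two forms in \eqref{2-30} then follows by conjugating with $\mathbf{R}$.
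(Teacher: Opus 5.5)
Your argument is correct: differentiating $\mathbf{h}=\mathbf{R}\cdot\mathbf{H}\cdot\mathbf{R}^T$ gives the identity $\mathbf{R}^T\cdot\mathbf{h}^{\omega}\cdot\mathbf{R}-\mathbf{H}^{\Omega}=\mathbf{H}\cdot(\widetilde{\boldsymbol{\Omega}}-\boldsymbol{\Omega})-(\widetilde{\boldsymbol{\Omega}}-\boldsymbol{\Omega})\cdot\mathbf{H}$, which is the natural direct computation for this result; the paper gives no proof of its own and only points to Proposition 2.1 of the cited 2023 work. Your remark that necessity holds only if the correspondence is required for arbitrary $\mathbf{H}$, with the spins fixed by the kinematics, is correct and sharpens the statement as written, since e.g.\ $\mathbf{H}=\mathbf{I}$ imposes no constraint on the spins.
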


\begin{proposition}
\label{Pr:2-2a}
Let $\mathbf{a},\,\mathbf{b}\in \mathcal{T}^2$ be Eulerian and $\mathbf{A},\,\mathbf{B}\in \mathcal{T}^2$ be Lagrangian tensors, respectively, which are objective counterparts of each other, i.e., (see Definition \ref{def:2-2})
\begin{equation*}
    \mathbf{a}=\mathbf{R} \cdot \mathbf{A} \cdot \mathbf{R}^T,\quad \mathbf{b}=\mathbf{R} \cdot \mathbf{B} \cdot \mathbf{R}^T \quad \Leftrightarrow \quad \mathbf{A}=\mathbf{R}^T \cdot \mathbf{a} \cdot \mathbf{R},\quad \mathbf{B}=\mathbf{R}^T \cdot \mathbf{b} \cdot \mathbf{R}.
\end{equation*}
Then the following equality holds:
\begin{equation*}
  \mathbf{A}:\dot{\mathbf{B}} = \mathbf{a}:\mathbf{b}^{\mathrm{GN}}\quad \ (\Leftrightarrow \quad \langle A,\frac{\mathrm{D}}{\mathrm{D}t}[B]\rangle =
  \langle a,\frac{\mathrm{D}^{\mathrm{GN}}}{\mathrm{D}t}[b]\rangle).
\end{equation*}
\end{proposition}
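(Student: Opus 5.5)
The plan is to identify $\mathbf{b}^{\mathrm{GN}}$ as the Green--Naghdi corotational rate, i.e. the member of the family \eqref{2-23} associated with the polar spin $\boldsymbol{\omega}=\boldsymbol{\omega}^R$:
\begin{equation*}
\mathbf{b}^{\mathrm{GN}}\equiv\dot{\mathbf{b}}+\mathbf{b}\cdot\boldsymbol{\omega}^R-\boldsymbol{\omega}^R\cdot\mathbf{b}.
\end{equation*}
The key observation is that the material rate $\dot{\mathbf{B}}$ is itself a Lagrangian corotational rate \eqref{2-26}, namely the one with zero spin $\boldsymbol{\Omega}=\mathbf{O}$.

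Step 1 is to show that $\dot{\mathbf{B}}$ and $\mathbf{b}^{\mathrm{GN}}$ are objective counterparts in the sense of Definition \ref{def:2-2}. Proposition \ref{Pr:2-2} gives this directly: with $\boldsymbol{\Omega}=\mathbf{O}$, condition \eqref{2-30} reduces to $\boldsymbol{\omega}=\boldsymbol{\omega}^R$. Hence
\begin{equation*}
\dot{\mathbf{B}}=\mathbf{R}^T\cdot\mathbf{b}^{\mathrm{GN}}\cdot\mathbf{R}.
\end{equation*}
As a check, the same identity follows by differentiating $\mathbf{B}=\mathbf{R}^T\cdot\mathbf{b}\cdot\mathbf{R}$. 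One uses $\dot{\mathbf{R}}=\boldsymbol{\omega}^R\cdot\mathbf{R}$ and $\dot{\mathbf{R}}^T=-\mathbf{R}^T\cdot\boldsymbol{\omega}^R$, which hold because $\boldsymbol{\omega}^R$ is skew by \eqref{2-16}. This gives
\begin{equation*}
\dot{\mathbf{B}}=\mathbf{R}^T\cdot(\dot{\mathbf{b}}-\boldsymbol{\omega}^R\cdot\mathbf{b}+\mathbf{b}\cdot\boldsymbol{\omega}^R)\cdot\mathbf{R}.
\end{equation*}

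Step 2 uses the invariance of the double contraction under simultaneous orthogonal conjugation. Substituting $\mathbf{A}=\mathbf{R}^T\cdot\mathbf{a}\cdot\mathbf{R}$ and the result of Step 1, and using $\mathbf{R}\cdot\mathbf{R}^T=\mathbf{I}$ with the cyclic property of the trace, we obtain
\begin{equation*}
\mathbf{A}:\dot{\mathbf{B}}=\text{tr}\bigl(\mathbf{R}^T\cdot\mathbf{a}\cdot\mathbf{R}\cdot\mathbf{R}^T\cdot(\mathbf{b}^{\mathrm{GN}})^T\cdot\mathbf{R}\bigr)=\text{tr}\bigl(\mathbf{a}\cdot(\mathbf{b}^{\mathrm{GN}})^T\bigr)=\mathbf{a}:\mathbf{b}^{\mathrm{GN}},
\end{equation*}
which is the claimed equality.

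There is no genuine obstacle. The only point needing care is the sign convention in the definition of $\mathbf{b}^{\mathrm{GN}}$, which must match \eqref{2-23} with $\boldsymbol{\omega}=\boldsymbol{\omega}^R$, together with the sign of $\dot{\mathbf{R}}^T$ in the direct differentiation.
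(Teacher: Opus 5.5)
Your proof is correct and matches what the paper intends. The paper states this proposition without a written proof, but its setup points to your argument: apply Proposition~\ref{Pr:2-2} with $\boldsymbol{\Omega}=\mathbf{O}$ and $\boldsymbol{\omega}=\boldsymbol{\omega}^R$ (the Green--Naghdi pair in \eqref{2-38}) to get $\dot{\mathbf{B}}=\mathbf{R}^T\cdot\mathbf{b}^{\mathrm{GN}}\cdot\mathbf{R}$, then use invariance of the double contraction under conjugation by $\mathbf{R}$. Your direct differentiation check with $\dot{\mathbf{R}}^T=-\mathbf{R}^T\cdot\boldsymbol{\omega}^R$ is also correct, and it makes the argument self-contained, independent of Proposition~\ref{Pr:2-2}, which the paper only cites from elsewhere.
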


Next, we explore the spin tensors $\boldsymbol{\Omega}$ and $\boldsymbol{\omega}$ from the family of continuous material spin tensors in two alternative versions (see, e.g., \cite{KorobeynikovAM2011,KorobeynikovAAM2020,NeffAM2025}):
\begin{description}
  \item[(i)] \emph{r-version}
 \begin{equation}\label{2-31}
    \boldsymbol{\Omega}=\boldsymbol{\Psi}_r(\mathbf{U},\widehat{\mathbf{D}}),\quad\quad\quad \boldsymbol{\omega}= \boldsymbol{\omega}^R + \boldsymbol{\Psi}_r(\mathbf{V},\mathbf{d}).
\end{equation}
Here $\boldsymbol{\Psi}_r(\mathbf{U},\widehat{\mathbf{D}})\in \mathcal{T}^2_{\text{skew}}$ is an isotropic tensor function of the arguments $\mathbf{U}$ and $\widehat{\mathbf{D}}$ which is linear in $\widehat{\mathbf{D}}$,\footnote{The isotropic tensor functions $\boldsymbol{\Psi}_r(\mathbf{V},\mathbf{d})$ and $\boldsymbol{\Psi}_g(\mathbf{V},\mathbf{d})$ are derived from the isotropic tensor functions $\boldsymbol{\Psi}_r(\mathbf{U},\widehat{\mathbf{D}})$ and $\boldsymbol{\Psi}_g(\mathbf{U},\widehat{\mathbf{D}})$ by replacing the Lagrangian tensor arguments $\mathbf{U}$ and $\widehat{\mathbf{D}}$ with their Eulerian counterparts $\mathbf{V}$ and $\mathbf{d}$, respectively.}
\begin{equation}\label{2-32}
    \boldsymbol{\Psi}_r(\mathbf{U},\widehat{\mathbf{D}}) \equiv \sum_{i\neq j=1}^{m}r_{ij} \mathbf{U}_i \cdot \widehat{\mathbf{D}} \cdot \mathbf{U}_j,
\end{equation}
where we used the notation
\begin{equation*}
    r_{ij} \equiv r(\lambda_i,\lambda_j),\quad\quad r_{ji}=-r_{ij}.
\end{equation*}
The properties of the function $r(x,y)$ are discussed in detail in \cite{XiaoIJSS1998,XiaoJElast1998} (see also \cite{KorobeynikovAM2011}).
  \item[(ii)] \emph{g-version}
\begin{equation}\label{2-34}
    \boldsymbol{\Omega}=\mathbf{W} + \boldsymbol{\Psi}_g(\mathbf{U},\widehat{\mathbf{D}}),\quad\quad\quad \boldsymbol{\omega}= \mathbf{w} +   \boldsymbol{\Psi}_g(\mathbf{V},\mathbf{d}).
\end{equation}
Here $\boldsymbol{\Psi}_g(\mathbf{U},\widehat{\mathbf{D}})\in \mathcal{T}^2_{\text{skew}}$ is an isotropic tensor function of the tensors $\mathbf{U}$ and $\widehat{\mathbf{D}}$ which is linear in $\widehat{\mathbf{D}}$,
\begin{equation}\label{2-35}
    \boldsymbol{\Psi}_g(\mathbf{U},\widehat{\mathbf{D}}) \equiv \sum_{i\neq j=1}^{m}g_{ij} \mathbf{U}_i \cdot \widehat{\mathbf{D}} \cdot \mathbf{U}_j,
\end{equation}
where we used the notation
\begin{equation*}
    g_{ij} \equiv g(\lambda_i,\lambda_j)\equiv r(\lambda_i,\lambda_j)-\frac{\lambda_i - \lambda_j}{\lambda_i + \lambda_j},\quad\quad\quad g_{ji}=-g_{ij}.
\end{equation*}
\end{description}

Below we give expressions for the quantities $g_{ij}$ and $r_{ij}$ generating some spin tensors from the family of continuous material spin tensors and the classical corotational symmetric tensor rates associated with them $(i,j=1,\ldots,m)$:
\begin{itemize}
  \item The \emph{Zaremba--Jaumann tensor rates} $\mathbf{H}^{\text{ZJ}}$ and $\left(\frac{\text{D}^{\text{ZJ}}}{\text{D}t}[\mathbf{h}] = \right)\, \mathbf{h}^{\text{ZJ}}$ associated with the spin tensors $\mathbf{W}$ and $\mathbf{w}$
\begin{equation}\label{2-37}
  g_{ij}^{\text{ZJ}}=0;\quad\quad r_{ij}^{\text{ZJ}}\equiv\frac{\lambda_i - \lambda_j}{\lambda_i + \lambda_j}.
\end{equation}
  \item The material rate $\dot{\mathbf{\mathbf{H}}}$ and the \emph{Green--Naghdi tensor rate} $\left(\frac{\text{D}^{\text{GN}}}{\text{D}t}[\mathbf{h}] = \right)\, \mathbf{h}^{\text{GN}}$ associated with the spin tensors $(\boldsymbol{\Omega}=)\mathbf{0}$ and $\boldsymbol{\omega}^R$
\begin{equation}\label{2-38}
  g_{ij}^{\text{GN}}=\frac{\lambda_j - \lambda_i}{\lambda_i + \lambda_j};\quad\quad r_{ij}^{\text{GN}}=0.
\end{equation}
  \item The \emph{Gurtin--Spear tensor rates} $\mathbf{H}^{\text{GS}}$ and $\left(\frac{\text{D}^{\text{GS}}}{\text{D}t}[\mathbf{h}] = \right)\,\mathbf{h}^{\text{GS}}$ \cite{GurtinIJSS1983} associated with the twirl tensors $\boldsymbol{\Omega}^{\text{GS}}$ and $\boldsymbol{\omega}^{\text{GS}}$ of Lagrangian and Eulerian triads, respectively
\begin{equation}\label{2-39}
    g_{ij}^{\text{GS}}\equiv \frac{\lambda_i^2 + \lambda_j^2}{\lambda_j^2 - \lambda_i^2};
   \quad\quad r_{ij}^{\text{GS}}\equiv \frac{2\lambda_i \lambda_j}{\lambda_j^2 - \lambda_i^2}.
\end{equation}
  \item The \emph{logarithmic tensor rates} $\mathbf{H}^{\log}$ and $\left(\frac{\text{D}^{\log}}{\text{D}t}[\mathbf{h}] = \right) \mathbf{h}^{\log}$ \cite{XiaoAM1997,Zhilin2013} associated with the spin tensor $\boldsymbol{\Omega}^{\log}$ and $\boldsymbol{\omega}^{\log}$
\begin{equation}\label{2-40}
    g_{ij}^{\log}\equiv \frac{\lambda_i^2 + \lambda_j^2}{\lambda_j^2 - \lambda_i^2}+\frac{1}{\log\lambda_i-\log\lambda_j};
\quad\quad r_{ij}^{\log}\equiv \frac{2\lambda_i \lambda_j}{\lambda_j^2 - \lambda_i^2} + \frac{1}{\log \lambda_i - \log \lambda_j}.
\end{equation}
\end{itemize}

\begin{remark}
\label{rem:2-1}
The twirl tensors $\boldsymbol{\Omega}^{\text{GS}}$ and $\boldsymbol{\omega}^{\text{GS}}$ do not belong to the family of continuous material spin tensors, since the scalar functions $g_{ij}^{\text{GS}}$ and $r_{ij}^{\text{GS}}$ generating these tensors have a discontinuity of the second kind at the point of coincidence of the principal stretches (i.e., for example, for $\lambda_3\rightarrow \lambda_2$) even if the law of motion $\mathbf{x}(\mathbf{X},t)$ is a sufficiently smooth function of $t$ (see, e.g., \cite{BruhnsAM2002,KorobeynikovAM2011,ScheidlerMM1991,XiaoArchMech1998,XiaoIJSS1998,XiaoJElast1998}). Moreover, when the principal stretches coalesce, the representation of the twirl tensors $\boldsymbol{\Omega}^{\text{GS}}$ and $\boldsymbol{\omega}^{\text{GS}}$ lose uniqueness (see, e.g., \cite{KorobeynikovAM2011}). However, for the laws of motion $\mathbf{x}(\mathbf{X},t)$ considered in this paper, the inequalities $\lambda_1\neq \lambda_2\neq \lambda_3\neq \lambda_1$ hold for $t> t_0$, i.e., the eigenindex $m=3$ and the coalescence of the principal stretches is ruled out. At the same time, the remaining requirements for the twirl tensors $\boldsymbol{\Omega}^{\text{GS}}$ and $\boldsymbol{\omega}^{\text{GS}}$ to belong to the family of continuous material spin tensors (see \cite{XiaoArchMech1998,XiaoIJSS1998,XiaoJElast1998} and \cite{KorobeynikovAM2011} for details) are satisfied. This allowed us to include the twirl tensors $\boldsymbol{\Omega}^{\text{GS}}$ and $\boldsymbol{\omega}^{\text{GS}}$ in the family of continuous material spin tensors for the laws of motion considered.
\end{remark}
\vspace*{2mm}

\begin{remark}
\label{rem:2-2}
Let the tensors $\mathbf{h}$ and $\mathbf{H}$ be Eulerian and Lagrangian counterparts of each other.  Xiao et al. \cite{XiaoArchMech1998,XiaoIJSS1998,XiaoJElast1998} generated a spin tensor of the form (see Eq. (51) in \cite{XiaoIJSS1998} and Eq. (4.42) in \cite{XiaoJElast1998})
\begin{equation}\label{2-41}
    \widetilde{\boldsymbol{\omega}}^L = \mathbf{w} + \widetilde{\boldsymbol{\Psi}}(\mathbf{V},\mathbf{d}),\quad\quad \widetilde{\boldsymbol{\Psi}}(\mathbf{V},\mathbf{d}) \equiv \sum_{i\neq j=1}^{m}r_{ij}^{\text{GS}} \mathbf{V}_i \cdot \mathbf{d} \cdot \mathbf{V}_j
\end{equation}
associated with the Eulerian corotational rate of an Eulerian tensor $\mathbf{h}$, which, according to \cite{XiaoArchMech1998,XiaoIJSS1998,XiaoJElast1998}, is the Eulerian counterpart of the Lagrangian corotational rate of the Lagrangian tensor $\mathbf{H}$ associated with the twirl tensor of the Lagrangian triad $\boldsymbol{\Omega}^{\text{GS}}$. The error in this interpretation of the tensor $\widetilde{\boldsymbol{\omega}}^L$ arises from the use of relations between spin tensors of the form \eqref{2-30}, but with the replacement of the polar spin tensor $\boldsymbol{\omega}^R$ by the vorticity tensor $\mathbf{w}$ (see Eq. (4.45) in \cite{XiaoJElast1998}). Replacing the vorticity tensor $\mathbf{w}$ in Eq. (4.45) in \cite{XiaoJElast1998} by the polar spin tensor $\boldsymbol{\omega}^R$, as required by the relation between the tensors  $\boldsymbol{\omega}$ and $\boldsymbol{\Omega}$ in \eqref{2-30}, we obtain, instead of the mechanically  meaningless tensor $\widetilde{\boldsymbol{\omega}}^L$, the expected twirl tensor of the Eulerian triad $\boldsymbol{\omega}^{\text{GS}}$ on the left-hand side of $\eqref{2-30}_1$. This misinterpretation of the spin tensor $\widetilde{\boldsymbol{\omega}}^L$ \cite{XiaoArchMech1998,XiaoIJSS1998,XiaoJElast1998} have been referenced and even used by other authors in testing hypoelastic models based on corotational stress rates in homogeneous deformation problems (see, e.g., \cite{LinIJNME2002,LinEJMAS2003,LinZAMM2003,LinZAMP2024,ZhouFEAD2003}). Moreover, when using the above-mentioned hypoelastic model with the corotational stress rate associated with the tensor $\widetilde{\boldsymbol{\omega}}^L$, Lin \cite{LinZAMP2024} even omitted the vorticity tensor $\mathbf{w}$ on the right-hand side of $\eqref{2-41}_1$ (see Eq. $(15)_3$ in \cite{LinZAMP2024}), which leads to the non-objectivity of the stress rate based on this expression for the tensor $\widetilde{\boldsymbol{\omega}}^L$.
\end{remark}

\subsection{Hooke-like isotropic hyper-/hypo-elastic material models}
\label{sec:2-3}

Consider the family of \emph{Hill's linear isotropic hyperelastic} (HLIH) material models with constitutive relations of the form \cite{Hill1979} (see also \cite{KorobeynikovJElast2019})
\begin{equation*}
  \mathbf{S} = 2\mu\, \mathbf{E} + \lambda\, (\text{tr}\,\mathbf{E})\,\mathbf{I} \quad \Leftrightarrow \quad \mathbf{s} =  2\mu\, \mathbf{e} + \lambda\, (\text{tr}\,\mathbf{e})\,\mathbf{I}
\end{equation*}
in the Lagrangian and Eulerian versions, respectively. Hereinafter, $\lambda$ and $\mu$ are the \emph{Lam\'{e} parameters}, $\textbf{E}$ and $\textbf{e}$ are any Lagrangian and Eulerian strain tensors from the Hill family, and $\textbf{S}$ and $\textbf{s}$ are work-conjugate stress tensors, so that the following equalities hold \cite{Hill1979} (see also \cite{KorobeynikovAM2018,XiaoArchMech1998} and Proposition \ref{Pr:2-2a}):
\begin{equation*}
  w = \mathbf{S}:\dot{\mathbf{E}} \quad \Leftrightarrow \quad w = \mathbf{s}:\mathbf{e}^{\text{GN}}.
\end{equation*}
Here $w\equiv \boldsymbol{\tau}:\mathbf{d} = \bar{\boldsymbol{\tau}}:\widehat{\mathbf{D}}$ is the \emph{stress power} (objective scalar) per unit volume of the deformable body $\mathfrak{B}$ in the reference configuration; $\boldsymbol{\tau}$ and $\bar{\boldsymbol{\tau}}$ are the symmetric Eulerian \emph{Kirchhoff} and Lagrangian \emph{rotated Kirchhoff stress tensors}
\begin{equation*}
\boldsymbol{\tau} \equiv J\boldsymbol{\sigma}, \quad\quad \bar{\boldsymbol{\tau}} \equiv \mathbf{R}^T \cdot \boldsymbol{\tau} \cdot \mathbf{R}, 
\end{equation*}
and $\boldsymbol{\sigma}$ denotes the symmetric Eulerian \emph{Cauchy (true) stress tensor}.

In \cite{KorobeynikovJElast2019}, the Lagrangian $\bar{\boldsymbol{\tau}}$ and Eulerian $\boldsymbol{\tau}$ stress tensors for these materials were expressed as
\begin{equation}\label{2-42}
    \bar{\boldsymbol{\tau}}=\sum^m_{i=1}g_i f^\prime(\lambda_i)\lambda_i \mathbf{U}_i\quad \Leftrightarrow \quad \boldsymbol{\tau}=\sum^m_{i=1}g_i f^\prime(\lambda_i)\lambda_i \mathbf{V}_i,
\end{equation}
where $f(\lambda)$ is any scale function that generates strain tensors $\mathbf{E}$ and $\mathbf{e}$ from the Hill family,
\begin{equation}\label{2-43}
    g_i\equiv \lambda (f_1+f_2+f_3)+2\mu f_i, \quad f_i\equiv f(\lambda_i),\quad f^\prime(\lambda_i)\equiv \frac{\partial f(\lambda_i)}{\partial \lambda_i}\quad (i=1,\ldots m).
\end{equation}

The constitutive relations for some models from the HLIH family are given in Table \ref{t2}.
\begin{table}
\caption{Hill's linear isotropic hyperelastic material models}
\label{t2}
\scriptsize
\begin{tabular}{l|lll}
\hline\noalign{\smallskip}
 Model's   & Strain &  \multicolumn{2}{c}{Constitutive relations}                  \\
 name      & tensor & Lagrangian version           &       Eulerian version        \\
\noalign{\smallskip}\hline\noalign{\smallskip}
   HLIH-H   & Hencky & $\bar{\boldsymbol{\tau}}\ \ \ = 2\mu\,\mathbf{E}^{(0)} + \lambda\,(\text{tr}\,\mathbf{E}^{(0)})\,\mathbf{I}$ & $\boldsymbol{\tau}\ \ \,= 2\mu\,\mathbf{e}^{(0)} + \lambda\,(\text{tr}\,\mathbf{e}^{(0)})\,\mathbf{I}$\\
   HLIH-P   & Pelzer & $\mathbf{S}^P\ = 2\mu\,\mathbf{E}^P + \lambda\,(\text{tr}\,\mathbf{E}^P)\,\mathbf{I}$ & $\mathbf{s}^P\ = 2\mu\,\mathbf{e}^P + \lambda\,(\text{tr}\,\mathbf{e}^P)\,\mathbf{I}$ \\
   HLIH-M   & Mooney & $\mathbf{S}^M\,  = 2\mu\,\mathbf{E}^M + \lambda\,(\text{tr}\,\mathbf{E}^M)\,\mathbf{I}$ & $\mathbf{s}^M\, = 2\mu\,\mathbf{e}^M + \lambda\,(\text{tr}\,\mathbf{e}^M)\,\mathbf{I}$ \\
\noalign{\smallskip}\hline
\end{tabular}
\end{table}
This table introduces the Pelzer and Mooney Lagrangian $\mathbf{S}^P$ and $\mathbf{S}^M$ and Eulerian $\mathbf{s}^P$ and $\mathbf{s}^M$ stress tensors. General basis-free expressions for these tensors are presented in \cite{KorobeynikovJElast2019}. For the hyperelastic material models considered here, these general expressions are simplified because for these models all Lagrangian (or Eulerian) stress tensor work-conjugate to Lagrangian (or Eulerian) strain tensors are coaxial with the right (or left) stretch tensor $\mathbf{U}$ (or $\mathbf{V}$). The simplified expressions for the stress tensors considered have the following form ($m$ is the eigenindex of the $\mathbf{U}$ and $\mathbf{V}$ tensors):
\begin{align*}
  \mathbf{S}^P \equiv  \sum^m_{i=1}\frac{2}{\lambda_i+\lambda_i^{-1}}\mathbf{U}_i\cdot \bar{\boldsymbol{\tau}}\cdot \mathbf{U}_i\quad \Leftrightarrow\quad
  &  \mathbf{s}^P \equiv  \sum^m_{i=1}\frac{2}{\lambda_i+\lambda_i^{-1}}\mathbf{V}_i\cdot \boldsymbol{\tau}\cdot \mathbf{V}_i, \\
  \mathbf{S}^M \equiv  \sum^m_{i=1}\frac{2}{\lambda_i^2+\lambda_i^{-2}}\mathbf{U}_i\cdot \bar{\boldsymbol{\tau}}\cdot \mathbf{U}_i\quad \Leftrightarrow\quad
  &  \mathbf{s}^M \equiv  \sum^m_{i=1}\frac{2}{\lambda_i^2+\lambda_i^{-2}}\mathbf{V}_i\cdot \boldsymbol{\tau}\cdot \mathbf{V}_i. \notag
\end{align*}
In particular, for the HLIH-H (or simply the Hencky) material model, the constitutive relations can be written in a simple and elegant form as
\begin{align}\label{2-44}
    {} & \bar{\boldsymbol{\tau}} = 2\mu\, \mathbf{E}^{(0)} + \lambda\, (\log J)\,\mathbf{I} = 2\mu\,\log \mathbf{U} + \lambda\,\text{tr}(\log \mathbf{U})\,\mathbf{I} \\ \Leftrightarrow \quad & \boldsymbol{\tau} = 2\mu\, \mathbf{e}^{(0)} + \lambda\, (\log J)\,\mathbf{I} = 2\mu\,\log \mathbf{V} + \lambda\,\text{tr}(\log \mathbf{V})\,\mathbf{I}. \notag
\end{align}

We now introduce another family of Hooke-like Ogden-type isotropic hyperelastic material models using the basis-free expressions for constitutive relations for isotropic hyperelastic incompressible materials \cite{KorobeynikovAAM2025} derived by generalizing constitutive relations for the well-known Ogden's model \cite{OgdenPRSLA1972a}. However, for greater generality, we consider a mixed version (cf., \cite{Korobeynikov2026}) of constitutive relations for this model taking into account  material compressibility \cite{KorobeynikovAAM2023} whose prototype is Ogden's constitutive relations for compressible isotropic hyperelastic material models \cite{OgdenPRSLA1972b} (see also \cite{Wriggers2008}). Next, we confine ourselves to two typical one-power models and then  consider a family of two-power Ogden-type models of special form.

The constitutive relations for the one-power Ogden's models considered in this paper are presented in Table \ref{t3}.
\begin{table}
\caption{One-power Ogden's compressible isotropic hyperelastic material models}
\label{t3}
\scriptsize
\begin{tabular}{l|lll}
\hline\noalign{\smallskip}
 Model's   & Strain &  \multicolumn{2}{c}{Constitutive relations}                  \\
 name      & tensors & Lagrangian version                                                               & Eulerian version        \\
\noalign{\smallskip}\hline\noalign{\smallskip}
  Ogden-A   & $\mathbf{E}^{(2)}$, $\mathbf{e}^{(2)}$   & $\bar{\boldsymbol{\tau}} = 2\mu\, \mathbf{E}^{(2)} + \lambda\, (\log J)\,\mathbf{I}$ & $\boldsymbol{\tau} = 2\mu\, \mathbf{e}^{(2)} + \lambda\, (\log J)\,\mathbf{I}$ \\
  Ogden-B   & $\mathbf{E}^{(-2)}$, $\mathbf{e}^{(-2)}$ & $\bar{\boldsymbol{\tau}} = 2\mu\, \mathbf{E}^{(-2)} + \lambda\, (\log J)\,\mathbf{I}$ & $\boldsymbol{\tau} = 2\mu\, \mathbf{e}^{(-2)} + \lambda\, (\log J)\,\mathbf{I}$ \\
\noalign{\smallskip}\hline
\end{tabular}
\end{table}
Both of these models are special cases of the mixed-type compressible isotropic hyperelastic Mooney--Rivlin material model (see Appendix \ref{sec:A}). In addition, the Ogden-A material model is known as the mixed-type compressible isotropic hyperelastic neo-Hookean model \cite{Korobeynikov2026} or the Simo--Pister one \cite{SimoCMAME1984}.

We generate a family of two-power Ogden-type models with the following constitutive relations for compressible isotropic hyperelastic materials:
\begin{equation}\label{2-47}
    \bar{\boldsymbol{\tau}} = 2\mu\, \mathbf{E}_r + \lambda\, (\log J)\,\mathbf{I} \quad \Leftrightarrow \quad \boldsymbol{\tau} = 2\mu\, \mathbf{e}_r + \lambda\, (\log J)\,\mathbf{I}.
\end{equation}
Here $\mathbf{E}_r$ and $\mathbf{e}_r$ are strain tensors from the Ba\v{z}ant--Itskov family generated by scale functions of form \eqref{2-13}. For $r=0$, this model is the Hencky isotropic hyperelastic material model (or the HLIH-H model) with constitutive relations \eqref{2-44}.\footnote{Note that the Hencky model is included as a special case in the generalized Ogden model \cite{KorobeynikovAAM2023,KorobeynikovAAM2025}.} For $r>0$, using expressions \eqref{2-13}, we can rewrite constitutive relations \eqref{2-47} as
\begin{equation}\label{2-47a}
    \bar{\boldsymbol{\tau}} = \mu\, \mathbf{E}^{(r)} + \mu\, \mathbf{E}^{(-r)} + \lambda\, (\log J)\,\mathbf{I} \quad \Leftrightarrow \quad \boldsymbol{\tau} = \mu\, \mathbf{e}^{(r)} + \mu\, \mathbf{e}^{(-r)} + \lambda\, (\log J)\,\mathbf{I}.
\end{equation}
The fact that constitutive relations \eqref{2-47} can be rewritten in the form \eqref{2-47a} justifies the term ``the family of two-power Ogden-type models.'' The constitutive relations for the two-power Ogden's models considered in this paper are exposed in Table \ref{t4}.
\begin{table}
\caption{Ogden's type Ba\v{z}ant--Itskov strain tensor based isotropic hyperelastic material models}
\label{t4}
\scriptsize
\begin{tabular}{l|lll}
\hline\noalign{\smallskip}
 Model's   & Strain &  \multicolumn{2}{c}{Constitutive relations}                  \\
 name      & tensor & Lagrangian version                                                               & Eulerian version        \\
\noalign{\smallskip}\hline\noalign{\smallskip}
   OBI-H   & Hencky & $\bar{\boldsymbol{\tau}}=2\mu\,\log\,\mathbf{U} + \lambda\,(\log\,J)\,\mathbf{I}$ & $\boldsymbol{\tau} = 2\mu\,\log\,\mathbf{V} + \lambda\,(\log\,J)\,\mathbf{I}$ \\
   OBI-P   & Pelzer & $\bar{\boldsymbol{\tau}}= \mu\,(\mathbf{U} + \mathbf{U}^{-1}) + \lambda\,(\log\,J)\,\mathbf{I}$ & $\boldsymbol{\tau} = \mu\,(\mathbf{V} + \mathbf{V}^{-1}) +  \lambda\,(\log\,J)\,\mathbf{I}$ \\
   OBI-M   & Mooney & $\bar{\boldsymbol{\tau}}=\frac{\mu}{2}(\mathbf{C} + \mathbf{C}^{-1}) + \lambda\,(\log\,J)\,\mathbf{I}$ & $\boldsymbol{\tau} = \frac{\mu}{2}(\mathbf{c} + \mathbf{c}^{-1}) + \lambda\,(\log\,J)\,\mathbf{I}$ \\
\noalign{\smallskip}\hline
\end{tabular}
\end{table}
Note that the OBI-M model is a mixed-type compressible (isotropic hyperelastic) Mooney--Rivlin material model with the parameter values $\mu_1=\mu_2=\mu/2$ (see Appendix \ref{sec:A}).

In addition to testing the Hooke-like isotropic hyperelastic material models considered above, we will also test Hooke-like isotropic hypo-elastic models (see, e.g., \cite{AubramMMS2025,KorobeynikovAAM2020,KorobeynikovJElast2021,Korobeynikov2023,KorobeynikovZAMM2024,NeffJMPS2025,XiaoAM1997,XiaoJElast1997,XiaoAM1999,XiaoJElast1999}). The first two models use the upper and lower Oldroyd stress rates as objective stress rates and have the following constitutive relations:
\begin{itemize}
  \item for the upper Oldroyd stress rates,
\begin{equation}\label{2-47b}
    \bar{\boldsymbol{\tau}}^{\sharp} = 2\mu\, \widehat{\mathbf{D}} + \lambda\, (\text{tr}\,\widehat{\mathbf{D}})\,\mathbf{I}  \quad \Leftrightarrow \quad  \boldsymbol{\tau}^{\sharp} = 2\mu\, \mathbf{d} + \lambda\, (\text{tr}\,\mathbf{d})\,\mathbf{I};
\end{equation}
  \item for the lower Oldroyd stress rates,
\begin{equation}\label{2-48}
    \bar{\boldsymbol{\tau}}^{\flat} = 2\mu\, \widehat{\mathbf{D}} + \lambda\, (\text{tr}\,\widehat{\mathbf{D}})\,\mathbf{I}  \quad \Leftrightarrow \quad \boldsymbol{\tau}^{\flat} = 2\mu\, \mathbf{d} + \lambda\, (\text{tr}\,\mathbf{d})\,\mathbf{I}.
\end{equation}
\end{itemize}

For the other constitutive relations for Hooke-like isotropic hypoelasticity, as objective stress rates we use corotational stress rates associated with spin tensors from the family of material spin tensors\footnote{Note that the Hencky model is also included in the rate formulation $\eqref{2-49}_2$ by choosing \linebreak  $\frac{\text{D}^{\circ}}{\text{D}t}[\boldsymbol{\tau}] = \frac{\text{D}^{\log}}{\text{D}t}[\boldsymbol{\tau}] = 2\mu\, \mathbf{d} + \lambda\, (\text{tr}\,\mathbf{d})\,\mathbf{I}$ \cite{XiaoJElast1999,XiaoAM1999}.}
\begin{equation}\label{2-49}
    \bar{\boldsymbol{\tau}}^{\Omega} = 2\mu\, \widehat{\mathbf{D}} + \lambda\, (\text{tr}\,\widehat{\mathbf{D}})\,\mathbf{I}  \quad \Leftrightarrow \quad \left(\frac{\text{D}^{\circ}}{\text{D}t}[\boldsymbol{\tau}] = \right)\,\boldsymbol{\tau}^{\omega} = 2\mu\, \mathbf{d} + \lambda\, (\text{tr}\,\mathbf{d})\,\mathbf{I}.
\end{equation}

Note that for $\lambda=0$ or $J=1$ without initial stresses, the hypoelastic material models with constitutive relations \eqref{2-47b} and \eqref{2-48} are rate forms of ones for hyperelastic materials represented in Table \ref{t3} \cite{KorobeynikovJElast2021}. However, of the entire family of hypoelastic models with constitutive relations \eqref{2-49}, only one model based on the logarithmic stress rate is the rate form of the Hencky isotropic hyperelastic  material model for an arbitrary type of deformation without initial stresses \cite{XiaoJElast1997,XiaoAM1997,XiaoAM1999,XiaoJElast1999} (see also \cite{KorobeynikovAAM2020,KorobeynikovJElast2021}).

Explicit expressions for objective stress rates and identification  of the considered isotropic Hooke-like hypoelasticity models with constitutive relations of the form
\begin{equation*}
    \bar{\boldsymbol{\tau}}^{\nabla} = 2\mu\, \widehat{\mathbf{D}} + \lambda\, (\text{tr}\,\widehat{\mathbf{D}})\,\mathbf{I}  \quad \Leftrightarrow \quad \boldsymbol{\tau}^{\nabla} = 2\mu\, \mathbf{d} + \lambda\, (\text{tr}\,\mathbf{d})\,\mathbf{I}.
\end{equation*}
are given in Table \ref{t5}.
\begin{table}
\caption{Explicit expressions for objective stress rates and identification of isotropic Hooke-like hypoelasticity models}
\label{t5}
\scriptsize
\begin{tabular}{l|lll}
\hline\noalign{\smallskip}
 Model's   &  \multicolumn{2}{c}{Definition of stress rates}                                                                   \\
 name      &  Lagrangian version ($\bar{\boldsymbol{\tau}}^{\nabla}$)    & Eulerian version ($\boldsymbol{\tau}^{\nabla}$)       \\
\noalign{\smallskip}\hline\noalign{\smallskip}
  Hypo-A   &  $\bar{\boldsymbol{\tau}}^{\sharp}\ \ \ \equiv \dot{\bar{\boldsymbol{\tau}}}-\bar{\boldsymbol{\tau}}\cdot\mathbf{r}_U^T - \mathbf{r}_U\cdot \bar{\boldsymbol{\tau}}$ & $\boldsymbol{\tau}^{\sharp}\ \ \  \equiv \dot{\boldsymbol{\tau}}-\boldsymbol{\tau}\cdot\boldsymbol{\ell}^T - \boldsymbol{\ell}\cdot \boldsymbol{\tau}$ \\
  Hypo-B   &  $\bar{\boldsymbol{\tau}}^{\flat}\ \ \ \equiv \dot{\bar{\boldsymbol{\tau}}}+ \bar{\boldsymbol{\tau}}\cdot\mathbf{r}_U + \mathbf{r}_U^T\cdot \bar{\boldsymbol{\tau}}$ & $\boldsymbol{\tau}^{\flat}\ \ \  \equiv \dot{\boldsymbol{\tau}}+ \boldsymbol{\tau}\cdot\boldsymbol{\ell} + \boldsymbol{\ell}^T\cdot \boldsymbol{\tau}$ \\
  Hypo-ZJ  &  $\bar{\boldsymbol{\tau}}^{\text{ZJ}}\ \equiv \dot{\bar{\boldsymbol{\tau}}} + \bar{\boldsymbol{\tau}}\cdot \mathbf{W} - \mathbf{W}\cdot \bar{\boldsymbol{\tau}}$ &
  $\boldsymbol{\tau}^{\text{ZJ}}\ \equiv \dot{\boldsymbol{\tau}}+ \boldsymbol{\tau}\cdot\mathbf{w} - \mathbf{w}\cdot \boldsymbol{\tau}$  \\
  Hypo-GN  &  $\bar{\boldsymbol{\tau}}^{\nabla}\ \ \equiv \dot{\bar{\boldsymbol{\tau}}}$ &
  $\boldsymbol{\tau}^{\text{GN}} \equiv \dot{\boldsymbol{\tau}}+ \boldsymbol{\tau}\cdot\boldsymbol{\omega}^R - \boldsymbol{\omega}^R\cdot \boldsymbol{\tau}$  \\
  Hypo-GS  &  $\bar{\boldsymbol{\tau}}^{\text{GS}} \, \equiv \dot{\bar{\boldsymbol{\tau}}} + \bar{\boldsymbol{\tau}}\cdot \boldsymbol{\Omega}^{\text{GS}} - \boldsymbol{\Omega}^{\text{GS}}\cdot \bar{\boldsymbol{\tau}}$ &
  $\boldsymbol{\tau}^{\text{GS}}\, \equiv \dot{\boldsymbol{\tau}}+ \boldsymbol{\tau}\cdot\boldsymbol{\omega}^{\text{GS}} - \boldsymbol{\omega}^{\text{GS}}\cdot \boldsymbol{\tau}$  \\
  Hypo-log  &  $\bar{\boldsymbol{\tau}}^{\log}\, \equiv \dot{\bar{\boldsymbol{\tau}}} + \bar{\boldsymbol{\tau}}\cdot \boldsymbol{\Omega}^{\log} - \boldsymbol{\Omega}^{\log}\cdot \bar{\boldsymbol{\tau}}$ &
  $\boldsymbol{\tau}^{\log}\, \equiv \dot{\boldsymbol{\tau}}+ \boldsymbol{\tau}\cdot\boldsymbol{\omega}^{\log} - \boldsymbol{\omega}^{\log}\cdot \boldsymbol{\tau}$  \\
\noalign{\smallskip}\hline
\end{tabular}
\end{table}

\section{Kinematics of finite simple shear deformations}
\label{sec:3}

We consider a homogeneous deformation of a material sample in space with the law of motion \eqref{1-6}. As noted in Section \ref{sec:1}, Thiel et al. \cite{ThielIJNLM2019} proposed to supplement the constraint \eqref{1-7} on the motion parameters $a$, $b$, $c$, and $d$ by two additional constraints
\begin{equation}\label{3-1}
  a\,c=1, \quad d=1,
\end{equation}
which ensure that the deformation of the sample is both planar ($d=1$) and isochoric ($J=a\,c\,d=1$).

Subject only to constraints \eqref{3-1}, the left and right Cauchy--Green deformation tensors \eqref{2-9} take the form
\begin{equation}\label{3-2}
  \mathbf{c}_{3D}=\left[
                    \begin{array}{ccc}
                      a^2+b^2   & b\,c  & 0 \\
                      b\,c      & c^2 & 0 \\
                      0         & 0   & 1 \\
                    \end{array}
                  \right],\quad\quad
  \mathbf{C}_{3D}= \left[
                    \begin{array}{ccc}
                      a^2 & a\,b      & 0 \\
                      a\,b  & b^2+c^2 & 0 \\
                      0   & 0       & 1 \\
                    \end{array}
                  \right].
\end{equation}

Since, in linear elasticity theory, Hooke's law for isotropic materials relates the coaxial Cauchy stress $\boldsymbol{\sigma}$ and infinitesimal strain  tensors $\boldsymbol{\varepsilon}$, pure shear stress and pure shear strain states for such (linear elastic) materials occur simultaneously (see, e.g., \cite{ThielIJNLM2019}). In finite strain theory, the coaxially of stress and strain tensors in an arbitrary deformation is ensured only for isotropic Cauchy/Green elastic materials (see, e.g., \cite{Ogden1984}). In this case, the Cauchy $\boldsymbol{\sigma}$ and Kirchhoff $\boldsymbol{\tau}$ stress tensors are coaxial with the left Cauchy--Green deformation tensor $\mathbf{c}$, and the rotated Cauchy $\bar{\boldsymbol{\sigma}}\equiv \mathbf{R}\cdot \boldsymbol{\sigma}\cdot \mathbf{R}^T$ and Kirchhoff $\bar{\boldsymbol{\tau}}\equiv \mathbf{R}\cdot \boldsymbol{\tau}\cdot \mathbf{R}^T$ stress tensors are coaxial with the right Cauchy--Green deformation tensor $\mathbf{C}$.

Further, along with the Eulerian pure shear stress \eqref{1-1}, we will also consider the Lagrangian pure shear stress
\begin{equation}\label{3-3}
   \bar{\boldsymbol{\sigma}}=\bar{\sigma}_{12}\mathbf{k}_1\otimes \mathbf{k}_2 + \bar{\sigma}_{12}\mathbf{k}_2\otimes \mathbf{k}_1\quad  \Leftrightarrow \quad
   \bar{\boldsymbol{\sigma}} = \left[                                                                                                                                                                   \begin{array}{ccc}
      0                 & \bar{\sigma}_{12} & 0 \\
      \bar{\sigma}_{12} & 0                 & 0 \\
      0                 & 0                 & 0 \\
   \end{array}
   \right].
\end{equation}
Since the stress tensors $\boldsymbol{\sigma}$ and $\bar{\boldsymbol{\sigma}}$ are related by rotation operations, the law of motion \eqref{1-6}, which for a sample of some material leads to the Eulerian pure shear stress \eqref{1-1}, generally does not lead to the Lagrangian pure shear stress \eqref{3-3} for this sample and vice versa. This implies that, in the general case of finite strains, the Eulerian and Lagrangian pure shear stresses (if they are possible at all for the law of motion \eqref{1-6} with constraints \eqref{3-1}) are reached for the law of motion \eqref{1-6} with different sets of the parameters $a$, $b$, and $c$ for $d=1$.

If the Eulerian pure shear stress \eqref{1-1} can be reached, the condition of coaxiality of the tensor $\mathbf{c}$ and the tensor $\boldsymbol{\sigma}$ leads to the equality of the components $c_{11}^{3D}=c_{22}^{3D}$ in $\eqref{3-2}_1$ \cite{MoonARMA1974}, which implies the constraint \eqref{1-7} on the parameters $a$, $b$, and $c$. Similarly, if the Lagrangian pure shear stress \eqref{3-3} can be reached, the coaxiality of the tensor $\mathbf{C}$ and the tensor $\bar{\boldsymbol{\sigma}}$ leads to the equality of the components $C_{11}^{3D}=C_{22}^{3D}$ in $\eqref{3-2}_2$ \cite{LinZAMP2024}, which implies the constraint \eqref{1-8} on the parameters $a$, $b$, and $c$.

To recapitulate, Thiel et al. \cite{ThielIJNLM2019} impose two common constraints \eqref{3-1} and two different constraints \eqref{1-7} or \eqref{1-8} on the four parameters $a$, $b$, $c$, and $d$ in the law of motion \eqref{1-6}. With the use of constraints \eqref{3-1} and \eqref{1-7} on the four parameters $a$, $b$, $c$, and $d$, the law of motion \eqref{1-6} for isotropic Cauchy/Green elastic materials allows the possibility of the occurrence of the Eulerian pure shear stress \eqref{1-1}. Similarly, with the adoption of constraints \eqref{3-1} and  \eqref{1-8} on the four parameters $a$, $b$, $c$, and $d$, the law of motion \eqref{1-6} for isotropic Cauchy/Green elastic materials allows the possibility of the occurrence of the Lagrangian pure shear stress \eqref{3-3}.

Note that each of the two laws of motion \eqref{1-6} with the sets of constraints \eqref{3-1} and \eqref{1-7} or \eqref{3-1} and \eqref{1-8} is a one-parameter law. Thiel et al. \cite{ThielIJNLM2019} have proposed a single dimensionless parameter $\alpha \in \mathds{R}$ as such a parameter and the following expressions for the quantities $a$, $b$, and $c$ ($d=1$):
\begin{description}
  \item[(i)] under the constraints $\eqref{3-1}_1$ and \eqref{1-7},
\begin{equation}\label{3-4}
  a(\alpha):= \frac{1}{\sqrt{\cosh (2\alpha)}},\quad\quad b(\alpha):= \frac{\sinh (2\alpha)}{\sqrt{\cosh (2\alpha)}},\quad\quad c(\alpha):= \sqrt{\cosh (2\alpha)};
\end{equation}
  \item[(ii)] under the constraints $\eqref{3-1}_1$ and \eqref{1-8},
\begin{equation}\label{3-5}
  a(\alpha):= \sqrt{\cosh (2\alpha)},\quad\quad b(\alpha):= \frac{\sinh (2\alpha)}{\sqrt{\cosh (2\alpha)}},\quad\quad c(\alpha):= \frac{1}{\sqrt{\cosh (2\alpha)}}.
\end{equation}
\end{description}
Following \cite{ThielIJNLM2019}, homogeneous deformations of samples with the law of motion \eqref{1-6} for $d=1$ and the sets of the parameters $a$, $b$, and $c$ \eqref{3-4} and \eqref{3-5} will be called \emph{left finite simple shear} (LFSS) and \emph{right finite simple shear} (RFSS) deformations, respectively. Configurations of an initially cubic sample with sides of length 1 for $\alpha = 0.5$ deformed in this way under plane strain conditions are shown in Fig. \ref{f4}.
\begin{figure}
\begin{center}
\includegraphics{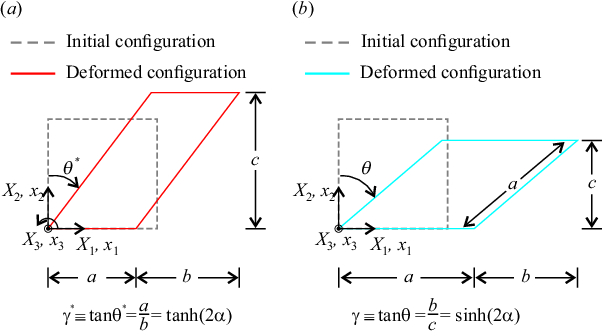}
\end{center}
\caption{ LFSS (\emph{a}) and RFSS (\emph{b}) deformations of an initially cubic sample with sides of length 1 under plane strain conditions for $\alpha = 0.5$.}
\label{f4}
\end{figure}
LFSS and RFSS deformations of an initially cubic sample with sides of length 1 under plane strain conditions for $\alpha = 0.5,\,1.0,\,1.5$ are shown in Fig. \ref{f5}.
\begin{figure}
\begin{center}
\includegraphics{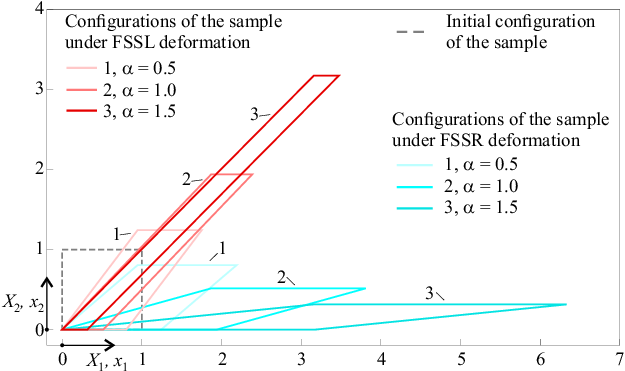}
\end{center}
\caption{ LFSS (\emph{a}) and RFSS (\emph{b}) deformations of an initially cubic sample with sides of length 1 under plane strain conditions for $\alpha = 0.5,\,1.0,\,1.5$}
\label{f5}
\end{figure}
We determine the shear angles $\theta^{\ast}$ and $\theta$ (see Fig. \ref{f4}) and their corresponding standard shear parameters $\gamma^{\ast}$ and $\gamma$
\begin{equation*}
  \gamma^{\ast} \equiv \tan \theta^{\ast}=\frac{b}{c} = \tanh (2\alpha),\quad\quad \gamma \equiv \tan \theta=\frac{b}{c} = \sinh (2\alpha).
\end{equation*}
Values of the parameters $a$, $b$, and $c$, the shear angles $\theta^{\ast}$ and $\theta$ and their corresponding standard shear parameters $\gamma^{\ast}$ and $\gamma$ are given in Tables \ref{t6} and \ref{t7}.
\begin{table}
\caption{Values of the parameters $a$, $b$, and $c$, the shear angle $\theta^{\ast}$, and their corresponding standard shear parameter $\gamma^{\ast}$ for LFSS deformation}
\label{t6}
\scriptsize
\begin{tabular}{l|lllll}
\hline\noalign{\smallskip}
 $\alpha$ [-]  &  $\theta^{\ast}$ [deg.]     & $\gamma^{\ast}$ [-] & $a$ [-]  & $b$ [-]   &  $c$ [-] \\
\noalign{\smallskip}\hline\noalign{\smallskip}
   0.5         & 37.29                       & 0.7616              & 0.8050   & 0.9461    & 1.2422   \\
   1.0         & 43.95                       & 0.9640              & 0.5156   & 1.8699    & 1.9396   \\
   1.5         & 44.86                       & 0.9951              & 0.3152   & 3.1573    & 3.1730   \\
\noalign{\smallskip}\hline
\end{tabular}
\end{table}

\begin{table}
\caption{Values of the parameters $a$, $b$, and $c$, the shear angle $\theta$, and their corresponding standard shear parameter $\gamma$ for RFSS deformation}
\label{t7}
\scriptsize
\begin{tabular}{l|lllll}
\hline\noalign{\smallskip}
 $\alpha$ [-]  &  $\theta$ [deg.] & $\gamma$ [-] & $a$ [-]  & $b$ [-]  &  $c$ [-] \\
\noalign{\smallskip}\hline\noalign{\smallskip}
   0.5         &  49.61           & 1.1752       & 1.2422   & 0.9461    & 0.8050  \\
   1.0         &  74.59           & 3.6269       & 1.9396   & 1.8699    & 0.5156  \\
   1.5         &  84.30           & 10.018       & 3.1730   & 3.1573    & 0.3152  \\
\noalign{\smallskip}\hline
\end{tabular}
\end{table}

Since $\cosh (2 \alpha)\geq 1$ for any values of the parameter $\alpha$, it follows from \eqref{3-4} and \eqref{3-5} that the inequalities $a\leq 1$ and $c\geq 1$ are valid for LFSS deformations and the inequalities $a\geq 1$ and $c\leq 1$ hold for RFSS deformations, which agrees with the data presented in Tables \ref{t6} and \ref{t7}. In addition, the shear angles $\theta^{\ast}$ and $\theta$ and their corresponding standard shear parameters $\gamma^{\ast}$ and $\gamma$ have the following limiting values:
\begin{equation}\label{3-6a}
  \lim \limits_{\alpha \to \infty} \theta^{\ast}(\alpha)= 45^{\circ},\quad \lim \limits_{\alpha \to \infty} \gamma^{\ast}(\alpha) = 1, \quad
  \lim \limits_{\alpha \to \infty} \theta(\alpha)= 90^{\circ},\quad \lim \limits_{\alpha \to \infty} \gamma(\alpha) = \infty.
\end{equation}

Note that for simple shear deformations, it is convenient to use as the shear parameter the quantity $\gamma^s\equiv \tan \theta$ (see Fig. \ref{f2},\emph{b}), which varies similarly to the  parameter $\gamma$ in RFSS deformation. Unfortunately, for LFSS deformations, the quantity $\gamma^{\ast}$ cannot be used as the shear parameter in the same ranges of variation (see Table \ref{t6} and Eq. $\eqref{3-6a}_2$). In this paper, we choose the parameter $\alpha\in [0,\,\infty)$ as the shear parameter and use only its non-negative values, since negative values do not lead to any fundamentally new solutions. To maintain consistency of the procedure of testing samples for finite simple shear and simple shear deformations, we choose the main range of the parameter $\alpha\in [0,\,1.5]$, since for $\alpha=1.5$,  the value of the parameter $\gamma\approx 10$ for RFSS deformations (see Table \ref{t7}). We used a similar value for the parameter $\gamma^s\in [0,\,10]$ in testing samples under simple shear deformation conditions in previous studies (see, e.g., \cite{Korobeynikov2023,KorobeynikovZAMM2024}).

The law of motion \eqref{1-6} with $d=1$ corresponds to 2D plane strain deformation. Since this law of motion subject to expressions \eqref{3-4} and \eqref{3-5} corresponds to isochoric deformation, it follows that for the completely imposed kinematics in the form of the considered law of motion for samples of even compressible material, the Cauchy stress tensor components are determined, as for incompressible material samples, with accuracy up to an arbitrary spherical constituent of this tensor. To uniquely determine the Cauchy stress tensor components, it is required to determine this constituent by specifying additional boundary conditions for the normal components of the Cauchy stress tensor in some regions of the sample surface. For example, for a simple shear deformation of a sample, it is assumed that, along with plane strain conditions, either a plane stress state occurs in the sample, or the normal pressure on the lateral side of the sample is specified (see, e.g., \cite{HorganJElast2010}). In this study, we only use material models for which isochoric plane-strain deformation simultaneously satisfy the plane stress condition, i.e.,
\begin{equation*}
  \sigma_{33} = \bar{\sigma}_{33}=0.
\end{equation*}
We use these conditions and obtain unique values of the Cauchy stress tensor components for the finite simple shear deformations considered. Therefore, below we consider only 2D plane strain, which significantly simplifies further calculations.

Following \cite{ThielIJNLM2019}, we obtain two expressions of the deformation gradient $\mathbf{F}$ for LFSS and RFSS deformations using the expressions \eqref{3-4} and \eqref{3-5}
\begin{equation}\label{3-8}
  \mathbf{F}_L = \frac{1}{\sqrt{\cosh (2\alpha)}}\left[
                                                   \begin{array}{cc}
                                                     1 & \sinh (2\alpha) \\
                                                     0 & \cosh (2\alpha) \\
                                                   \end{array}
                                                 \right],\quad
  \mathbf{F}_R = \frac{1}{\sqrt{\cosh (2\alpha)}}\left[
                                                   \begin{array}{cc}
                                                     \cosh (2\alpha) & \sinh (2\alpha) \\
                                                            0        &      1          \\
                                                   \end{array}
                                                 \right],
\end{equation}
which, using the polar decomposition \eqref{2-5}, can be represented as  (cf., \cite{NeffIJES2014})
\begin{equation}\label{3-9}
  \mathbf{F}_L = \mathbf{V}_L\cdot \mathbf{R},\quad\quad \mathbf{F}_R = \mathbf{R} \cdot \mathbf{U}_R,
\end{equation}
where $\mathbf{V}_L$ and $\mathbf{U}_R$ are the symmetric positive definite left (Eulerian) and right (Lagrangian) stretch tensors for LFSS and RFSS deformations, respectively, and $\mathbf{R}$ is the polar rotation tensor, identical for both decompositions \eqref{3-9}. Expression for these tensors are presented in \cite{ThielIJNLM2019}
\begin{equation}\label{3-10}
  \mathbf{V}_L = \mathbf{U}_R = \left[
                                                   \begin{array}{cc}
                                                     \cosh (\alpha) & \sinh (\alpha) \\
                                                     \sinh (\alpha) & \cosh (\alpha) \\
                                                   \end{array}
                                                 \right],\quad
  \mathbf{R} = \frac{1}{\sqrt{\cosh (2\alpha)}}\left[
                                                   \begin{array}{cc}
                                                     \cosh (\alpha)  & \sinh (\alpha) \\
                                                     -\sinh (\alpha) & \cosh (\alpha) \\
                                                   \end{array}
                                                 \right].
\end{equation}

Since, for finite simple shear deformations, the two principal stretches are different when $\alpha \neq 0$, the stretch tensors $\mathbf{V}_L$ and $\mathbf{U}_R$ can be represented  in classical spectral form with a unique choice of eigenvectors (corresponding to the principal directions)
\begin{equation*}
  \mathbf{V}_L = \lambda_1 \mathbf{n}_1\otimes \mathbf{n}_1 + \lambda_2 \mathbf{n}_2\otimes \mathbf{n}_2,\quad\quad \mathbf{U}_R = \lambda_1 \mathbf{N}_1\otimes \mathbf{N}_1 + \lambda_2 \mathbf{N}_2\otimes \mathbf{N}_2,
\end{equation*}
where $\lambda_1$ and $\lambda_2$ are the eigenvalues (principal stretches) and $\mathbf{n}_1$, $\mathbf{n}_2$ and $\mathbf{N}_1$, $\mathbf{N}_2$ are the subordinate eigenvectors of the tensors $\mathbf{V}_L$ and $\mathbf{U}_R$, respectively \cite{LinZAMP2024,ThielIJNLM2019}
\begin{equation}\label{3-12}
  \lambda_1 = \mathrm{e}^{\alpha},\quad\quad \lambda_2 = \mathrm{e}^{-\alpha},\quad\quad \mathbf{n}_1=\mathbf{N}_1 = \frac{\sqrt{2}}{2}\left[
                                                                                                                               \begin{array}{c}
                                                                                                                                 1 \\
                                                                                                                                 1 \\
                                                                                                                               \end{array}
                                                                                                                             \right],\quad\quad
   \mathbf{n}_2=\mathbf{N}_2 = \frac{\sqrt{2}}{2}\left[
                                                  \begin{array}{c}
                                                    -1 \\
                                                    1  \\
                                                 \end{array}
                                                \right].
\end{equation}
Similarly, the stretch tensors $\mathbf{V}_L$ and $\mathbf{U}_R$ can be represented in basis-free spectral form using the eigenprojections of these tensors
\begin{equation}\label{3-13}
  \mathbf{V}_L = \lambda_1 \mathbf{V}_1^L + \lambda_2 \mathbf{V}_2^L,\quad\quad\quad \mathbf{U}_R = \lambda_1 \mathbf{U}_1^R + \lambda_2 \mathbf{U}_2^R,
\end{equation}
where the eigenvalues $\lambda_1$ and $\lambda_2$ are given in \eqref{3-12} and the subordinate eigenprojections have the form
\begin{equation}\label{3-14}
  \mathbf{V}^L_1=\mathbf{U}^R_1 = \mathbf{n}_1 \mathbf{n}_1^T = \mathbf{N}_1 \mathbf{N}_1^T = \frac{1}{2}\left[
                                                                                                           \begin{array}{cc}
                                                                                                             1 & 1 \\
                                                                                                             1 & 1 \\
                                                                                                           \end{array}
                                                                                                         \right],\quad
\mathbf{V}^L_2=\mathbf{U}^R_2 = \mathbf{n}_2 \mathbf{n}_2^T = \mathbf{N}_2 \mathbf{N}_2^T = \frac{1}{2}\left[
                                                                                                           \begin{array}{cc}
                                                                                                             1  & -1 \\
                                                                                                             -1 &  1 \\
                                                                                                           \end{array}
                                                                                                         \right].
\end{equation}
Of course, the same expressions for the eigenprojections can be derived using Sylvester's formula \eqref{2-3}.

In what follows, we will seek material models for which finite simple shear deformations lead to Eulerian or Lagrangian pure shear stresses. The following two propositions  simplify the search for these material models.\\

\begin{proposition}
\label{Pr:3-1}
If an LFSS (RFSS) deformation of a sample of any material leads to an Eulerian (Lagrangian) pure shear stress of the form \eqref{1-1} (\eqref{3-3}), the same deformation cannot lead to a Lagrangian (Eulerian) pure stress of the form \eqref{3-3} (\eqref{1-1}).
\end{proposition}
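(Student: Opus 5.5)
The argument is a direct computation with the rotation tensor $\mathbf{R}$ from \eqref{3-10}, which is the same for LFSS and RFSS deformations, together with the relation $\bar{\boldsymbol{\sigma}}=\mathbf{R}^T\cdot\boldsymbol{\sigma}\cdot\mathbf{R}$. This relation follows from Definition \ref{def:2-2}; the text's $\bar{\boldsymbol{\sigma}}\equiv\mathbf{R}\cdot\boldsymbol{\sigma}\cdot\mathbf{R}^T$ has the transposition reversed, but either convention gives the same conclusion. No constitutive assumption is needed, so the statement holds for ``any material''. Suppose $\boldsymbol{\sigma}$ has the Eulerian pure shear form \eqref{1-1} with $\sigma_{12}=s\neq 0$. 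We show that $\bar{\boldsymbol{\sigma}}$ cannot simultaneously have the form \eqref{3-3} when $\alpha\neq0$. The RFSS/Lagrangian case is symmetric: we assume \eqref{3-3} and compute $\boldsymbol{\sigma}=\mathbf{R}\cdot\bar{\boldsymbol{\sigma}}\cdot\mathbf{R}^T$.

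The key step is to write the in-plane part of $\mathbf{R}$ as a planar rotation by an angle $\beta$. From \eqref{3-10},
\[
\cos\beta=\frac{\cosh\alpha}{\sqrt{\cosh(2\alpha)}},\qquad \sin\beta=-\frac{\sinh\alpha}{\sqrt{\cosh(2\alpha)}} .
\]
Conjugating the symmetric traceless matrix $s(\mathbf{k}_1\otimes\mathbf{k}_2+\mathbf{k}_2\otimes\mathbf{k}_1)$ by a rotation through $\pm\beta$ gives
\[
s\left[\begin{array}{cc} \pm\sin 2\beta & \cos2\beta\\ \cos2\beta & \mp\sin2\beta\end{array}\right],
\]
and the $33$ component stays zero. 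The diagonal entries of $\bar{\boldsymbol{\sigma}}$ therefore vanish if and only if $s\sin2\beta=0$.

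We have $\sin2\beta=2\sin\beta\cos\beta=-\sinh(2\alpha)/\cosh(2\alpha)=-\tanh(2\alpha)$, which is nonzero for $\alpha\neq0$. Hence $\bar\sigma_{11}=-\bar\sigma_{22}=\pm s\tanh(2\alpha)\neq0$, and $\bar{\boldsymbol{\sigma}}$ is not of the form \eqref{3-3}. The only way both forms can hold is $s=0$, the trivial zero-stress state, or $\alpha=0$, the undeformed configuration. We therefore state the result for a nontrivial stress and $\alpha>0$, which is implicit in the proposition.

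The main obstacle is not computational. It is making the hypotheses precise: the proposition must exclude the zero stress and the reference configuration, and it must fix the rotation convention relating $\boldsymbol{\sigma}$ and $\bar{\boldsymbol{\sigma}}$. Once those are settled, the $\tanh(2\alpha)$ factor decides the matter. The inverse RFSS case follows by the same computation with $\mathbf{R}$ replaced by $\mathbf{R}^T$.
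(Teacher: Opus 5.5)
Your proposal is correct and is essentially the paper's proof: the paper also conjugates the pure shear $\boldsymbol{\sigma}$ by the rotation $\mathbf{R}$ from \eqref{3-10}, which is common to LFSS and RFSS, and obtains the diagonal entries $\mp\sigma_{12}\tanh(2\alpha)$ in \eqref{3-18}; your rotation-angle parametrization by $\beta$ is just a repackaging of the same hyperbolic identities. Your explicit exclusion of the trivial cases $s=0$ and $\alpha=0$ is a worthwhile sharpening, since the paper leaves $\sigma_{12}\neq 0$ implicit.
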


\begin{proof}
We prove the proposition for LFSS deformations; for RFSS deformations, the proposition is proved similarly. Let the Cauchy stress tensor for the LFSS deformation have the form \eqref{1-1}, or, in the 2D representation, the form
\begin{equation*}
   \boldsymbol{\sigma} = \left[                                                                                                                                                                   \begin{array}{cc}
      0           & \sigma_{12}  \\
      \sigma_{12} & 0            \\
   \end{array}
   \right].
\end{equation*}
The rotated Cauchy stress tensor is given by
\begin{equation*}
  \bar{\boldsymbol{\sigma}}=\mathbf{R}^T\cdot \boldsymbol{\sigma}\cdot \mathbf{R}.
\end{equation*}
Using the expression $\eqref{3-10}_2$ for the tensor $\mathbf{R}$ and the trigonometric identities
\begin{equation*}
  2\sinh (\alpha)\cosh (\alpha)=\sinh (2\alpha),\quad\quad\quad   \cosh^2 (\alpha) - \sinh^2 (\alpha)=1,
\end{equation*}
we obtain the expression
\begin{equation}\label{3-18}
  \mathbf{R}^T\cdot \boldsymbol{\sigma}\cdot \mathbf{R} = \frac{\sigma_{12}}{\cosh (2\alpha)}\left[                                                                                                                                                                   \begin{array}{cc}
      -\sinh (2\alpha) & 1                \\
      1                & \sinh (2\alpha)  \\
   \end{array}
   \right].
\end{equation}
Since, for $\alpha >0$, we have $\sinh (2\alpha)>0$, it follows from \eqref{3-18} that for LFSS deformations corresponding to Eulerian pure shear stresses for $\alpha >0$, the following inequality holds:
\begin{equation*}
   \bar{\boldsymbol{\sigma}} \neq \left[                                                                                                                                                                   \begin{array}{cc}
      0           & \bar{\sigma}_{12}  \\
      \bar{\sigma}_{12} & 0            \\
   \end{array}
   \right];
\end{equation*}
hence the proposition is proved.
\end{proof}

\begin{proposition}
\label{Pr:3-2}
For finite simple shear deformations of a sample of any isotropic Cauchy/Green elastic material, the equality
\begin{equation}\label{3-20}
  \boldsymbol{\sigma}(\mathbf{V}_L)= \bar{\boldsymbol{\sigma}}(\mathbf{U}_R)
\end{equation}
holds for any $\alpha >0$.
\end{proposition}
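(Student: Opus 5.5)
The plan is to use the representation theorem for isotropic Cauchy/Green elastic materials and then invoke the identity $\mathbf{V}_L=\mathbf{U}_R$ established in \eqref{3-10}.

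\emph{Step 1: Eulerian representation.} For an isotropic Cauchy elastic material, the Cauchy stress is an isotropic tensor function of the left stretch tensor alone. That is, $\boldsymbol{\sigma}=\boldsymbol{\mathcal{G}}(\mathbf{V})$, and the response function satisfies
\begin{equation*}
\boldsymbol{\mathcal{G}}(\mathbf{Q}\cdot\mathbf{V}\cdot\mathbf{Q}^T)=\mathbf{Q}\cdot\boldsymbol{\mathcal{G}}(\mathbf{V})\cdot\mathbf{Q}^T
\end{equation*}
for every $\mathbf{Q}\in\mathcal{T}^{2\,+}_{\text{orth}}$. This holds because frame indifference removes the dependence on $\mathbf{R}$ in $\mathbf{F}=\mathbf{V}\cdot\mathbf{R}$, and isotropy then gives the equivariance.

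\emph{Step 2: Lagrangian representation.} Rotating back, I will show that the rotated Cauchy stress is the same function of $\mathbf{U}$. Using \eqref{2-5}, $\mathbf{V}=\mathbf{R}\cdot\mathbf{U}\cdot\mathbf{R}^T$, so
\begin{equation*}
\bar{\boldsymbol{\sigma}}=\mathbf{R}^T\cdot\boldsymbol{\mathcal{G}}(\mathbf{R}\cdot\mathbf{U}\cdot\mathbf{R}^T)\cdot\mathbf{R}=\boldsymbol{\mathcal{G}}(\mathbf{U}).
\end{equation*}
For isotropic Green elastic (hyperelastic) materials this is consistent, since they are a subclass of Cauchy elastic ones. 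Equivalently, one can use the spectral form $\boldsymbol{\sigma}=\sum_i \sigma_i(\lambda_1,\lambda_2,\lambda_3)\mathbf{V}_i$ and $\bar{\boldsymbol{\sigma}}=\sum_i\sigma_i(\lambda_1,\lambda_2,\lambda_3)\mathbf{U}_i$, with the same coefficient functions, in the spirit of \eqref{2-42}.

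\emph{Step 3: Apply to finite simple shear.} For LFSS, $\boldsymbol{\sigma}=\boldsymbol{\mathcal{G}}(\mathbf{V}_L)$. For RFSS, $\bar{\boldsymbol{\sigma}}=\boldsymbol{\mathcal{G}}(\mathbf{U}_R)$. By \eqref{3-10}, $\mathbf{V}_L=\mathbf{U}_R$ as matrices in the common basis $\{\mathbf{k}_i\}$. In the full 3D setting, the third direction has $\lambda_3=1$ in both cases. Hence the two stresses coincide, which gives \eqref{3-20}. The spectral route makes this transparent: the eigenvalues \eqref{3-12} and the eigenprojections \eqref{3-14} are identical for $\mathbf{V}_L$ and $\mathbf{U}_R$.

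\emph{Expected obstacle.} The only delicate point is being careful about which rotation convention defines $\bar{\boldsymbol{\sigma}}$, namely $\mathbf{R}^T\cdot\boldsymbol{\sigma}\cdot\mathbf{R}$ as used in the proof of Proposition \ref{Pr:3-1}. I also need to check that the constitutive function is genuinely the same in both deformations. Since the material is fixed, the isochoric indeterminate pressure is fixed by the same plane-stress condition $\sigma_{33}=\bar\sigma_{33}=0$. These third components agree because the $3$-direction is unaffected by $\mathbf{R}$, so the spherical part is chosen identically in the two problems.
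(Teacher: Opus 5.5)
Your proposal is correct and is essentially the paper's proof. The paper invokes the standard representation $\boldsymbol{\sigma}=\mathbf{f}(\mathbf{V})\Leftrightarrow\bar{\boldsymbol{\sigma}}=\mathbf{f}(\mathbf{U})$ with an isotropic tensor function $\mathbf{f}$ and then uses $\mathbf{V}_L=\mathbf{U}_R$ from \eqref{3-10}, which is your Steps 1--3; you also derive the Lagrangian form explicitly and check that the plane-stress condition fixes any spherical part identically in both problems.

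One small slip in Step 1: in $\mathbf{F}=\mathbf{V}\cdot\mathbf{R}$ the rotation sits on the right, so it is isotropy ($\mathbf{F}\mapsto\mathbf{F}\cdot\mathbf{Q}$), not frame indifference, that removes it. The equivariance of $\boldsymbol{\mathcal{G}}$ needs both principles together, but the representation you end up with is the correct standard one.
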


\begin{proof}
For the material considered in the proposition, the equality (see, e.g., \cite{Ogden1984})
\begin{equation}\label{3-21}
  \boldsymbol{\sigma}=\mathbf{f}(\mathbf{V})\quad \Leftrightarrow \quad \bar{\boldsymbol{\sigma}}=\mathbf{f}(\mathbf{U})
\end{equation}
holds, where $\mathbf{f}$ is an isotropic tensor function of its tensorial argument. Since, for any $\alpha$, the expression $\eqref{3-10}_1$ is valid, it follows from equality \eqref{3-21} that equality \eqref{3-20} is also valid.
\end{proof}

\begin{corollary}
\label{Cor:3-2-1}
If, for any isotropic Cauchy/Green elastic material model, LFSS (RFSS) deformations lead to Eulerian (Lagrangian) pure shear stress states, then RFSS (LFSS) deformations lead to Lagrangian (Eulerian) pure shear stress states subject to the equality
\begin{equation}\label{3-22}
   \boldsymbol{\sigma}(\mathbf{V}_L)= \bar{\boldsymbol{\sigma}}(\mathbf{U}_R) = \left[                                                                                                                                                                   \begin{array}{cc}
      0   & s  \\
      s   & 0  \\
   \end{array}
   \right],
\end{equation}
where $s\equiv \sigma_{12}=\bar{\sigma}_{12}$.
\end{corollary}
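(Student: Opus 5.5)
The corollary follows almost at once from Proposition \ref{Pr:3-2}, and I would argue by direct substitution. Take the LFSS case first. Suppose that, for a given isotropic Cauchy/Green elastic material, the LFSS deformation produces an Eulerian pure shear stress for every $\alpha>0$. In the 2D plane strain representation, with $\sigma_{33}=0$ by the plane-stress convention adopted above, this means
\begin{equation*}
\boldsymbol{\sigma}(\mathbf{V}_L)=\left[\begin{array}{cc} 0 & \sigma_{12}\\ \sigma_{12} & 0\end{array}\right].
\end{equation*}
Here $\mathbf{V}_L$ is given by $\eqref{3-10}_1$.

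Next I would invoke Proposition \ref{Pr:3-2}. Its content is that, by \eqref{3-21}, the same isotropic tensor function $\mathbf{f}$ gives $\boldsymbol{\sigma}=\mathbf{f}(\mathbf{V})$ and $\bar{\boldsymbol{\sigma}}=\mathbf{f}(\mathbf{U})$. By $\eqref{3-10}_1$, the two arguments coincide as matrices, $\mathbf{V}_L=\mathbf{U}_R$. Hence $\bar{\boldsymbol{\sigma}}(\mathbf{U}_R)=\boldsymbol{\sigma}(\mathbf{V}_L)$ componentwise, so the rotated Cauchy stress under RFSS deformation has exactly the form \eqref{3-3}, with $\bar{\sigma}_{12}=\sigma_{12}$. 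Setting $s\equiv\sigma_{12}=\bar\sigma_{12}$ gives \eqref{3-22}.

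The RFSS direction is symmetric. Assume the Lagrangian pure shear form holds for $\bar{\boldsymbol{\sigma}}(\mathbf{U}_R)$ and read equality \eqref{3-20} from right to left. This shows that $\boldsymbol{\sigma}(\mathbf{V}_L)$ is an Eulerian pure shear stress with the same off-diagonal component.

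I do not expect a real obstacle. The one point that needs care is that the rotated stress must be computed with the rotation belonging to each deformation. For LFSS the relevant tensor is $\mathbf{V}_L$, and for RFSS it is $\mathbf{U}_R$ with $\bar{\boldsymbol{\sigma}}=\mathbf{R}^T\boldsymbol{\sigma}\mathbf{R}$. Proposition \ref{Pr:3-2} already packages this correctly, because the constitutive equations for isotropic Cauchy elasticity give $\boldsymbol{\sigma}$ and $\bar{\boldsymbol{\sigma}}$ as functions of the stretch tensors alone. The rotation $\mathbf{R}$, which is common to both decompositions in \eqref{3-9}, therefore never enters explicitly.

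The plane-stress component ($\sigma_{33}=\bar\sigma_{33}=0$) and the $(1,3)$, $(2,3)$ components also transfer automatically, since the identity holds for the full 3D tensors.
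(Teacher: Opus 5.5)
Your argument is correct and matches the paper's: the corollary is read off directly from Proposition \ref{Pr:3-2}. You use $\boldsymbol{\sigma}=\mathbf{f}(\mathbf{V})$, $\bar{\boldsymbol{\sigma}}=\mathbf{f}(\mathbf{U})$ and $\mathbf{V}_L=\mathbf{U}_R$ from $\eqref{3-10}_1$, so the pure-shear form and the common value $s$ carry over componentwise in either direction, and your extra remarks on the out-of-plane components and on the shared rotation $\mathbf{R}$ are consistent with this.
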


\section{Testing some isotropic hyper-/hypo-elastic material models}
\label{sec:4}

In this Section, we test Hooke-like isotropic hyper-/hypo-elastic material models with the constitutive relations presented in Section \ref{sec:2-3} under the LFSS/RFSS deformations defined in Section \ref{sec:3}. Some Hill's linear isotropic hyperelastic material models are tested in Section \ref{sec:4-1}, and some one- or two-power Ogden's mode in Section \ref{sec:4-2}. Finally, some Hooke-like isotropic hypoelastic material models are tested in Section \ref{sec:4-3}.

\subsection{Testing some HLIH material models based on SP strain functions}
\label{sec:4-1}

\begin{proposition}
\label{Pr:4-1}
For any material model from the HLIH family based on SP strain tensors with a scale function $f(\lambda)$, LFSS and RFSS deformations lead to Eulerian and Lagrangian pure shear stresses, respectively, of the form \eqref{3-22}, where
\begin{equation}\label{4-1}
  s(\alpha)=2 \mu f(\lambda_1)f^{\prime}(\lambda_1)\lambda_1,\quad\quad \lambda_1=\mathrm{e}^{\alpha}.
\end{equation}
\end{proposition}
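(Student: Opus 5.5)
The plan is to compute the Kirchhoff stress directly from the spectral formula \eqref{2-42} and then use the symmetry property of SP scale functions. Since the motions are isochoric ($J=1$), we have $\boldsymbol{\tau}=\boldsymbol{\sigma}$ and $\bar{\boldsymbol{\tau}}=\bar{\boldsymbol{\sigma}}$. By Proposition \ref{Pr:3-2} and Corollary \ref{Cor:3-2-1}, it suffices to treat the LFSS case. Indeed, the HLIH models are isotropic Cauchy/Green elastic: by \eqref{2-42}, $\boldsymbol{\tau}$ is an isotropic function of $\mathbf{V}$, and $\bar{\boldsymbol{\tau}}$ is the same function of $\mathbf{U}$. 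The RFSS statement with the same $s$ then follows from $\mathbf{V}_L=\mathbf{U}_R$ in \eqref{3-10}. Throughout we work in the 2D plane-strain representation with $\lambda_3=1$, so $f_3=f(1)=0$.

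The key input is the defining property of symmetrically physical strain functions, $f(\lambda^{-1})=-f(\lambda)$. Differentiating gives $f^{\prime}(\lambda^{-1})\lambda^{-1}=f^{\prime}(\lambda)\lambda$. For the Ba\v{z}ant--Itskov functions \eqref{2-13} both identities can be checked directly. With $\lambda_1=\mathrm{e}^{\alpha}$ and $\lambda_2=\mathrm{e}^{-\alpha}=\lambda_1^{-1}$ from \eqref{3-12}, this yields $f_2=-f_1$, hence $f_1+f_2+f_3=0$. Consequently the Lam\'{e} parameter $\lambda$ drops out of \eqref{2-43}, and $g_1=2\mu f_1$, $g_2=-2\mu f_1$, $g_3=0$. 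Moreover $f^{\prime}(\lambda_2)\lambda_2=f^{\prime}(\lambda_1)\lambda_1$. Substituting into \eqref{2-42} gives
\begin{equation*}
\boldsymbol{\tau}=2\mu f(\lambda_1)f^{\prime}(\lambda_1)\lambda_1\,(\mathbf{V}_1^L-\mathbf{V}_2^L),
\end{equation*}
and the out-of-plane component $\tau_{33}=g_3 f^{\prime}(1)=0$, which is consistent with the plane stress requirement $\sigma_{33}=0$.

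Finally, by the eigenprojections \eqref{3-14}, $\mathbf{V}_1^L-\mathbf{V}_2^L$ is the matrix with zeros on the diagonal and ones off the diagonal. Hence $\boldsymbol{\sigma}(\mathbf{V}_L)$ has exactly the form \eqref{3-22} with $s$ given by \eqref{4-1}, and the identical computation with $\mathbf{U}_i^R$ gives $\bar{\boldsymbol{\sigma}}(\mathbf{U}_R)$.

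The main point needing care is the precise definition of the SP strain family, which is only cited from \cite{KorobeynikovJElast2019}. I will state and use the odd-symmetry property $f(1/\lambda)=-f(\lambda)$ explicitly, and derive the derivative identity from it. If the cited definition is phrased differently, for example via $\mathbf{e}(\mathbf{V}^{-1})=-\mathbf{e}(\mathbf{V})$, it reduces to the same scalar relation. Everything else is a direct substitution.
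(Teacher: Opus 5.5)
Your proposal is correct and takes essentially the same route as the paper. Both arguments use $\lambda_2=\lambda_1^{-1}$ and the SP property ($f(\lambda_2)=-f(\lambda_1)$, $f(\lambda_3)=0$) to remove the Lam\'{e} term in \eqref{2-43}, then use $f^{\prime}(\lambda_1)\lambda_1=f^{\prime}(\lambda_2)\lambda_2$ to reduce \eqref{2-42} to $2\mu f(\lambda_1)f^{\prime}(\lambda_1)\lambda_1(\mathbf{V}_1^L-\mathbf{V}_2^L)$, and obtain the RFSS case from Corollary \ref{Cor:3-2-1}. The only differences are small additions on your side: you derive the derivative identity by differentiating $f(\lambda^{-1})=-f(\lambda)$ rather than citing Eq.~(6) of \cite{KorobeynikovJElast2019}, and you check $\tau_{33}=0$ explicitly.
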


\begin{proof}
Since, for finite simple shear deformations, the equalities $\lambda_1=\mathrm{e}^{\alpha}$, $\lambda_2=\lambda_1^{-1}$ hold, the definition  of any SP strain function (cf., \cite{KorobeynikovJElast2019}) leads to
\begin{equation}\label{4-2}
  f(\lambda_2)=-f(\lambda_1),\quad\quad f(\lambda_3)=0,
\end{equation}
whence it follows that the first term on the right-hand side of $\eqref{2-43}_1$ vanishes. Substituting expression $\eqref{2-43}_1$ into $\eqref{2-42}_2$ and taking into account \eqref{4-2}, we obtain a 2D equality for the Cauchy stress tensor ($\boldsymbol{\tau}=\boldsymbol{\sigma}$ for the isochoric deformation used) under LFSS deformation:
\begin{equation}\label{4-3}
  \boldsymbol{\sigma}= 2 \mu [f(\lambda_1)f^{\prime}(\lambda_1)\lambda_1 \mathbf{V}_1^L + f(\lambda_2)f^{\prime}(\lambda_2)\lambda_2 \mathbf{V}_2^L].
\end{equation}
Using equality (6) from \cite{KorobeynikovJElast2019}
\begin{equation*}
  f^{\prime}(\lambda_1)\lambda_1=f^{\prime}(\lambda_2)\lambda_2,
\end{equation*}
which is valid for this material model, and equality $\eqref{4-2}_1$, from \eqref{4-3} we obtain the following expression for $\boldsymbol{\sigma}$:
\begin{equation}\label{4-5}
  \boldsymbol{\sigma}= 2 \mu f(\lambda_1)f^{\prime}(\lambda_1)\lambda_1\left[                                                                                                                  \begin{array}{cc}
      0   & 1  \\
      1   & 0  \\
   \end{array}
   \right] .
\end{equation}
Here we used the equality
\begin{equation}\label{4-6}
  \mathbf{V}_1^L - \mathbf{V}_2^L = \left[
   \begin{array}{cc}
      0   & 1  \\
      1   & 0  \\
   \end{array}
   \right] .
\end{equation}
Taking into account the expression for $\lambda_1$ from $\eqref{3-12}_1$ and comparing expression  \eqref{4-5} with the Eulerian version of expression \eqref{3-22}, we obtain equality \eqref{4-1}. The expression in \eqref{3-22} for the Lagrangian version follows from Corollary \ref{Cor:3-2-1}.
\end{proof}

We consider three HLIH models based on the SP strain tensors (the Hencky, Pelzer, and Mooney) generated by the scale functions $f^{(0)}(\lambda)$, $f^P(\lambda)$, and $f^M(\lambda)$ (see Table \ref{t1}). Following \cite{KorobeynikovJElast2019}, we call the corresponding models from the HLIH family the H, P, and M material models. The function $s(\alpha)$ is defined as follows:
\begin{itemize}
  \item for the H model (the Hencky isotropic hyperelastic material model).
  \begin{equation}\label{4-7}
    s(\alpha)= 2\mu\, \alpha,
  \end{equation},
  \item for the P model (the Pelzer isotropic hyperelastic material model),
  \begin{equation}\label{4-8}
    s(\alpha)= \frac{\mu}{2}(\mathrm{e}^{2\alpha}-\mathrm{e}^{-2\alpha}),
  \end{equation},
  \item for the M model (the Mooney isotropic hyperelastic material model),
  \begin{equation}\label{4-9}
    s(\alpha)= \frac{\mu}{4}(\mathrm{e}^{4\alpha}-\mathrm{e}^{-4\alpha}).
  \end{equation}.
\end{itemize}

Thus, the Cauchy stress tensor components for an Eulerian pure shear stress state under LFSS deformation are given by
\begin{equation*}
  \sigma_{12}(\alpha) = s(\alpha),\quad\quad \sigma_{11}=\sigma_{22}=0,
\end{equation*}
and the rotated Cauchy stress tensor components for an Lagrangian pure shear stress state under RFSS deformation are written similarly:
\begin{equation}\label{4-11}
  \bar{\sigma}_{12}(\alpha) = s(\alpha),\quad\quad \bar{\sigma}_{11}=\bar{\sigma}_{22}=0.
\end{equation}
To obtain the Cauchy stress tensor components for this deformation, it is required to rotate the stress tensor $\bar{\boldsymbol{\sigma}}$
\begin{equation*}
  \boldsymbol{\sigma}(\alpha)=\mathbf{R}(\alpha)\cdot \bar{\boldsymbol{\sigma}}(\alpha)\cdot \mathbf{R}^T(\alpha).
\end{equation*}
Using expressions for the tensors $\bar{\boldsymbol{\sigma}}(\alpha)$ in \eqref{3-22} and $\mathbf{R}(\alpha)$ in $\eqref{3-10}_2$, we obtain the Cauchy stress tensor components for RFSS deformations
\begin{equation}\label{4-13}
  \sigma_{12}(\alpha) = \frac{s(\alpha)}{\cosh (2\alpha)},\quad\quad \sigma_{11}(\alpha)= s(\alpha)\tanh (2\alpha),\quad\quad \sigma_{22}=-\sigma_{11},
\end{equation}
where the function $s(\alpha)$ is determined for each of the models (H, P, and M) from \eqref{4-7}-\eqref{4-9}.

Plots of $\sigma_{12}(\alpha)$  obtained for the H, P, and M models under LFSS deformation are shown in Fig. \ref{f6},\emph{a}, and plots of $\sigma_{12}(\alpha)$ and $\sigma_{11}(\alpha)$ obtained for the same models under RFSS deformation are shown in Fig. \ref{f6},\emph{b}.
\begin{figure}
\begin{center}
\includegraphics{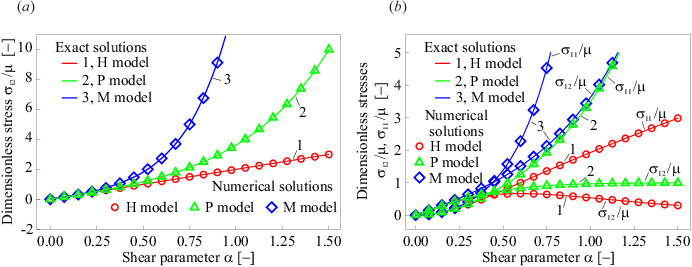}
\end{center}
\caption{Plots of the Cauchy stress tensor component $\sigma_{12}(\alpha)$ under LFSS deformation (\emph{a}) and the components $\sigma_{12}(\alpha)$ and $\sigma_{11}(\alpha)$ of the Cauchy stress tensor under RFSS deformation (\emph{b}) for the H, P, and M models (see Table \ref{t2}).}
\label{f6}
\end{figure}
The markers in the figures show the results of computer simulations for both types of deformations using the commercial FE system MSC.Marc, into which we implemented these three material models via Hypela2 user subroutine in a total Lagrangian formulation (for details, see \cite{KorobeynikovIJSS2022,KorobeynikovMTDM2024}).

\subsection{One- and two-power Ogden's models}
\label{sec:4-2}

We consider constitutive relations for the generalized Ogden's model \cite{KorobeynikovAAM2023} taking into account material compressibility that were obtained from constitutive relations for an incompressible material model \cite{KorobeynikovAAM2025} by replacing the term $-q\mathbf{I}$ ($q$ is the Lagrange multiplier) by the term $\lambda \log J\mathbf{I}$. Since, for isochoric deformation, $\log J=0$, it follows that constitutive relations for compressible material under this deformation can be formally derived from constitutive relations for incompressible material by equating the term $-q\mathbf{I}$ with the zero tensor, which can be done assuming that the plane stress and plane strain conditions for isochoric deformation are satisfied simultaneously. The latter is valid for the material models considered in this paper, since, for these models, the equality $\lambda_3=1$ (the plane strain condition) leads to the equality $\sigma_{33}=0$ (the plane stress condition). In view of the above and the fact that for the Cauchy and Kirchhoff stress tensors of isochoric deformation, $\boldsymbol{\tau}=\boldsymbol{\sigma}$ and $\bar{\boldsymbol{\tau}}=\bar{\boldsymbol{\sigma}}$, where $\bar{\boldsymbol{\sigma}}\equiv \mathbf{R}^T\cdot \boldsymbol{\sigma}\cdot \mathbf{R}$. Thus, for finite simple shear deformations, constitutive relations for the one-power Ogden's models (see Table \ref{t3}) are written as follows:
\begin{description}
  \item[(the Ogden-A model)] for the Ogden-A model,
\begin{equation}\label{4-14}
    \bar{\boldsymbol{\sigma}} = 2\mu\, \mathbf{E}^{(2)} \quad \Leftrightarrow \quad \boldsymbol{\sigma} = 2\mu\, \mathbf{e}^{(2)}.
\end{equation}
  \item[(the Ogden-B model)] for the Ogden-B model,
\begin{equation}\label{4-15}
    \bar{\boldsymbol{\sigma}} = 2\mu\, \mathbf{E}^{(-2)} \quad \Leftrightarrow \quad \boldsymbol{\sigma} = 2\mu\, \mathbf{e}^{(-2)}.
\end{equation}
\end{description}
Hereinafter, $\mathbf{E}^{(2)}$ and $\mathbf{E}^{(-2)}$ are Lagrangian strain tensors and $\mathbf{e}^{(2)}$ and $\mathbf{e}^{(-2)}$ are Eulerian ones (see Table~\ref{t1}).

We first test these models under LFSS deformation using Eulerian versions of constitutive relations. The expressions for the tensors $\mathbf{e}^{(2)}$ and $\mathbf{e}^{(-2)}$ given in Table~\ref{t1} lead to the equalities
\begin{equation}\label{4-16}
  \mathbf{e}^{(2)} = \frac{1}{2}(\mathbf{c}-\mathbf{I}),\quad\quad \mathbf{e}^{(-2)} = \frac{1}{2}(\mathbf{I}- \mathbf{c}^{-1}),
\end{equation}
where the tensors $\mathbf{c}$ and $\mathbf{c}^{-1}$ are defined in $\eqref{2-9}_2$ and $\eqref{2-10}_2$.

We define the tensor $\mathbf{c}$ using the expressions for the eigenprojections of the tensor $\mathbf{V}_L$ in \eqref{3-14}
\begin{equation*}
 \mathbf{c} = \mathbf{V}_L^2=\lambda_1^2\,\mathbf{V}^L_1 + \lambda_2^2\,\mathbf{V}^L_2 = \frac{\mathrm{e}^{2\alpha}}{2}\left[
                                                                                                                         \begin{array}{cc}
                                                                                                                           1 & 1 \\
                                                                                                                           1 & 1 \\
                                                                                                                         \end{array}
                                                                                                                       \right]
  + \frac{\mathrm{e}^{-2\alpha}}{2}\left[
                                        \begin{array}{cc}
                                          1 & -1 \\
                                          -1 & 1 \\
                                        \end{array}
                                      \right],
\end{equation*}
or
\begin{equation}\label{4-18}
 \mathbf{c} = \left[
                      \begin{array}{cc}
                        \cosh (2\alpha) & \sinh (2\alpha) \\
                        \sinh (2\alpha) & \cosh (2\alpha) \\
                      \end{array}
                    \right].
\end{equation}
Similarly, we obtain
\begin{equation*}
 \mathbf{c}^{-1} = \mathbf{V}_L^{-2}=\lambda_1^{-2}\,\mathbf{V}^L_1 + \lambda_2^{-2}\,\mathbf{V}^L_2 = \frac{\mathrm{e}^{-2\alpha}}{2}\left[
                                                                                                                         \begin{array}{cc}
                                                                                                                           1 & 1 \\
                                                                                                                           1 & 1 \\
                                                                                                                         \end{array}
                                                                                                                       \right]
  + \frac{\mathrm{e}^{2\alpha}}{2}\left[
                                        \begin{array}{cc}
                                          1 & -1 \\
                                          -1 & 1 \\
                                        \end{array}
                                      \right],
\end{equation*}
or
\begin{equation}\label{4-20}
 \mathbf{c}^{-1} = \left[
                      \begin{array}{cc}
                        \cosh (2\alpha)  & -\sinh (2\alpha) \\
                        -\sinh (2\alpha) &  \cosh (2\alpha) \\
                      \end{array}
                    \right].
\end{equation}

Using equalities \eqref{4-16}, \eqref{4-18}, and \eqref{4-20}, from \eqref{4-14} and \eqref{4-15} we obtain the following expressions for the components of the Cauchy stress tensor for the tested material models under LFSS deformation:
\begin{description}
    \item[(the Ogden-A model)] for the Ogden-A model,
\begin{equation}\label{4-21}
    \sigma_{12}(\alpha) = \mu \sinh (2\alpha),\quad\quad \sigma_{11}(\alpha) = \sigma_{22}(\alpha) = \mu[\cosh (2\alpha) - 1];
\end{equation}
   \item[(the Ogden-B model)] for the Ogden-B model,
\begin{equation}\label{4-22}
    \sigma_{12}(\alpha) = \mu \sinh (2\alpha),\quad\quad \sigma_{11}(\alpha) = \sigma_{22}(\alpha) = \mu[1 - \cosh (2\alpha)].
\end{equation}
\end{description}

The components $\bar{\sigma}_{ij}$ ($i,\,j=1,\,2$) for these material models under RFSS deformation are obtained using the statements of Proposition \ref{Pr:3-2},
\begin{equation*}
  \bar{\sigma}_{ij}(\alpha)=\sigma_{ij}(\alpha)\ (i,\,j=1,\,2),
\end{equation*}
where expressions for the quantities $\sigma_{ij}$ ($i,\,j=1,\,2$) are given in \eqref{4-21}, \eqref{4-22}. Applying rotation transformations, we obtain the components $\sigma_{ij}$ ($i,\,j=1,\,2$) of the Cauchy stress tensor for the Ogden-A and Ogden-B material models under RFSS deformation:
\begin{description}
  \item[(the Ogden-A model)] for the Ogden-A model,
\begin{equation*}
    \sigma_{12}(\alpha) = \mu \tanh (2\alpha),\quad \sigma_{11}(\alpha) = \mu\left[\frac{\cosh (4\alpha)}{\cosh (2\alpha)}-1\right],\quad
    \sigma_{22}(\alpha) = \mu \left[\frac{1}{\cosh (2\alpha)}-1\right];
\end{equation*}
  \item[(the Ogden-B model)] for the Ogden-B model,
\begin{equation*}\label{4-25}
    \sigma_{12}(\alpha) = \mu \tanh (2\alpha),\quad \sigma_{11}(\alpha) = \mu\left[1-\frac{1}{\cosh (2\alpha)}\right],\quad
    \sigma_{22}(\alpha) = \mu\left[1-\frac{\cosh (4\alpha)}{\cosh (2\alpha)}\right].
\end{equation*}
\end{description}

Plots of $\sigma_{12}(\alpha)$ and $\sigma_{11}(\alpha)=\sigma_{22}(\alpha)$ obtained for the Ogden-A,B models under LFSS deformation are shown in Fig. \ref{f7},(\emph{a}), and  plots of $\sigma_{12}(\alpha)$, $\sigma_{11}(\alpha)$, and $\sigma_{22}(\alpha)$ obtained  for the same models under RFSS deformation are shown in Fig. \ref{f7},(\emph{b}).
\begin{figure}
\begin{center}
\includegraphics{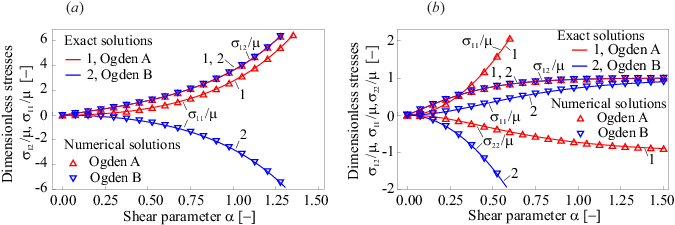}
\end{center}
\caption{Plots of the Cauchy stress tensor components for the Ogden-A,B models (see Table \ref{t3}) under LFSS (\emph{a}) and RFSS (\emph{b}) deformations.}
\label{f7}
\end{figure}

Based on the obtained expressions for the components of the Cauchy stress tensor for the Ogden-A,B models under LFSS and RFSS deformations, it can be concluded that, for these models, finite simple shear deformations lead neither to Eulerian nor to Lagrangian pure shear stresses. This assertion for the Ogden-A (neo-Hookean) model agrees with the similar assertion made for the same material model in \cite{ThielIJNLM2019}. Apparently, the assertion made here for two one-power Ogden type models is also valid for all material models of this type, except for the Hencky isotropic material model (i.e., for any of these models with the parameter $n\neq 0$).

Consider the following two-power Ogden's models (taking into account that the term $\lambda\, (\log J)\,\mathbf{I}=0$ for isochoric LFSS and RFSS deformations) for $n\neq 0$ (see Table \ref{t4}):
\begin{equation}\label{4-26}
    \bar{\boldsymbol{\sigma}} = \mu\, (\mathbf{E}^{(n)} +  \mathbf{E}^{(-n)})\quad \Leftrightarrow \quad \boldsymbol{\sigma} = \mu\, (\mathbf{e}^{(n)} + \mathbf{e}^{(-n)}).
\end{equation}
Since the expressions in parentheses on the right-hand sides of \eqref{4-26} are nothing more than doubled expressions for the Ba\v{z}ant--Itskov strain tensors belonging  to the family of SP strain tensors and generated by the scale function \eqref{2-13}, constitutive relations \eqref{4-26} can be rewritten as
\begin{equation}\label{4-27}
    \bar{\boldsymbol{\sigma}} = 2\mu\, \mathbf{E}_n\quad \Leftrightarrow \quad \boldsymbol{\sigma} = 2\mu\, \mathbf{e}_n,
\end{equation}
where
\begin{equation*}
    \mathbf{E}_n\equiv \frac{1}{2}(\mathbf{E}^{(n)} +  \mathbf{E}^{(-n)})\quad \Leftrightarrow \quad \mathbf{e}_n\equiv \frac{1}{2}(\mathbf{e}^{(n)} +  \mathbf{e}^{(-n)}).
\end{equation*}

We first test OBI models under LFSS deformation using Eulerian versions of their constitutive relations. From the expressions for the tensors $\mathbf{e}_n$ with scale functions from \eqref{2-13}, we obtain ($f_n(\lambda_3)=0$)
\begin{equation*}
  \mathbf{e}_n = f_n(\lambda_1)\,\mathbf{V}_1^L + f_n(\lambda_2)\,\mathbf{V}_2^L.
\end{equation*}
Equalities $\eqref{4-2}_1$ and \eqref{4-6} lead to the expression
\begin{equation}\label{4-30}
  \mathbf{e}_n = f_n(\lambda_1)\left[
   \begin{array}{cc}
      0   & 1  \\
      1   & 0  \\
   \end{array}
   \right] .
\end{equation}

Using expressions \eqref{4-27} and \eqref{4-30}, we find that for the OBI models, LFSS deformations lead to the Eulerian pure shear stress \eqref{3-22}, where ($n\neq 0$)
\begin{equation}\label{4-31}
  s(\alpha) = 2 \mu\ f_n(\lambda_1) = \frac{\mu}{n}(\mathrm{e}^{\alpha n} - \mathrm{e}^{-\alpha n}).
\end{equation}

In particular, we obtain expressions for the quantity $s(\alpha)$ for the following models:
\begin{itemize}
  \item for the OBI-P model (with the parameter $n=1$)
\begin{equation*}
  s(\alpha) = \mu (\mathrm{e}^{\alpha} - \mathrm{e}^{-\alpha});
\end{equation*}
  \item for the OBI-M model (with the parameter $n=2$)
\begin{equation}\label{4-33}
  s(\alpha) =  \frac{\mu}{2}(\mathrm{e}^{2 \alpha} - \mathrm{e}^{-2 \alpha}).
\end{equation}
\end{itemize}
Note that the expression for the quantity $s(\alpha)$ in \eqref{4-33} for the OBI-M model coincides with the expression \eqref{4-8} for the Pelzer model from the HLIH family.

The components of the Cauchy stress tensor for these material models tested under RFSS deformations have the form \eqref{4-13}, where the quantity $s(\alpha)$ is defined in \eqref{4-31}.

Plots of $\sigma_{12}(\alpha)$ obtained for the Hencky, OBI-P, and OBI-M models under LFSS deformation are shown in Fig. \ref{f8},\emph{a}, and plots of $\sigma_{12}(\alpha)$ and $\sigma_{11}(\alpha)$ obtained for the same models under RFSS deformation are shown in Fig. \ref{f8},\emph{b}.
\begin{figure}
\begin{center}
\includegraphics{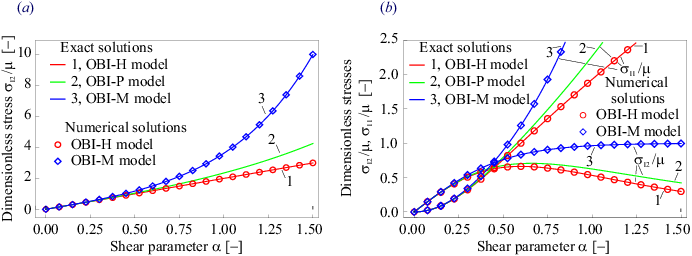}
\end{center}
\caption{Plots of the Cauchy stress tensor components for the Hencky (OBI-H), OBI-P, and OBI-M models (see Table \ref{t4}) under LFSS (\emph{a}) and RFSS (\emph{b}) deformations.}
\label{f8}
\end{figure}

The markers in Figs. \ref{f7} and \ref{f8} show the results of computer simulations for the Ogden-A,B models (Fig. \ref{f7}) and the OBI-M model (Fig. \ref{f8}) using the homemade Pioner FE system provided by the compressible Mooney--Rivlin material model with the updated Lagrangian formulation based on the Biezeno--Hencky stress rate (see Appendix \ref{sec:A}). The markers for the Hencky (OBI-H) model in Fig. \ref{f8} are the same as in Fig. \ref{f6} (i.e., these markers correspond to computer simulations using the commercial MSC.Marc FE system).

\subsection{Hooke-like isotropic hypoelastic material models}
\label{sec:4-3}

Since the Hypo-A and Hypo-B hypoelastic material models (see Table \ref{t5}) based on the upper and lower Oldroyd stress rates with constitutive relations \eqref{2-47b} and \eqref{2-48} under isochoric deformation  without initial stresses are rate forms of hyperelastic material models with constitutive relations \eqref{4-14} and \eqref{4-15}, respectively (cf., \cite{KorobeynikovJElast2021}) and were tested above in Section \ref{sec:4-2}, we do not repeat the solutions of these problems and focus on testing the Hypo-ZJ, Hypo-GN, Hypo-GS, and Hypo-log Hooke-like isotropic hypoelastic material models (see Table \ref{t5}) based on corotational stress rates.

For hypoelastic materials, unlike hyperelasticity models, the assertions of Proposition \ref{Pr:3-2} do not hold in this section; therefore, the Eulerian version of constitutive relations for hypoelastic materials is tested under LFSS deformation and their Lagrangian version is then tested under RFSS deformation.

Under isochoric deformations, constitutive relations for Hooke-like isotropic hypoelastic material models \eqref{2-49} reduce to the constitutive relations
\begin{equation}\label{4-34}
    \bar{\boldsymbol{\sigma}}^{\Omega} = 2\mu\, \widehat{\mathbf{D}} \quad \Leftrightarrow \quad \boldsymbol{\sigma}^{\omega} = 2\mu\, \mathbf{d}
\end{equation}
in Lagrangian and Eulerian versions, respectively. Here $\bar{\boldsymbol{\sigma}}^{\Omega}$ and $\boldsymbol{\sigma}^{\omega}$ are corotational rates of the Lagrangian tensor $\bar{\boldsymbol{\sigma}}$ and the Eulerian tensor $\boldsymbol{\sigma}$, related by rotation operations of the form \eqref{2-28}, so that (see \eqref{2-23} and \eqref{2-26})
\begin{equation}\label{4-35}
   \bar{\boldsymbol{\sigma}}^{\Omega}\equiv \dot{\bar{\boldsymbol{\sigma}}} + \bar{\boldsymbol{\sigma}}\cdot \boldsymbol{\Omega} - \boldsymbol{\Omega}\cdot \bar{\boldsymbol{\sigma}},\quad\quad
   \left(\frac{\text{D}^{\circ}}{\text{D}t}[\boldsymbol{\sigma}] = \right)\, \boldsymbol{\sigma}^{\omega}\equiv \dot{\boldsymbol{\sigma}} + \boldsymbol{\sigma}\cdot \boldsymbol{\omega} - \boldsymbol{\omega}\cdot \boldsymbol{\sigma};
\end{equation}
furthermore, the spin tensors $\boldsymbol{\Omega}$ and $\boldsymbol{\omega}$ are related by transformations \eqref{2-30}.

We specify constitutive relations $\eqref{4-35}_2$ for LFSS deformations. Consider the representation of the spin tensor $\boldsymbol{\omega}$ in the form $\eqref{2-34}_2$. Since, for this type of deformation, all principal stretches of the tensor $\mathbf{V}_L$ are different for $t>t_0$ ($\lambda_1=\mathrm{e}^{\alpha}$, $\lambda_2=\mathrm{e}^{-\alpha}$), expression \eqref{2-35} in the 2D version, in view of the equality $g_{21}=-g_{12}$, reduces to the expression
\begin{equation}\label{4-36}
  \boldsymbol{\Psi}_g (\mathbf{V},\mathbf{d})= g_{12}(\mathbf{V}_1^L\cdot \mathbf{d}\cdot \mathbf{V}_2^L - \mathbf{V}_2^L\cdot \mathbf{d}\cdot \mathbf{V}_1^L),\quad\quad g_{12}\equiv g(\lambda_1,\lambda_2).
\end{equation}

We find expressions for the stretching and vorticity tensors for LFSS deformations. The tensor $\mathbf{F}_L^{-1}$ is obtained from $\eqref{3-8}_1$:
\begin{equation*}
  \mathbf{F}_L^{-1} = \frac{1}{\sqrt{\cosh (2\alpha)}}\left[
                                                   \begin{array}{cc}
                                                     \cosh (2\alpha) & -\sinh (2\alpha) \\
                                                            0        &       1          \\
                                                   \end{array}
                                                 \right].
\end{equation*}
The velocity gradient tensor is obtained from expression $\eqref{2-15}_2$. A chain of equalities leads to the expression
\begin{equation}\label{4-38}
  \boldsymbol{\ell} = \dot{\alpha}\left[
                        \begin{array}{cc}
                          -\gamma &      2 \\
                          0       & \gamma \\
                        \end{array}
                      \right],
\end{equation}
where
\begin{equation*}
  \gamma \equiv \tanh (2\alpha).
\end{equation*}
Using decompositions $\eqref{2-16}_1$ of the tensor $\boldsymbol{\ell}$ into symmetric and skew-symmetric parts $\eqref{2-16}_{2,3}$, we obtain expressions for the stretching and vorticity tensors from \eqref{4-38}:
\begin{equation}\label{4-40}
  \mathbf{d} = \dot{\alpha}\left[
                        \begin{array}{cc}
                          -\gamma &      1 \\
                          1       & \gamma \\
                        \end{array}
                      \right],\quad\quad
\mathbf{w} = \dot{\alpha}\left[
                        \begin{array}{cc}
                          0  & 1  \\
                          -1 & 0  \\
                        \end{array}
                      \right].
\end{equation}
Using expression $\eqref{4-40}_1$ for the stretching tensor $\mathbf{d}$ and expressions \eqref{3-14} for the eigenprojections $\mathbf{V}_1^L$ and $\mathbf{V}_2^L$, from  $\eqref{4-36}_1$ we obtain the following expression for the tensor $\boldsymbol{\Psi}_g$:
\begin{equation}\label{4-41}
  \boldsymbol{\Psi}_g = \dot{\alpha}\,g_{12}\,\gamma\left[
                        \begin{array}{cc}
                          0       &      1 \\
                          -1      &      0 \\
                        \end{array}
                      \right].
\end{equation}
Using Eqs. $\eqref{2-34}_2$, $\eqref{4-40}_2$, and \eqref{4-41}, we obtain the following  expression for any spin tensor $\boldsymbol{\omega}$:
\begin{equation}\label{4-42}
  \boldsymbol{\omega} = \dot{\alpha}\,(1+g_{12}\,\gamma)\left[
                        \begin{array}{cc}
                          0       &      1 \\
                          -1      &      0 \\
                        \end{array}
                      \right].
\end{equation}

In view of expressions $\eqref{4-35}_2$, $\eqref{4-40}_1$, and \eqref{4-42},   constitutive relations $\eqref{4-34}_2$ can be rewritten in component form
\begin{alignat}{2}\label{4-43}
  \dot{\sigma}_{11} - 2 \dot{\alpha}\,(1 + g_{12}\,\gamma)\,\sigma_{12}  & = & - &2\mu\,\dot{\alpha}\,\gamma,  \\
  \dot{\sigma}_{22} + 2 \dot{\alpha}\,(1 + g_{12}\,\gamma)\,\sigma_{12}  &  = &  &2\mu\, \dot{\alpha}\,\gamma,  \notag \\
  \dot{\sigma}_{12} +  \dot{\alpha}\,(1 + g_{12}\,\gamma)\,(\sigma_{11} - \sigma_{22})  & = & &2\mu\, \dot{\alpha}. \notag
\end{alignat}
Setting $\sigma_{ij}=0$ ($i,\,j=1,2$) for $t=t_0$, from $\eqref{4-43}_{1,2}$ we obtain the  equality
\begin{equation}\label{4-44}
   \sigma_{22}=- \sigma_{11}
\end{equation}
Using this relation, the system of ODEs \eqref{4-43} is reduced to the following system of two first-order ODEs:
\begin{alignat}{2}\label{4-45}
  \dot{\sigma}_{11} - 2 \dot{\alpha}\,(1 + g_{12}\,\gamma)\,\sigma_{12} &  = & -&2\mu\, \dot{\alpha}\,\gamma,  \\
  \dot{\sigma}_{12} +  2\dot{\alpha}\,(1 + g_{12}\,\gamma)\,\sigma_{11} & =  &  &2\mu\, \dot{\alpha}. \notag
\end{alignat}
Considering the shear parameter $\alpha$ as a monotonically increasing deformation parameter $t$ (i.e., setting $\dot{\alpha}=1$), we rewrite the system of ODEs \eqref{4-45} in the standard form
\begin{equation}\label{4-46}
  \frac{d \mathbf{X}}{d \alpha} + \mathbf{A}(\alpha)\mathbf{X} = \mathbf{G}(\alpha)
\end{equation}
with the initial conditions
\begin{equation}\label{4-47}
  \mathbf{X}=\mathbf{0}\quad \text{at}\quad \alpha=0.
\end{equation}
Here we introduced the notation
\begin{equation}\label{4-48}
  \mathbf{X}\equiv \left[
                     \begin{array}{c}
                       \sigma_{11} \\
                       \sigma_{12} \\
                     \end{array}
                   \right],\quad
   \mathbf{A}(\alpha)\equiv 2k(\alpha)\left[
                      \begin{array}{cc}
                        0   & -1 \\
                        1   & 0           \\
                      \end{array}
                    \right],\quad
   \mathbf{G}(\alpha)\equiv 2\mu \left[
                     \begin{array}{c}
                       \gamma(\alpha) \\
                       1 \\
                     \end{array}
                   \right],\quad
   k(\alpha)\equiv 1 + g_{12}(\alpha)\gamma(\alpha).
\end{equation}

To solve the system of ODEs \eqref{4-46} with the initial conditions \eqref{4-47},  we need to specify the quantity $g_{12}(\alpha)$ for each of the hypoelastic models considered in this paper using expressions \eqref{2-37}--\eqref{2-40}:
\begin{itemize}
  \item for the Hypo-ZJ model based on the Zaremba--Jaumann stress rate,
\begin{equation}\label{4-49}
  g_{12}^{\text{ZJ}} = 0\quad \Rightarrow \quad k^{\text{ZJ}}(\alpha)=1,
\end{equation}
  \item for the Hypo-GN model based on the Green--Naghdi stress rate,
\begin{equation}\label{4-50}
  g_{12}^{\text{GN}} = - \tanh (\alpha)\quad \Rightarrow \quad k^{\text{GN}}(\alpha) = \cosh^{-1}(2 \alpha),
\end{equation}
  \item for the Hypo-GS model based on the Gurtin--Spear stress rate associated with the twirl tensor of Eulerian triad,
\begin{equation}\label{4-51}
    g_{12}^{\text{GS}} = -\coth (2 \alpha)\quad \Rightarrow \quad k^{\text{GS}}(\alpha) = 0,
\end{equation}
  \item for the Hypo-log model based on the logarithmic stress rate,
\begin{equation}\label{4-52}
    g_{12}^{\log} = \frac{1}{2 \alpha}\quad \Rightarrow \quad k^{\log}(\alpha) = \frac{1}{2 \alpha}\tanh (2 \alpha).
\end{equation}
\end{itemize}

For the Hypo-ZJ and Hypo-GN models, the system of ODEs \eqref{4-46} with the initial conditions \eqref{4-47} was solved using the Wolfram Mathematica system. Despite the apparent simplicity of system \eqref{4-46}, the obtained solutions are rather cumbersome and are therefore not presented here. Plots of $\sigma_{12}(\alpha)$ and $\sigma_{11}(\alpha)$ obtained for the Hypo-ZJ and Hypo-GN models under LFSS deformation are shown in Figs. \ref{f9} and \ref{f10}, respectively.
\begin{figure}
\begin{center}
\includegraphics{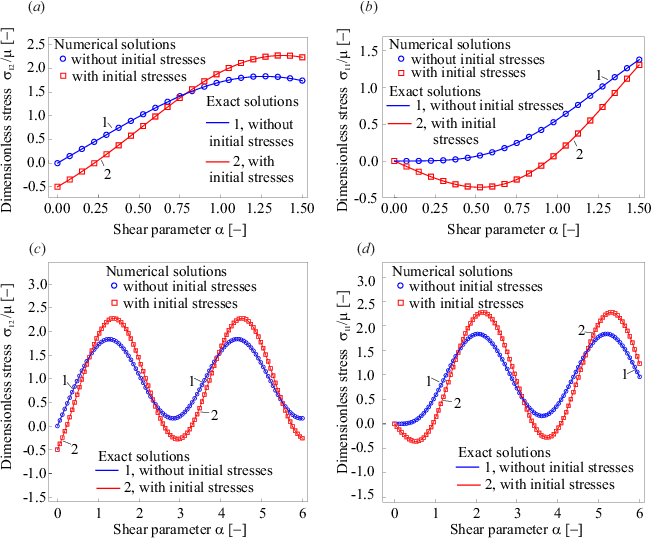}
\end{center}
\caption{Plots of the Cauchy stress tensor components $\sigma_{12}(\alpha)$ (\emph{a}, \emph{c}) and $\sigma_{11}(\alpha)$ (\emph{b}, \emph{d}) for the Hypo-ZJ model (see Table \ref{t5}) under LFSS deformation for the ranges of the shear parameter $\alpha$ [0,\,1.5] (\emph{a}, \emph{b}) and [0,\,6] (\emph{c}, \emph{d}).}
\label{f9}
\end{figure}
\begin{figure}
\begin{center}
\includegraphics{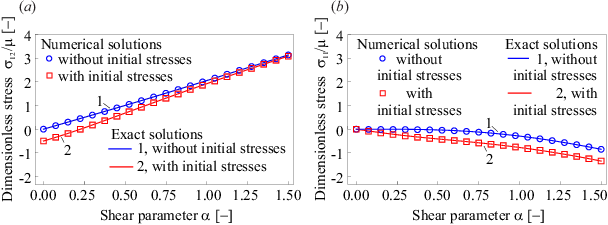}
\end{center}
\caption{Plots of the Cauchy stress tensor components $\sigma_{12}(\alpha)$ (\emph{a}) and $\sigma_{11}(\alpha)$ (\emph{b}) for the Hypo-GN model (see Table \ref{t5}) under LFSS deformation.}
\label{f10}
\end{figure}
Since the oscillatory behavior of the Cauchy stress tensor components for the Hypo-ZJ model is not sufficiently manifested in the standard range of the shear parameter $\alpha \in [0,\,1.5]$ (see Fig. \ref{f9},\emph{a,b}), the simulations were repeated over the wider range of this parameter $\alpha \in [0,\,6]$ (see Fig. \ref{f9},\emph{c,d}), where the mechanically meaningless oscillations of the Cauchy stress tensor components are already pronounced. Note that this type of oscillations under shear deformation is typical of this grade zero material model.\footnote{Of course, if instead of the model under consideration, based on the use of the linear dependence of the corotational Zaremba--Jaumann stress rate on the stretching tensor, i.e., $\frac{\text{D}^{\text{ZJ}}}{\text{D}t}[\boldsymbol{\tau}] = 2\mu\, \mathbf{d} + \lambda\, (\text{tr}\,\mathbf{d})\,\mathbf{I}$ we use a more complicated model in the form of a quasi-linear dependence of this corotational stress rate on the stretching tensor, i.e., $\frac{\text{D}^{\text{ZJ}}}{\text{D}t}[\boldsymbol{\tau}] = \mathbb{H}(\boldsymbol{\tau}):\mathbf{d}$ ($\mathbb{H}$ is some fourth-order tensor), for example, if these constitutive relations are the rate form of the hyperelasticity ones, then such spurious oscillations of stresses can be completely avoided.} In particular, similar oscillatory behavior of the stress rate components has been observed when solving a simple shear problem for the Hypo-ZJ (grade zero) model (see, e.g., \cite{BoulangerARMA2000,DestradeIJNLM2012,DienesAM1979,DienesAM1987,HorganJElast2010,GambirasioACME2016,LinIJNME2002,LinEJMAS2003,LinZAMM2003,LiuJElast1999,MihaiPRSA2011,MihaiIJNLM2013,Prager1961,SzaboIJSS1989,XiaoZAMM2006}).
At the same time, the Cauchy stress tensor components obtained for the Hypo-GN model under LFSS deformation, as in the solution of the simple shear problem (see, e.g., \cite{DienesAM1979,DienesAM1987}), do not exhibit any oscillations (see Fig. \ref{f10}).

For the hypoelastic model based on the Gurtin--Spear stress rate under LFSS deformation, it follows from \eqref{4-51} that the system of ODEs \eqref{4-45} splits into two uncoupled  equations for the quantities $\sigma_{12}(\alpha)$ and $\sigma_{11}(\alpha)$. The solutions of these equations with the initial conditions \eqref{4-47} are shown in Fig. \ref{f11}.
\begin{figure}
\begin{center}
\includegraphics{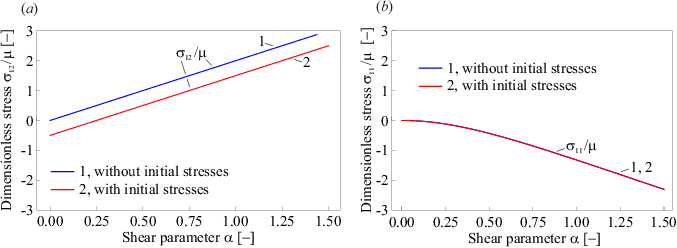}
\end{center}
\caption{Plots of the Cauchy stress tensor components $\sigma_{12}(\alpha)$ (\emph{a}) and $\sigma_{11}(\alpha)$ (\emph{b}) for the Hypo-GS model (see Table \ref{t5}) under LFSS deformation.}
\label{f11}
\end{figure}
It follows from the solution that the Cauchy stress tensor components obtained for the Hypo-GS model, under LFSS deformation, as in the solution of the same problem for the Hypo-GN model, do not exhibit  any oscillations.

Finding the Cauchy stress tensor components for the Hypo-log model with initial stresses \eqref{4-47} under LFSS deformation does not require the integration of the system of ODEs \eqref{4-46}, since in this case this Hooke-like (grade zero) hypoelastic model is equivalent to the Hencky isotropic hyperelastic material model \cite{XiaoAM1997,XiaoJElast1997,XiaoAM1999,XiaoJElast1999} and plots of the Cauchy stress tensor components  (without oscillations) obtained for this material model under LFSS deformation are already presented in Fig. \ref{f6}. Nevertheless, we also show these plots in Fig. \ref{f12}, since we will further compare the solutions of this problem with solutions obtained with non-zero initial stresses.
\begin{figure}
\begin{center}
\includegraphics{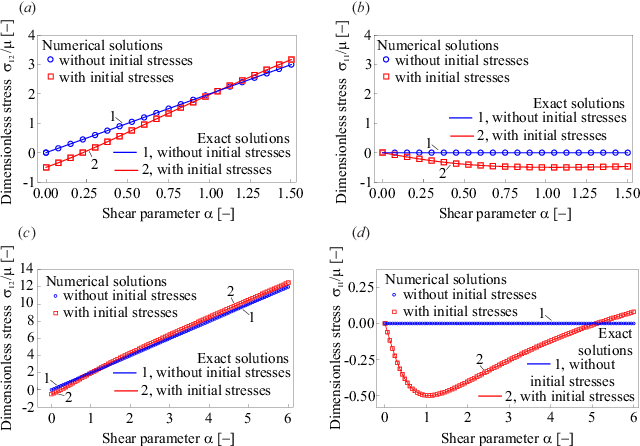}
\end{center}
\caption{Plots of the Cauchy stress tensor components $\sigma_{12}(\alpha)$ (\emph{a}, \emph{c}) and $\sigma_{11}(\alpha)$ (\emph{b}, \emph{d}) for the Hypo-log model (see Table \ref{t5}) under LFSS deformation for the ranges of the shear parameter $\alpha$ [0,\,1.5] (\emph{a}), (\emph{b}) and [0,\,6] (\emph{c}), (\emph{d}).}
\label{f12}
\end{figure}
It is known (cf., \cite{LiuJElast1999}, see also \cite{KorobeynikovZAMM2024}) that the Hypo-log model under simple shear deformation leads to the behavior of the Cauchy stress tensor components as a function of the shear parameter without oscillations in the case of zero initial stresses and with oscillations (mechanically meaningless) in the case non-zero initial stresses. The presence or absence of these features in the behavior of the Cauchy stress tensor components as a function of the shear parameter for the considered material model with non-zero initial stresses under LFSS deformation must be verified, and this will be done below.

We obtain expressions for the Cauchy stress tensor components for hypoelastic models based on the considered corotational stress rates under LFSS deformation in the presence of non-zero initial stresses. Following \cite{KorobeynikovZAMM2024}, we represent the solution of problem $\eqref{4-34}_2$ for the Cauchy stress tensor in the form
\begin{equation*}
  \boldsymbol{\sigma}=\boldsymbol{\sigma}_a + \boldsymbol{\sigma}_b,
\end{equation*}
where $\boldsymbol{\sigma}_a$ are the solutions of the homogeneous Cauchy problems with non-zero initial conditions
\begin{equation}\label{4-54}
    \boldsymbol{\sigma}^{\omega}_a= \mathbf{O},\quad\quad \boldsymbol{\sigma}_a=\boldsymbol{\sigma}^0\ \text{at}\ t=0,
\end{equation}
and $\boldsymbol{\sigma}_b$ are the solutions of the inhomogeneous Cauchy problems with zero initial conditions
\begin{equation}\label{4-55}
    \boldsymbol{\sigma}^{\omega}_b= 2\mu\, \mathbf{d},\quad\quad \boldsymbol{\sigma}_b=\mathbf{O}\ \text{at}\ t=0.
\end{equation}
The solutions of problems \eqref{4-55} for all hypoelastic models based on associated spin tensors considered in this paper were obtained above.

The solution of problem \eqref{4-54} for a hypoelastic model based on any spin tensor $\boldsymbol{\omega}$ can be written as \cite{KorobeynikovZAMM2024}
\begin{equation*}
  \boldsymbol{\sigma}_a=\mathbf{Q}_{\omega}\cdot\boldsymbol{\sigma}^0\cdot\mathbf{Q}_{\omega}^T,
\end{equation*}
where for 2D analysis, the rotation tensor $\mathbf{Q}_{\omega}\in \mathcal{T}^{2\,+}_\text{orth}$ can be written as
\begin{equation}\label{4-57}
\mathbf{Q}_{\omega}=
\left[
\begin{array}{cc}
 \cos\theta  & \sin\theta  \\
 -\sin\theta & \cos\theta
 \end{array}
\right].
\end{equation}
where $\theta$ is the rotation angle of the axes associated with the spin tensor $\boldsymbol{\omega}$.

In this paper, we adopt the following values for the initial stresses in 2D problems:
\begin{equation}\label{4-58}
    \sigma_{11}^0 = \sigma_{22}^0=0,\quad\quad\quad \sigma_{12}^0 = -\mu/2.
\end{equation}
Then the expressions presented in \cite{KorobeynikovZAMM2024} (p. 18) lead to the  following expressions for the components of the Cauchy stress tensor:
\begin{equation*}
  \sigma_{11}^a = \sigma_{12}^0\sin 2\theta,\quad\quad\quad \sigma_{22}^a = -\sigma_{12}^0\sin 2\theta,\quad\quad\quad \sigma_{12}^a = \sigma_{12}^0\cos 2\theta.
\end{equation*}
Note that, for initial stresses of the form \eqref{4-58}, equality \eqref{4-44} holds; therefore, below we present only solutions for the components $\sigma_{11}$ and $\sigma_{12}$ of the Cauchy stress tensor.

To complete the solution of the LFSS deformation problem, we need to express the angle $\theta$ in terms of the shear parameter $\alpha$ for each of the material models by solving the problem (see Eq. (31) in \cite{KorobeynikovZAMM2024})
\begin{equation}\label{4-60}
    \dot{\mathbf{Q}}_{\omega}= \boldsymbol{\omega}\cdot\mathbf{Q}_{\omega},\quad\quad\quad  \mathbf{Q}_{\omega}=\mathbf{I}\ \text{at}\ t=t_0.
\end{equation}
Substitution of \eqref{4-57} into $\eqref{4-60}_1$ leads to the following expression for the spin tensor $\boldsymbol{\omega}$
\begin{equation*}
  \boldsymbol{\omega} = \dot{\theta}\left[
                        \begin{array}{cc}
                          0       &      1 \\
                          -1      &      0 \\
                        \end{array}
                      \right].
\end{equation*}
On the other hand, the spin tensor $\boldsymbol{\omega}$ for LFSS deformation has the form \eqref{4-42}, which leads to the following equation for $\theta$:
\begin{equation}\label{4-62}
  \dot{\theta} = \dot{\alpha}[1+g_{12}(\alpha)\,\gamma(\alpha)].
\end{equation}
Setting $\alpha\equiv t-t_0$ and using the notation $\eqref{4-48}_4$, from \eqref{4-62} we obtain the Cauchy problem
\begin{equation*}
    \frac{d \theta}{d \alpha} = k(\alpha),\quad\quad  \theta=0\ \text{at}\ \alpha=0\ (\text{i.e.}, \text{at}\ t=t_0).
\end{equation*}

Using the expressions for the parameter $k(\alpha)$ in \eqref{4-49}--\eqref{4-52} for the  hypoelastic models considered, we obtain the following expressions for the angle $\theta$ for each of these models:
\begin{itemize}
  \item  For the Zaremba--Jaumann stress rate,
\begin{equation*}
  \theta(\alpha) = \alpha.
\end{equation*}
  \item For the Green--Naghdi stress rate (the expression is obtained using the Wolfram Mathematica system),
\begin{equation*}
  \theta(\alpha) = \text{arc}\tanh (\alpha),
\end{equation*}
i.e., $\theta=\theta^{\ast}$ (see Fig. \ref{f4},\emph{a}).
  \item For the Gurtin--Spear stress rate associated with the twirl tensor of Eulerian triad,
\begin{equation*}
    \theta(\alpha) = 0.
\end{equation*}
Since for this material model, the angle $\theta$ corresponds to the angle of rotation of the principal axes and these axes do not rotate, their rotation angle is zero.
  \item For the logarithmic stress rate, the angle $\theta$ is given by
\begin{equation*}
    \theta(\alpha) = \int_{0}^{\alpha} \frac{\tanh (2 \beta)}{2 \beta}\,d\beta.
\end{equation*}
Using the Wolfram Mathematica system, it was not possible to obtain an exact expression for this integral; therefore, the value of the angle $\theta$ for this material model will be obtained numerically using the same software.
\end{itemize}

The obtained solutions for the tested material models with initial stresses are presented in Figs. \ref{f9}--\ref{f12}. We see that the solution changes qualitatively (compared to the solution with zero initial stresses) only for the Hypo-log model, for which the presence of non-zero initial stresses leads to oscillations of the Cauchy stress tensor components (especially for the mechanically meaningless component $\sigma_{11}$). A similar qualitative change in the solution is observed for the simple shear problem in \cite{LiuJElast1999} (see also \cite{KorobeynikovZAMM2024}), where, for zero initial stresses, the Cauchy stress tensor components coincides with the solution of this problem for the Hencky isotropic hyperelastic material without oscillations, and the presence of non-zero initial stresses leads to mechanically meaningless oscillations of the Cauchy stress tensor components.

For greater reliability of the obtained solutions for the hypoelastic material models considered, we performed computer simulations of LFSS deformations for these materials using the homemade Pioner FE system \cite{Korobeinikov1989}, in which the Hooke-like Hypo-ZJ, Hypo-GN, and Hypo-log models were implemented. To integrate constitutive relations for any hypoelastic models, we used the weak incrementally objective algorithm of Rubinstein--Atluri with second-order accuracy \cite{RubinsteinCMAME1983} (see also \cite{Korobeynikov2023,KorobeynikovZAMM2024}). The obtained values of the Cauchy stress tensor components are represented by markers in Figs. \ref{f9}, \ref{f10}, and \ref{f12}. There is excellent agreement between the exact solutions and computer-simulated solutions.

Our next goal is to test Hooke-like isotropic hypoelastic models under RFSS deformation. Since this type of deformation is ``friendly'' to Lagrangian versions of constitutive relations, we further use Lagrangian version of ones for the Hooke-like isotropic hypoelastic models $\eqref{4-34}_1$ based on the corotational stress rates $\eqref{4-35}_1$ associated with the spin tensors $\boldsymbol{\Omega}$ determined from expressions $\eqref{2-31}_1$ and \eqref{2-32} adapted to the 2D analysis in the form
\begin{equation}\label{4-68}
    \boldsymbol{\Psi}_r(\mathbf{U},\widehat{\mathbf{D}}) = r_{12} (\mathbf{U}_1^R \cdot \widehat{\mathbf{D}} \cdot \mathbf{U}_2^R - \mathbf{U}_2^R \cdot \widehat{\mathbf{D}} \cdot \mathbf{U}_1^R),\quad\quad   r_{12} \equiv r(\lambda_1,\lambda_2),
\end{equation}
where the eigenprojections $\mathbf{U}_1^R$ and $\mathbf{U}_2^R$ are defined in \eqref{3-14}.\\

\begin{remark}
\label{rem:4-1}
The twirl tensor of Lagrangian triad is also included in the considered  family of material spin tensors $\boldsymbol{\Omega}$, since, for RFSS deformations, $\lambda_1\neq \lambda_2$ at $t>t_0$, which allows this spin tensor to be uniquely determined (see, e.g., \cite{KorobeynikovAM2011}).
\end{remark}

The Lagrangian stretching tensor is determined from expression $\eqref{2-18}_2$
\begin{equation}\label{4-69}
  \widehat{\mathbf{D}} = \frac{1}{2}(\dot{\mathbf{U}}_R \cdot \mathbf{U}^{-1}_R + \mathbf{U}^{-1}_R \cdot \dot{\mathbf{U}}_R).
\end{equation}
Here the tensors $\mathbf{U}_R$ and $\mathbf{U}^{-1}_R$ are determined from the expressions (see Eq. $\eqref{3-13}_2$)
\begin{equation}\label{4-70}
  \mathbf{U}_R = \lambda_1 \mathbf{U}_1^R + \lambda_2 \mathbf{U}_2^R,\quad\quad \mathbf{U}^{-1}_R = \lambda_1^{-1} \mathbf{U}_1^R + \lambda_2^{-1} \mathbf{U}_2^R,
\end{equation}
where $\lambda_1$ and $\lambda_2$ are defined in $\eqref{3-12}_{1,2}$. These expressions lead to
\begin{equation}\label{4-71}
  \dot{\lambda}_1 = \dot{\alpha}\mathrm{e}^{\alpha},\quad\quad \dot{\lambda}_2 = -\dot{\alpha}\mathrm{e}^{-\alpha},\quad\quad \lambda_1^{-1} = \mathrm{e}^{-\alpha},\quad\quad \lambda_2^{-1} = \mathrm{e}^{\alpha}.
\end{equation}
Since the eigenprojections $\mathbf{U}_1^R$ and $\mathbf{U}_2^R$ are independent on time $t$, the tensor $\dot{\mathbf{U}}_R$ is given by the expression
\begin{equation}\label{4-72}
  \dot{\mathbf{U}}_R = \dot{\lambda}_1 \mathbf{U}_1^R + \dot{\lambda}_2 \mathbf{U}_2^R.
\end{equation}
Using equalities \eqref{4-70}--\eqref{4-72}, from \eqref{4-69} we obtain the following  expression for the Lagrangian stretching tensor for RFSS deformations:
\begin{equation}\label{4-73}
  \widehat{\mathbf{D}} = \dot{\alpha}(\mathbf{U}_1^R - \mathbf{U}_2^R) = \dot{\alpha}\left[
                                                                             \begin{array}{cc}
                                                                               0 & 1 \\
                                                                               1 & 0 \\
                                                                             \end{array}
                                                                           \right].
\end{equation}
Using the expressions for $\mathbf{U}_1^R$ and $\mathbf{U}_2^R$ in \eqref{3-14} and the expression for the tensor $\widehat{\mathbf{D}}$ in \eqref{4-73}, we obtain the equalities
\begin{equation*}
  \mathbf{U}_1^R \cdot \widehat{\mathbf{D}} \cdot \mathbf{U}_2^R = \mathbf{U}_2^R \cdot \widehat{\mathbf{D}} \cdot \mathbf{U}_1^R = \left[
                                                                                                                  \begin{array}{cc}
                                                                                                                    0 & 0 \\
                                                                                                                    0 & 0 \\
                                                                                                                  \end{array}
                                                                                                                \right],
\end{equation*}
and from \eqref{4-68} we find that for any admissible $r_{12}$ and for any $\lambda_1,\,\lambda_1\in \mathds{R}^{+}$, the following equality holds:
\begin{equation*}
  \boldsymbol{\Psi}_r(\mathbf{U},\widehat{\mathbf{D}}) = \mathbf{O}\quad \Rightarrow\quad \boldsymbol{\Omega}= \mathbf{O}\quad \forall\ t>t_0.
\end{equation*}
Then it follows from $\eqref{4-35}_1$ that the family of all hypoelastic models with constitutive relations $\eqref{4-34}_1$ under RFSS deformation reduce to a single model with the constitutive relations
\begin{equation}\label{4-76}
    \dot{\bar{\boldsymbol{\sigma}}} = 2\mu\, \widehat{\mathbf{D}}.
\end{equation}

Writing both sides of expression \eqref{4-76} in matrix form and using expression \eqref{4-73}, we have
\begin{equation}\label{4-77}
  \left[
    \begin{array}{cc}
      \dot{\bar{\sigma}}_{11} & \dot{\bar{\sigma}}_{12} \\
      \dot{\bar{\sigma}}_{12} & \dot{\bar{\sigma}}_{22} \\
    \end{array}
  \right] = 2 \mu\, \dot{\alpha} \left[
                                               \begin{array}{cc}
                                                    0 & 1 \\
                                                    1 & 0 \\
                                               \end{array}
                                               \right].
\end{equation}
Assuming that, at the initial time $t_0$,
\begin{equation*}
    \bar{\sigma}_{11} = \bar{\sigma}_{22} = \bar{\sigma}_{12} = 0,
\end{equation*}
and setting $t-t_0=\alpha$, we obtain a solution of the system of ODEs \eqref{4-77} of the form
\begin{equation}\label{4-79}
    \bar{\sigma}_{11} = \bar{\sigma}_{22} = 0,\quad\quad \bar{\sigma}_{12} = 2\mu \alpha.
\end{equation}

We have shown that for all hypoelastic material models considered, RFSS deformations lead to Lagrangian pure shear stresses. Comparison of solution \eqref{4-79} with the solution obtained by testing the Hencky isotropic hyperelastic material model and presented in expressions \eqref{4-7} and \eqref{4-11} shows that they are identical. This agrees with the fact that solution \eqref{4-79} is independent of the deformation path, and, in the family of Hooke-like hypoelastic models based on corotational stress rates, the only model whose constitutive relations are the rate form of ones for hyperelasticity is the Hooke-like (grade zero) model based on the logarithmic stress rate (the Hypo-log model), whose constitutive relations are the rate form of ones for the Hencky hyperelastic material model provided that the reference configuration is the stress free one \cite{XiaoJElast1997,XiaoAM1999,XiaoJElast1999}. Since, for the Hencky model under RFSS deformation, the Cauchy stresses are given by \eqref{4-13} subject to \eqref{4-7}, it follows that for all hypoelastic models considered here, the Cauchy stress tensor for RFSS deformations is determined from the indicated expressions.

Note that our method of determining the components of the Cauchy stress tensor for the considered hypoelastic models under RFSS deformation using Lagrangian versions of constitutive relations is much simpler than the method proposed by Lin \cite{LinZAMP2024}. Lin used Eulerian versions of constitutive relations and, in addition, integrated ones separately for each material model (based on the Zaremba--Jaumann, Green--Naghdi, Gurtin--Spears, and logarithmic stress rates), whereas we have shown that constitutive relations for all material models under this deformation reduce to the rate form of ones for the Hencky material model. A direct comparison of our solutions for this type of constitutive relations with the solutions obtained by Lin is difficult, since we use the parameter $\alpha$ (as it is denoted in \cite{ThielIJNLM2019}) as the shear parameter, whereas in \cite{LinZAMP2024} hypoelastic models are tested only under RFSS deformation, and the shear parameter is taken to be the angle $\theta \equiv \arctan (\sinh (2\alpha))$, which in \cite{LinZAMP2024} is denoted  by $\alpha$. However, it can be shown that the solution for the Cauchy stress tensor components presented in Eq. (26) by Lin \cite{LinZAMP2024} coincides with our solution \eqref{4-13} and the solution for the rotated Cauchy stress tensor $\bar{\boldsymbol{\sigma}}$ in expression (28) obtained by Lin \cite{LinZAMP2024} coincides with our solution \eqref{4-79}.

\section{Conclusions}
\label{sec:8}

We have tested some Hooke-like isotropic hyper-/hypo-elastic material models under finite simple shear deformations and have shown the following:
\begin{enumerate}
  \item the components of the Cauchy stress tensor for any isotropic Cauchy/Green elastic material under left finite simple shear (LFSS) deformation are equal to the components of the rotated Cauchy stress tensor for the same material under right finite simple shear (RFSS) deformation;
  \item for any HLIH material model based on a symmetrically physical (SP) strain function, LFSS and RFSS deformations lead to Eulerian and Lagrangian pure shear stresses, respectively;
  \item for any two-exponential Ogden's isotropic hyperelastic material model based on a SP strain function, LFSS and RFSS deformations lead to Eulerian and Lagrangian pure shear stresses, respectively;
  \item for some isotropic Hooke-like hypoelastic materials with constitutive relations based on corotational stress rates under LFSS deformation, the evolution of the Cauchy stress tensor components
 as a function of the shear parameter is qualitatively similar to that for the same materials under simple shear deformation.
\end{enumerate}

In addition, we have confirmed the results of Lin \cite{LinZAMP2024} that the Cauchy stress tensor components for isotropic Hooke-like hypoelastic materials with constitutive relations based on any corotational stress rates without initial stresses under RFSS deformation coincide with those for the Hencky isotropic hyperelastic material.

\renewcommand{\theequation}{A.\arabic{equation}}
\setcounter{equation}{0}

\appendix

\section{Constitutive relations for the compressible isotropic hyperelastic Mooney-Rivlin material model}
\label{sec:A}

The incompressible isotropic hyperelastic Mooney-Rivlin material model can be considered as a special case of the incompressible isotropic hyperelastic Ogden model \cite{OgdenPRSLA1972a}, namely, the two-power Ogden model with elastic energy with terms corresponding to the exponents $n=\pm 2$. Ogden \cite{OgdenPRSLA1972b} generalized its model to account for material compressibility. For this, Ogden represented elastic energy in mixed form.\footnote{The mixed form of elastic energy is the sum of a purely volumetric part and a mixed (volumetric and isochoric) component; an alternative representation of volumetric energy is the vol-iso form with splitting into purely volumetric and isochoric parts \cite{Korobeynikov2026}.} Using this approach to formulate the elastic energy for compressible isotropic hyperelastic Mooney-Rivlin materials, we find, following \cite{KorobeynikovAAM2023}, that the potential energy for this material model is given by\footnote{In \eqref{A-1}, the first two terms on the right-hand side correspond to the mixed type of potential energy and the last term $h(J)\equiv \lambda (\log J)^2/2$ corresponds to the potential energy of pure volumetric deformation which is, however, not convex in $J$. Following \cite{OgdenPRSLA1972b} (see also \cite{HartmannIJSS2003,Korobeynikov2026}), we can use other expressions for this potential energy to satisfy some constraints on the  function $h(J)$.}
\begin{align}\label{A-1}
  W_{\text{MR}} \equiv {}& \mu_1(\text{tr}\,\mathbf{e}^{(2)}- \log J) - \mu_2(\text{tr}\,\mathbf{e}^{(-2)}-\log J) + \frac{\lambda}{2}(\log J)^2 \\
  = {}&  \frac{\mu_1}{2}\, (\|F\|^2 - 3 - 2\log J) + \frac{\mu_2}{2}\,(\|F^{-1}\|^2 - 3 + 2\log J) + \frac{\lambda}{2}(\log J)^2. \notag
\end{align}
Hereinafter, $\mu_1\geq 0$ and $\mu_2\geq 0$ are model parameters that satisfy the equality \linebreak $\mu_1+\mu_2=\mu$, $\lambda$ and $\mu$ are the Lam\'{e} parameters, and $\textbf{e}^{(2)} = (\mathbf{c}-\mathbf{I})/2$ and \linebreak $\textbf{e}^{(-2)} = (\mathbf{I}-\mathbf{c}^{-1})/2$ are the Finger and Almansi strain tensors (see Table \ref{t1}).

Since, for isotropic Cauchy/Green elastic materials, the Kirchhoff stress tensor $\boldsymbol{\tau}$ is coaxial with the left stretch tensor $\mathbf{V}$ (see, e.g., \cite{Ogden1984}), the tensor $\boldsymbol{\tau}$ can be represented as ($m$ is the eigenindex of the tensor $\mathbf{V}$)
\begin{equation}\label{A-2}
    \boldsymbol{\tau}=\sum^m_{i=1}\tau_i \mathbf{V}_i,
\end{equation}
where $\mathbf{V}_i$ ($i=1,\ldots ,m$) are the eigenprojections of the tensor $\mathbf{V}$. The components $\tau_i$ ($i=1,\ldots ,m$) can be found from the expressions (see, e.g., \cite{KorobeynikovJElast2019}, Eq. $(40)_2$)
\begin{equation}\label{A-3}
    \boldsymbol{\tau}=\sum^m_{i=1}\lambda_i\frac{\partial W_{\text{MR}}}{\partial \lambda_i} \mathbf{V}_i,
\end{equation}
where $\lambda_i$ ($i=1,\ldots ,m$) are the principal stretches. Basis-free expressions for the tensor $\boldsymbol{\tau}$ can be derived from \eqref{A-1}-\eqref{A-3}:
\begin{align}\label{A-4}
  \boldsymbol{\tau} = {}& 2 \mu_1\,\mathbf{e}^{(2)} + 2 \mu_2\,\mathbf{e}^{(-2)} + \lambda\,(\log J)\,\mathbf{I}\quad \Leftrightarrow \\
  \boldsymbol{\tau} = {}& \mu_1\,(\mathbf{c}-\mathbf{I}) + \mu_2\,(\mathbf{I}-\mathbf{c}^{-1}) + \lambda\,(\log J)\,\mathbf{I}. \notag
\end{align}

To obtain explicit expressions for the elasticity tensors, we define the \emph{dyadic} $\mathbf{A}\otimes\mathbf{H}$ and \emph{symmetric} $\mathbf{A}\!\overset{\text{sym}}{\otimes}\!\mathbf{H}$ tensor products for the second-order tensors $\mathbf{A}$ and $\mathbf{H}$ according to the definition given in \cite{Curnier1994}. Let $\mathbf{A}$, $\mathbf{B}$, and $\mathbf{X}$ be second-order tensors; then the following identities hold:
\begin{equation*}
     (\mathbf{A}\otimes\mathbf{B}):\mathbf{X} = \mathbf{A}\,(\mathbf{B}:\mathbf{X}),\quad
    (\mathbf{A}\!\overset{\text{sym}}{\otimes}\!\mathbf{B}):\mathbf{X} = \mathbf{A}\cdot \text{sym}\,\mathbf{X}\cdot\mathbf{B}^T,\quad
    \text{sym}\,\mathbf{X}\equiv (\mathbf{X}+\mathbf{X}^T)/2.
\end{equation*}

According to the method of obtaining objective rate forms of constitutive relations for the isotropic hyperelastic material models developed \cite{KorobeynikovAAM2025}, the rate form for the material model considered can be written as
\begin{equation}\label{A-6}
  \boldsymbol{\tau}^{\text{ZJ}} = \mathbf{d}\cdot (\mu_1\,\mathbf{c} + \mu_2\,\mathbf{c}^{-1}) + (\mu_1\,\mathbf{c} + \mu_2\,\mathbf{c}^{-1})\cdot \mathbf{d} + \lambda\,(\text{tr}\,\mathbf{d})\,\mathbf{I},
\end{equation}
or in alternative form
\begin{equation}\label{A-7}
  \boldsymbol{\tau}^{\text{ZJ}} = \mathbb{C}^{\text{ZJ}}_{\text{MR}}:\mathbf{d},
\end{equation}
where $\mathbb{C}^{\text{ZJ}}_{\text{MR}}$ is the fourth-order \emph{elasticity tensor} of the form
\begin{equation*}
  \mathbb{C}^{\text{ZJ}}_{\text{MR}}\equiv \mathbf{I}\!\overset{\text{sym}}{\otimes}\!(\mu_1\,\mathbf{c} + \mu_2\,\mathbf{c}^{-1}) + (\mu_1\,\mathbf{c} + \mu_2\,\mathbf{c}^{-1})\!\overset{\text{sym}}{\otimes}\!\mathbf{I} + \lambda\,\mathbf{I}\otimes \mathbf{I},
\end{equation*}
which for hyperelastic materials has full symmetry (or super-symmetry) \cite{FedericoMMS2025}.

We implemented the compressible isotropic hyperelastic Mooney-Rivlin material model with constitutive relations \eqref{A-4} and the super-symmetric elasticity tensor
\begin{equation*}
  \mathbb{C}^{\text{BH}}_{\text{MR}}\equiv \frac{1}{J}\mathbb{C}^{\text{ZJ}}_{\text{MR}}
\end{equation*}
in the homemade Pioner nonlinear FE system \cite{Korobeinikov1989} using an updated Lagrangian formulation based on the rate form of constitutive relations
\begin{equation*}
  \boldsymbol{\sigma}^{\text{BH}} = \mathbb{C}^{\text{BH}}_{\text{MR}}:\mathbf{d},
\end{equation*}
where $\boldsymbol{\sigma}^{\text{BH}}$ is the \emph{Biezeno--Hencky stress rate} \cite{Biezeno1928}
\begin{equation*}
  \boldsymbol{\sigma}^{\text{BH}} \equiv \boldsymbol{\sigma}^{\text{ZJ}} + \boldsymbol{\sigma} (\text{tr}\,\mathbf{d})
\end{equation*}
related to the stress rate $\boldsymbol{\tau}^{\text{ZJ}}$ as follows (see, e.g., \cite{Korobeynikov2026}):
\begin{equation*}
  \boldsymbol{\tau}^{\text{ZJ}} = J\,\boldsymbol{\sigma}^{\text{BH}}.
\end{equation*}

Next, we consider three well-known hyperelasticity models which are special cases of the compressible isotropic hyperelastic Mooney-Rivlin material model.\\

(i) The compressible mixed-type \emph{neo-Hookean} \cite{Korobeynikov2026} (or \emph{Simo--Pister} \cite{SimoCMAME1984}) material model with the following constraints on the model parameters: $\mu_1=\mu$ and $\mu_2=0$. In our classification of material models (see Table \ref{t3}), this model is called the \emph{Ogden-A isotropic compressible hyperelastic material model}. Constitutive relations for this material model are obtained from $\eqref{A-4}_2$:
\begin{equation}\label{A-13}
  \boldsymbol{\tau} = \mu\,(\mathbf{c}-\mathbf{I}) + \lambda\,(\log J)\,\mathbf{I}\ = \mu\,(B - I) + \lambda\,(\log J)\,I.
\end{equation}
Using the relationship between the objective Zaremba--Jaumann and upper Oldroyd stress rates
\begin{equation*}
  \boldsymbol{\tau}^{\text{ZJ}} = \boldsymbol{\tau}^{\sharp} + \boldsymbol{\tau}\cdot \mathbf{d} + \mathbf{d}\cdot \boldsymbol{\tau},
\end{equation*}
and the constitutive relations \eqref{A-13}, we obtain the following rate form of constitutive relations for the material model considered, which is alternative to \eqref{A-6}:
\begin{equation*}
  \boldsymbol{\tau}^{\sharp} = 2(\mu - \lambda\,\log J)\mathbf{d} + \lambda\,(\text{tr}\,\mathbf{d})\,\mathbf{I}.
\end{equation*}
This formulation is consistent with the rate form of constitutive relations (60) in \cite{KorobeynikovAAM2023} (for $n=2$) and (7.4) in \cite{Korobeynikov2026}.\\

(ii) The compressible mixed-type isotropic hyperelastic material model with the following constraints on the model parameters: $\mu_1=0$ and $\mu_2=\mu$. In our classification of material models (see Table \ref{t3}), this model is called the \emph{Ogden-B isotropic compressible hyperelastic material model}. Constitutive relations for this material model are derived from $\eqref{A-4}_2$:
\begin{equation}\label{A-16}
  \boldsymbol{\tau} = \mu\,(\mathbf{I}-\mathbf{c}^{-1}) + \lambda\,(\log J)\,\mathbf{I}\ =\mu\,(I - B^{-1}) + \lambda\,(\log J)\,I.
\end{equation}
Using the relationship between the objective Zaremba--Jaumann and lower Oldroyd stress rates
\begin{equation*}
  \boldsymbol{\tau}^{\text{ZJ}} = \boldsymbol{\tau}^{\flat} - \boldsymbol{\tau}\cdot \mathbf{d} - \mathbf{d}\cdot \boldsymbol{\tau},
\end{equation*}
and the constitutive relations \eqref{A-16}, we obtain the following rate form of constitutive relations for the material model considered, which is alternative to \eqref{A-6}:
\begin{equation*}
  \boldsymbol{\tau}^{\flat} = 2(\mu + \lambda\,\log J)\mathbf{d} + \lambda\,(\text{tr}\,\mathbf{d})\,\mathbf{I}.
\end{equation*}
This formulation is consistent with the rate form of constitutive relations (60) in \cite{KorobeynikovAAM2023} (for $n=-2$).\\

(iii) The compressible mixed-type isotropic hyperelastic material model with the following constraints on the model parameters: $\mu_1=\mu_2=\mu/2$. In our classification of material models (see Table \ref{t4}), this model is called the \emph{OBI-M isotropic compressible hyperelastic material model}. Constitutive relations for this material model are derived from $\eqref{A-4}_2$
\begin{align}\label{A-19}
  \boldsymbol{\tau} = \frac{\mu}{2}\,(\mathbf{c}-\mathbf{c}^{-1}) + \lambda\,(\log J)\,\mathbf{I}\ = {} & \frac{\mu}{2}\,(B - B^{-1}) + \lambda\,(\log J)\,I \\
   = {} &\frac{\mu}{2}\,(B - B^{-1}) + \frac{\lambda}{2}\,(\log\, \det B)\,I. \notag
\end{align}
For this material model, the rate form of constitutive relations \eqref{A-6} reduces to
\begin{equation}\label{A-20}
  \boldsymbol{\tau}^{\text{ZJ}} = \frac{\mu}{2}\,\mathbf{d}\cdot (\mathbf{c} + \mathbf{c}^{-1}) + \frac{\mu}{2}\,(\mathbf{c} + \mathbf{c}^{-1})\cdot \mathbf{d} + \lambda\,(\text{tr}\,\mathbf{d})\,\mathbf{I},
\end{equation}
or to formula \eqref{A-7}, which is alternative to  formula \eqref{A-20}, with the super-symmetric fourth-order elasticity tensor
\begin{equation*}
  \mathbb{C}^{\text{ZJ}}_{\text{MR}} = \frac{\mu}{2}\,\mathbf{I}\!\overset{\text{sym}}{\otimes}\!(\mathbf{c} + \mathbf{c}^{-1}) + \frac{\mu}{2}\,(\mathbf{c} + \mathbf{c}^{-1})\!\overset{\text{sym}}{\otimes}\!\mathbf{I} + \lambda\,\mathbf{I}\otimes \mathbf{I}.
\end{equation*}
Note that expressions \eqref{A-19} and \eqref{A-20} for constitutive relations for the hyperelastic material model considered here differ from expressions (2.80) and (2.81) in \cite{NeffJNS2026} for constitutive relations for the Cauchy elastic material model considered in that paper in that they use the Cauchy stress tensor $\boldsymbol{\sigma}$ instead of the Kirchhoff one $\boldsymbol{\tau}$ in relation \eqref{A-19}.

Based on \eqref{A-19}, we can give an alternative derivation of the rate form \eqref{A-20}. It holds using a chain rule property of corotational rates together with the gradient of isotropic tensor function \cite{NeffAM2025}
\begin{equation*}
  \frac{\mathrm{D}^{\mathrm{ZJ}}}{\mathrm{D}\,t}[\tau] = \mathrm{D}_B\,\tau(B).\frac{\mathrm{D}^{\mathrm{ZJ}}}{\mathrm{D}\,t}[B]\quad (\Leftrightarrow\quad \boldsymbol{\tau}^{\text{ZJ}} = \frac{\partial\,\boldsymbol{\tau}(\mathbf{c})}{\partial\,\mathbf{c}}:\mathbf{c}^{\text{ZJ}}).
\end{equation*}
The following equality holds (see Eq. (1.7) in \cite{NeffAM2025}):
\begin{equation*}
  \frac{\mathrm{D}^{\mathrm{ZJ}}}{\mathrm{D}\,t}[B] = B D + D B \quad (\Leftrightarrow\quad \mathbf{c}^{\text{ZJ}} = \mathbf{c}\cdot \mathbf{d} + \mathbf{d}\cdot \mathbf{c}).
\end{equation*}
Let $H\in \mathcal{T}^2$ be any tensor, then we have for the tensor $\tau$ from \eqref{A-19} the following expression (see Eq. (3.11) in \cite{NeffJMPS2025})
\begin{align*}
  \mathrm{D}_B\,\tau(B).H = {} & \frac{\mu}{2}\,(H + B^{-1}H B^{-1}) + \frac{\lambda}{2} \langle B^{-1},H \rangle I \\
  [\Leftrightarrow\quad\frac{\partial\,\boldsymbol{\tau}(\mathbf{c})}{\partial\,\mathbf{c}}:\mathbf{H}={}&\frac{\mu}{2}\,(\mathbf{H} +\mathbf{c}^{-1}\cdot\mathbf{H}\cdot\mathbf{c}^{-1}) + \frac{\lambda}{2}(\mathbf{c}^{-1}:\mathbf{H})\mathbf{I}].\notag
\end{align*}
Using this expression with the identification $H = BD + DB$, we have the final equality (see Eq. (3.12) in \cite{NeffJMPS2025})
\begin{equation*}
  \frac{\mathrm{D}^{\mathrm{ZJ}}}{\mathrm{D}\,t}[\tau] = \frac{\mu}{2}\,(BD + DB + B^{-1}D + D B^{-1}) + \lambda\,(\text{tr}\,D)\,I,
\end{equation*}
which is equivalent to Eq. \eqref{A-20}.

\footnotesize
\bibliography{KLN_final_07_02_2026}

\end{document}